\theoremstyle{plain}
\numberwithin{equation}{section}
\theoremstyle{definition}
\newtheorem{definition}{Definition}[section]
\newtheorem{example}[definition]{Example}
\theoremstyle{remark}
\theoremstyle{plain}
\newtheorem{theorem}[definition]{Theorem}
\newtheorem{lemma}[definition]{Lemma}
\newtheorem{corollary}[definition]{Corollary}
\begin{document}

 \def\AW[#1]^#2_#3{\ar@{-^>}@<.5ex>[#1]^{#2} \ar@{_<-}@<-.5ex>[#1]_{#3}}
 \def\NAW[#1]{\ar@{-^>}@<.5ex>[#1] \ar@{_<-}@<-.5ex>[#1]}

\xymatrixrowsep{1.5pc} \xymatrixcolsep{1.5pc}

 \renewcommand{\O}{\bigcirc}
 \newcommand{\OX}{\bigotimes}
 \newcommand{\OD}{\bigodot}
 \newcommand{\OV}{\O\llap{v\hspace{.6ex}}}
 \newcommand{\B}{\mbox{\Huge $\bullet$}}
  \newcommand{\D}{$\diamond$}

\title{Classification of finite-growth contragredient Lie superalgebras}
\author{Crystal Hoyt\footnote{Nara Women's University, Japan; Weizmann Institute of Science, Israel; crystal.hoyt@weizmann.ac.il}\\ (joint work with Vera Serganova)}
\date{November 17, 2009}

\maketitle

\begin{abstract}
A contragredient Lie superalgebra is a superalgebra defined by a Cartan matrix.  In general, a contragredient Lie superalgebra is not finite dimensional, however it has a natural $\mathbb{Z}$-grading by finite dimensional components.
A contragredient Lie superalgebra has finite growth if the dimensions of these graded components depend polynomially on the degree.  We discuss the classification of finite-growth contragredient Lie superalgebras.
\end{abstract}

\setcounter{section}{-1}
\section{Introduction}\ \indent
A Lie superalgebra is a generalization of a Lie algebra, and a contragredient Lie superalgebra is a superalgebra defined by a Cartan matrix \cite{K77, K90}.  In 1977, V.G. Kac classified the simple finite-dimensional contragredient Lie superalgebras~\cite{K77}. In 1978,  V.G. Kac classified the finite-growth contragredient Lie superalgebras which satisfy the condition that there are no zeros on the main diagonal of the Cartan matrix~\cite{K78}.  In 1986, J.W. van de Leur classified the finite-growth contragredient Lie superalgebras which have symmetrizable Cartan matrices \cite{L86,L89}.  We complete the classification of finite-growth contragredient Lie superalgebras in \cite{HS07}, imposing no conditions.  Our list contains previously known examples \cite{KL89, S84} and some new superalgebras, however these new superalgebras are not simple.

{\def\thedefinition{\ref{171}} \addtocounter{definition}{-1}
\begin{theorem}[Hoyt, Serganova \cite{HS07}]  Let $\mathfrak{ g} (A)$ be a contragredient Lie superalgebra of finite growth and suppose the matrix $A$ is indecomposable with no zero rows.  Then either $A$ is symmetrizable and $\mathfrak{g}(A)$ is isomorphic to an affine or finite-dimensional Lie superalgebra classified in \cite{L86,L89},  or it is $D(2,1,0)$, $\widehat{D}(2,1,0)$, $S(1,2,\alpha)$ or $q(n)^{(2)}$.  See Table 1 and Table 3.
\end{theorem}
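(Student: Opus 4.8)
The plan is to induct on the rank $n$ of the Cartan matrix $A$, exploiting the fact that finite growth passes to sub-configurations: if $A'$ is a principal submatrix of $A$, obtained by deleting some indices, then every indecomposable component of $A'$ again defines a contragredient Lie superalgebra of finite growth (its graded components sit inside those of $\mathfrak{g}(A)$). For $n = 1$ a direct check shows $\mathfrak{g}(A)$ is $\mathfrak{sl}(2)$, $\mathfrak{osp}(1|2)$, a Heisenberg superalgebra, or a $3$-dimensional solvable superalgebra --- all finite-dimensional and symmetrizable. For $n = 2$ one lists the $2 \times 2$ Cartan matrices and reads off the growth from the first few graded components computed via the Serre-type relations; the delicate case is an isotropic diagonal entry $a_{ii} = 0$ (forcing an odd simple root), but the finite-growth outcomes are again only the finite-dimensional rank-$2$ superalgebras and the rank-$2$ affine ones, all symmetrizable. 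A constant companion throughout is the operation of \emph{odd reflection}: at an isotropic node $i$ the superalgebra $\mathfrak{g}(A)$ is unchanged up to isomorphism when $A$ is replaced by the reflected matrix $r_i(A)$, so one may always pass to any Cartan matrix in the odd-reflection class of $A$.

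Now let $n \geq 3$. If $A$ is symmetrizable, the theorem of van de Leur \cite{L86,L89} applies and $\mathfrak{g}(A)$ is finite-dimensional or affine, as required. So suppose $A$ is not symmetrizable. Symmetrizability is a combinatorial condition on the Dynkin diagram --- it fails exactly when some cycle $i_1, \dots, i_k = i_1$ has $\prod_j a_{i_j i_{j+1}} \neq \prod_j a_{i_{j+1} i_j}$; in particular, a tree-shaped diagram always gives a symmetrizable matrix. Hence the Dynkin diagram of $A$ must contain such a ``bad'' cycle $C$. On the other hand, by the inductive hypothesis and the base cases every indecomposable component of every proper sub-configuration of $A$ lies on our list, which already restricts very sharply which diagrams can carry the cycle $C$.

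The core of the proof is to extract from these constraints that $A$ must be the Cartan matrix of one of $D(2,1,0)$, $\widehat{D}(2,1,0)$, $S(1,2,\alpha)$ or $q(n)^{(2)}$. A cycle through only non-isotropic nodes is, after rescaling, an affine diagram of type $A_\ell^{(1)}$ and hence symmetrizable --- or else it produces super-polynomial growth --- so $C$ must run through isotropic odd nodes, and one studies the admissible local configurations at such nodes. One shows that the only bad cycle compatible with finite growth is a triangle of three pairwise non-orthogonal isotropic nodes, that it admits essentially no enlargement beyond the affinization giving $\widehat{D}(2,1,0)$, and that it otherwise sits inside the specific diagrams underlying $S(1,2,\alpha)$ and $q(n)^{(2)}$. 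The difficulty here is that $A$ is not symmetrizable, so the standard devices --- the Weyl group, an invariant form, the Casimir --- are not available to bound root multiplicities; one must control the graded components directly from the defining relations, using odd reflections repeatedly to bring the cycle into a position where the computation is feasible. I expect this elimination of all ``near misses'' to be the main obstacle.

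Finally, for each of the four exceptional families one produces a concrete model --- $D(2,1,0)$ and $\widehat{D}(2,1,0)$ as the (non-simple) finite-dimensional, respectively affine, superalgebras obtained by specializing $D(2,1,\alpha)$, the superalgebra $S(1,2,\alpha)$ of Cartan type, and $q(n)^{(2)}$ as a twisted loop superalgebra of $q(n)$ --- and checks directly that its $\mathbb{Z}$-graded components grow polynomially. Together with the symmetrizable case this exhausts all possibilities and establishes the classification; the entries of Tables 1 and 3 then record the Cartan matrices and the corresponding superalgebras.
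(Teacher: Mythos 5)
Your overall architecture (induction on the number of simple roots, reduction to van de Leur's classification in the symmetrizable case, heavy use of odd reflections, and the fact that finite growth passes to the subalgebras $\mathfrak{g}(A_J)$ of Lemma~\ref{lm21}) is broadly consistent with the paper's. But there are two genuine gaps. First, your criterion for non-symmetrizability is wrong for the matrices that actually occur: a matrix with $a_{ij}=0$ but $a_{ji}\neq 0$ (a \emph{singular} isotropic simple root) is non-symmetrizable even when its diagram is a tree, and this is exactly how $D(2,1,0)$, $\widehat{D}(2,1,0)$ and $S(1,2,0)$ arise --- the diagram of $D(2,1,0)$ in Table 3 is a path, yet the algebra is non-symmetrizable and on the exceptional list. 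Your ``bad cycle'' dichotomy would misfile these cases as symmetrizable and look for them in van de Leur's list, where they do not appear. The paper instead splits the proof along the regular/singular axis: either no reflected base has a singular isotropic root (the regular Kac--Moody case), or some base does (the singular case, treated by a separate theorem).

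Second, you identify the elimination of ``near misses'' for non-symmetrizable $A$ as the main obstacle, but the only tool you offer --- controlling the graded components directly from the defining relations --- is precisely what is infeasible without an invariant form. The paper's crucial input here is Theorem~\ref{th1} together with Lemma~\ref{lm23} and Theorem~\ref{admiss}: finite growth forces integrability, hence strong integrality and sign conditions on $A$ \emph{and on every matrix obtained from $A$ by a sequence of odd reflections} (admissibility). In the regular case, completeness is then obtained not by bounding root multiplicities but by two mechanisms you do not mention: (i) the principal-root construction (Lemma~\ref{lm4}, Corollary~\ref{cor1}), which produces an even generalized Cartan matrix $B$ from any finite set of principal roots and is what rules out $Q^{\pm}(m,n,t)$; and (ii) the theory of integrable highest weight modules (Lemma~\ref{lm411}, Lemma~\ref{M}, Corollary~\ref{lm123}, resting on the Kac--Wakimoto results), which shows that a regular Kac--Moody diagram whose algebra admits no nontrivial irreducible integrable highest weight module cannot be a proper subdiagram of a larger one. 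Without these, your induction step has no way to terminate.
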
}

The case where the matrix $A$ has a zero row is also handled in \cite{HS07}. In this case, the Lie superalgebra $\mathfrak{g}(A)$ is not simple and is basically obtained by extending a finite dimensional algebra by a Heisenberg algebra. A characterization of these Cartan matrices is given in \cite{HS07}.

Unlike the usual Kac-Moody algebra situation, there exist non-symmetrizable families of finite-growth contragredient Lie superalgebras, and one of these families, $S(1,2,\alpha)$, can not be realized as a twisted affinization, whereas another family, $q(n)^{(2)}$, is a twisted affinization of a finite dimensional Lie superalgebra which is not contragredient.  The Lie superalgebra $S(1,2,\alpha)$ appears in the list of conformal superalgebras given in \cite{KL89}.

The main idea of this classification is to use odd reflections  \cite{LSS86} to relate the properties of different bases for the Lie superalgebra g(A). In contrast to the Lie algebra case where the Cartan matrix is unique, a Lie superalgebra usually has more than one Cartan matrix.

It is shown in \cite{HS07} that if $\mathfrak{g}(A)$ is a finite-growth contragredient Lie superalgebra and the Cartan matrix $A$ has no zero rows, then simple root vectors of $\mathfrak{g}(A)$ act locally nilpotently on the adjoint module.  This implies certain conditions on $A$ which are only slightly weaker than the conditions for the matrix to be a generalized Cartan matrix. The crucial point is that for a finite-growth Lie superalgebra, these matrix conditions should still hold after odd reflections.

This leads to the definition of a regular Kac-Moody superalgebra: a contragredient Lie superalgebra all of whose Cartan matrices obtained by odd reflections are generalized Cartan matrices.  Equivalently, these are the contragredient Lie superalgebras for which all real root vectors act locally nilpotently on the adjoint module and all real isotropic roots are regular.  Remarkably, this is a finite list of families.

By comparing the classification of regular Kac-Moody superalgebras to the classification of
symmetrizable finite-growth contragredient Lie superalgebras in \cite{L86,L89} we obtain the following formulation of the classification theorem.

{\def\thedefinition{\ref{rkmtheorem}}\addtocounter{definition}{-1}
\begin{theorem}[Hoyt \cite{H07, H08}]
If $A$ is a symmetrizable indecomposable matrix and the contragredient Lie superalgebra $\mathfrak{g}(A)$ has a simple
isotropic root, then $\mathfrak{g}(A)$ is a regular Kac-Moody superalgebra if and only if it has finite growth.

If $A$ is a non-symmetrizable indecomposable matrix and the contragredient Lie superalgebra $\mathfrak{g}(A)$ has a
simple isotropic root, then $\mathfrak{g}(A)$ is a regular Kac-Moody superalgebra if and only if it belongs to one of the following three families: $q(n)^{(2)}$, $S(1,2,\alpha)$ with $\alpha\in\mathbb{C}\setminus\mathbb{Z}$, $Q^{\pm}(m,n,t)$ with $m,n,t\in\mathbb{Z}_{\leq -1}$.
\end{theorem}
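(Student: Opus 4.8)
The plan is to prove the theorem not by comparing growth functions directly, but by first carrying out an independent and complete classification of regular Kac--Moody superalgebras (the content of \cite{H07, H08}) and then reading the stated dichotomy off by comparison with van de Leur's classification \cite{L86, L89}. Concretely I would establish: (a) a symmetrizable contragredient Lie superalgebra with indecomposable Cartan matrix, a simple isotropic root, and finite growth is regular Kac--Moody; (b) the regular Kac--Moody superalgebras with a simple isotropic root have, up to the choice of Cartan matrix, only finitely many Dynkin diagrams with isotropic nodes, which can be listed; (c) matching this finite list against \cite{L86, L89} shows that its symmetrizable members all have finite growth while its non-symmetrizable members are exactly $q(n)^{(2)}$, $S(1,2,\alpha)$ with $\alpha\in\mathbb{C}\setminus\mathbb{Z}$, and $Q^{\pm}(m,n,t)$ with $m,n,t\in\mathbb{Z}_{\le -1}$.

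For (a): the zero-row case forces, by symmetrizability and indecomposability, $A$ to be $1\times 1$, where both sides of the statement hold trivially; so assume $A$ has no zero row. By the result of \cite{HS07} recalled above, finite growth forces the simple root vectors of $\mathfrak{g}(A)$ to act locally nilpotently on the adjoint module, hence $A$ to satisfy the conditions slightly weaker than those defining a generalized Cartan matrix; and since an odd reflection produces an isomorphic Lie superalgebra, finite growth and the no-zero-row property persist under every sequence of odd reflections, so this holds for every Cartan matrix $A'$ obtained from $A$. The point where finite growth gives more than just local nilpotency of the simple root vectors at the single matrix $A$ is that imposing these weaker conditions simultaneously on all the $A'$ upgrades each one to a genuine generalized Cartan matrix --- an entry left unconstrained in a given $A'$ lands in a constrained slot after a suitable further odd reflection. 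An analogous argument applied to the rank-one subalgebra attached to a real isotropic root shows every real isotropic root is regular. Hence $\mathfrak{g}(A)$ is regular Kac--Moody. The ``if'' part of (c) for the three named families is the easy converse: write down a Cartan matrix, compute its (finite) odd-reflection orbit, and check each matrix in the orbit is a generalized Cartan matrix with all real isotropic roots regular.

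For (b), the classification itself: let $\mathfrak{g}(A)$ have $A$ indecomposable with a simple isotropic root $\alpha_i$, so $a_{ii}=0$. Regularity says that after each odd reflection $r_j$ at an isotropic node --- under which $\alpha_j\mapsto-\alpha_j$ and $\alpha_k\mapsto\alpha_k+\alpha_j$ exactly when $a_{jk}\ne 0$ --- the transformed matrix, whose entries one writes down explicitly, is again a generalized Cartan matrix. Iterating imposes severe local constraints: the only rank-$2$ and rank-$3$ principal submatrices through an isotropic node that survive all reflections are those of finite or affine type together with a short list of super blocks (roughly, those of $\mathfrak{sl}(1|1)$, $\mathfrak{sl}(2|1)\cong\mathfrak{osp}(2|2)$, $D(2,1,\alpha)$, and their affinizations), and an isotropic node can be adjacent to only a bounded configuration of nodes. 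A connectivity and propagation argument then shows the diagram is a line or a cycle carrying a bounded number of isotropic nodes, so up to the action of the Weyl groupoid the Cartan matrices fall into finitely many families; one enumerates these, separates the symmetrizable from the non-symmetrizable, and matches the symmetrizable families termwise with \cite{L86, L89} to finish (b) and (c).

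The main obstacle is step (b), and within it the non-symmetrizable part. Controlling the global shape of the diagram and the interplay of isotropic with ordinary nodes under arbitrary sequences of odd reflections applied to an $n\times n$ matrix with $n$ unbounded is delicate: one must show that, unless the diagram has one of the few permitted global forms, some reflection sequence produces a forbidden rank-$2$ or rank-$3$ block; and in the non-symmetrizable case one must pin down the parameters (the $\alpha$ of $S(1,2,\alpha)$, the $m,n,t$ of $Q^{\pm}(m,n,t)$) and prove that exactly the stated arithmetic restrictions ($\alpha\notin\mathbb{Z}$, $m,n,t\le -1$) are what keeps every reflected matrix a generalized Cartan matrix. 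By contrast, once the surviving symmetrizable families are identified inside van de Leur's list, their finite growth is automatic, so no separate growth estimate is needed.
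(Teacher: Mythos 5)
There are two genuine problems. First, your justification of step (a) is wrong where it matters. You claim that finite growth forces each reflected matrix $A'$ to satisfy the weak integrability conditions, and that ``imposing these weaker conditions simultaneously on all the $A'$ upgrades each one to a genuine generalized Cartan matrix'' because an unconstrained entry lands in a constrained slot after a further reflection. That upgrade is false: the paper's admissibility conditions (Lemma~\ref{lm23}) include only the weak condition~4 ($a_{ij}=0$, $a_{ji}\neq 0$ $\Rightarrow$ $a_{ii}=0$), not condition~$4'$, and the superalgebras $S(1,2,0)$, $D(2,1,0)$, $\widehat{D}(2,1,0)$ are admissible, of finite growth, and yet retain a singular isotropic root in some base, so they are \emph{not} regular Kac--Moody. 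This is exactly why the paper splits the classification into the regular and the singular cases. The correct argument in the symmetrizable case does not go through any ``upgrade'': if $A'$ is symmetrizable via an invertible diagonal $D$, then $a'_{ij}=0$ forces $a'_{ji}=0$ directly, so admissibility plus symmetrizability (which is preserved under odd reflection) gives condition~$4'$ for free. You state the symmetrizability hypothesis but never use it at the one point where it is indispensable.

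Second, and more seriously, your step (b) omits the key idea that makes the completeness proof work. You propose to show that any diagram outside the permitted list admits a reflection sequence producing a forbidden rank-$2$ or rank-$3$ block, via a ``connectivity and propagation argument'' concluding that the diagram is a line or a cycle. That conclusion is already false ($D(2,1,\alpha)^{(1)}$ and $\mathfrak{osp}(m|2n)^{(1)}$ are regular Kac--Moody with branch nodes), and more importantly local small-rank obstructions are not what rules out extending an affine regular Kac--Moody diagram by one more vertex. The paper's completeness argument is representation-theoretic: if $\Gamma$ is a proper connected subdiagram of a connected regular Kac--Moody diagram $\Gamma\cup\{v_{n+1}\}$, then $\mathfrak{g}(A)$ acting on $Y_{n+1}$ generates a non-trivial integrable highest weight module (Lemma~\ref{M} and its Corollary), and by Theorems~\ref{p1} and~\ref{p2} (Kac--Wakimoto and its twisted extension) the only non-finite-type candidates admitting such modules are $\mathfrak{sl}(1|n)^{(1)}$, $\mathfrak{osp}(2|2n)^{(1)}$, $S(1,2,\alpha)$ and $Q^{\pm}(m,n,t)$ (Corollary~\ref{lm123}). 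This is what bounds the induction on the number of vertices once the diagrams satisfying condition~(\ref{gammaproperty}) have been listed by extending finite-type diagrams. Your proposal flags this step as the main obstacle but supplies no mechanism to overcome it; without the integrable-module obstruction the classification for unbounded $n$ does not close.
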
}

We discover that regular Kac-Moody superalgebras almost always have finite growth.  The only exception is a family of $3\times 3$-matrices, $Q^{\pm}(m,n,t)$.  This superalgebra is new, and a realization is not known.

We classify regular Kac-Moody superalgebras by extending finite type diagrams.  We then prove that our list is complete by using the theory of integrable highest weight modules of Kac-Moody superalgebras developed in \cite{KW01}.

\section{Lie superalgebras}\ \indent
A {\em superalgebra}\index{superalgebra} is a $\mathbb{Z}_{2}$-graded algebra $A=A_{\overline{0}} \oplus
A_{\overline{1}}$  over $\mathbb{C}$.  A {\em Lie superalgebra} is a
(non-associative) superalgebra $\mathfrak{g}=\mathfrak{g}_{\overline{0}} \oplus \mathfrak{g}_{\overline{1}}$ where the
product operation $[\cdot,\cdot]: \mathfrak{g} \times \mathfrak{g} \rightarrow \mathfrak{g}$ satisfies the following
axioms, with $x \in \mathfrak{g}_{p(x)}$, $y \in \mathfrak{g}_{p(y)}$:
\begin{align*}
&[x,y]=-(-1)^{p(x)p(y)} [y,x]  &\text{super skew-symmetry,} \\
&[x,[y,z]] = [[x,y],z] + (-1)^{p(x)p(y)} [y,[x,z]]  &\text{super Jacobi identity.}
\end{align*}
The subalgebra $\mathfrak{g}_{\overline{0}}$ is a Lie algebra.

\begin{example}
The general Lie superalgebra $\mathfrak{gl}(m|n)$:  Let $M_{r,s}$ denote the set of $r\times s$ matrices. As a vector space $\mathfrak{gl}(m|n)$ is $M_{m+n,m+n}$, where:
\begin{equation*}
\mathfrak{g}_{\overline{0}}=\left\{\left(\begin{array}{cc} A & 0 \\ 0 & B\end{array}\right)\mid A\in M_{m,m},\ B\in M_{n,n}\right\}, \text{ and }
\mathfrak{g}_{\overline{1}}=\left\{\left(\begin{array}{cc} 0 & C \\ D & 0\end{array}\right)\mid C\in M_{m,n},\ D\in M_{n,m}\right\}.
\end{equation*}
The bracket operation is defined on homogeneous elements as follows:  if $X\in\mathfrak{g}_{i}$, $Y\in\mathfrak{g}_{j}$, then $$[X,Y]:=XY-(-1)^{ij}YX,$$ and is extended linearly to the superalgebra.
\end{example}

\begin{example}
The special Lie superalgebra $$\mathfrak{sl}(m|n):=\left\{X=\left(\begin{array}{cc} A & C \\ D & B\end{array}\right)\in\mathfrak{gl}(m|n)\mid \mbox{supertr}(X):=\mbox{tr}(A)-\mbox{tr}(B)=0 \right\}.$$
\end{example}

\section{Contragredient Lie superalgebras}\ \indent
Let $A$ be a $n\times n$ matrix over $\mathbb{C}$, $I=\{1,\ldots,n\}$ and $p:I \to {\mathbb Z}_{2} $ be a parity function.  Fix a vector space $\mathfrak{h}$ over $\mathbb{C}$ of dimension $2n-\operatorname{rk}(A)$.
Let $\alpha_{1},\dots ,\alpha_{n}\in{\mathfrak h}^{*}$ and $h_{1},\dots ,h_{n}\in{\mathfrak h}$ be linearly independent elements satisfying $\alpha_{j}\left(h_{i}\right)=a_{ij}$, where $a_{ij}$ is the $ij$-th entry of $A$.

Define a Lie superalgebra $\tilde{\mathfrak{g}}(A)$ by generators $X_{1},\dots ,X_{n},Y_{1},\dots ,Y_{n}$ and $\mathfrak{h}$, and by relations
$$\left[X_{i},Y_{j}\right]=\delta_{ij}h_{i}\text{,\hspace{.5cm}
}\left[h,X_{i}\right]=\alpha_{i}\left(h\right)X_{i}\text{,\hspace{.5cm} }\left[h,Y_{i}\right]=-\alpha_{i}\left(h\right)Y_{i},\hspace{.5cm}\text{for all }h\in\mathfrak{h},$$
where the parity of $X_i$ and $Y_i$ is $p(i)$, and the elements of $\mathfrak{h}$ are even.

\newpage
Let $r(A)$ be the maximal ideal of $\tilde{\mathfrak{g}}(A)$ which intersects $\mathfrak{h}$ trivially.
Then $$\frak{g}(A):=\tilde{\mathfrak{g}}(A)/ r(A)$$ is a {\em contragredient Lie superalgebra}.

The matrix $A$ is the {\em Cartan matrix} of $\mathfrak{g}(A)$ for the set of simple roots $\Pi:=\{\alpha_1,\ldots,\alpha_n\}$.  If $B=DA$ for some invertible diagonal matrix $D$, then $\mathfrak{g}(B)\cong\mathfrak{g}(A)$.  Hence, we may assume without loss of generality that $a_{ii}\in\{0,2\}$ for $i\in I$.  The matrix $A$ is said to be {\em symmetrizable} if there exists an invertible diagonal matrix $D$ such that $B=DA$ is a symmetric matrix, i.e. $b_{ij}=b_{ji}$ for all $i,j\in I$.  In this case, we also say that $\mathfrak{g}(A)$ is symmetrizable.

We have the following lemma \cite{HS07}.

\begin{lemma} \label{lm20} For any subset $ J\subset I $ the subalgebra $\mathfrak{a}_{J}$ in $\mathfrak{g}\left(A\right) $ generated by $ \mathfrak{h} $, $ X_{i} $ and $ Y_{i} $, with $ i\in J $, is isomorphic to $ \mathfrak{h}'\oplus{\mathfrak g}\left(A_{J}\right) $, where $ A_{J} $ is the submatrix of $A$
with coefficients $ \left(a_{ij}\right)_{i,j\in J} $ and $\mathfrak{h}' $ is a subspace of $\mathfrak{h} $. More
precisely, $\mathfrak{h}' $ is a maximal subspace in $ \cap_{i\in J} \operatorname{Ker} \alpha_{i} $ which trivially
intersects the span of $ h_{i} $, $ i\in J $.
\end{lemma}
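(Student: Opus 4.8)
The plan is to work inside the universal object $\tilde{\mathfrak g}(A)$ first, and only afterward pass to the quotient by $r(A)$. Let $\tilde{\mathfrak a}_J \subset \tilde{\mathfrak g}(A)$ be the subalgebra generated by $\mathfrak h$, $X_i$, $Y_i$ for $i \in J$. The defining relations of $\tilde{\mathfrak g}(A)$ restricted to these generators are exactly the defining relations of $\tilde{\mathfrak g}(A_J)$ together with the relations expressing that $\mathfrak h$ acts via the weights $\alpha_i|_{\mathfrak h}$, $i\in J$, and that $[\mathfrak h,\mathfrak h]=0$. So the first step is the presentation-chasing step: choose a decomposition $\mathfrak h = \mathfrak h_J \oplus \mathfrak h'$, where $\mathfrak h_J := \operatorname{span}(h_i : i\in J)$ (these are linearly independent by hypothesis) and $\mathfrak h'$ is a complement chosen inside $\bigcap_{i\in J}\operatorname{Ker}\alpha_i$ of the largest possible dimension that still meets $\mathfrak h_J$ trivially — one checks this is possible by a dimension count, since $2n - \operatorname{rk}(A) \ge 2|J| - \operatorname{rk}(A_J)$ comes from the linear independence of the $\alpha_i$ and $h_i$. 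With this splitting, $\mathfrak h'$ is central in $\tilde{\mathfrak a}_J$, and the subspace of $\mathfrak h_J \oplus \mathfrak h'$ together with the $X_i,Y_i$ ($i\in J$) satisfies precisely the relations defining $\tilde{\mathfrak g}(A_J) \oplus \mathfrak h'$ (abelian direct summand), with the role of the Cartan subalgebra of $\tilde{\mathfrak g}(A_J)$ played by $\mathfrak h_J$ extended, if necessary, inside $\bigcap_{i\in J}\operatorname{Ker}\alpha_i$ to reach dimension $2|J| - \operatorname{rk}(A_J)$. The universal property of $\tilde{\mathfrak g}(A_J)$ then gives a surjection $\tilde{\mathfrak g}(A_J)\oplus\mathfrak h' \twoheadrightarrow \tilde{\mathfrak a}_J$.

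The second step is to descend this to $\mathfrak g(A)$. Write $\mathfrak a_J$ for the image of $\tilde{\mathfrak a}_J$ in $\mathfrak g(A) = \tilde{\mathfrak g}(A)/r(A)$. By the root-space decomposition of $\mathfrak g(A)$ with respect to $\mathfrak h$, the subalgebra $\mathfrak a_J$ is $\mathfrak h'$-graded and in fact $Q_J$-graded, where $Q_J = \sum_{i\in J}\mathbb Z\alpha_i$; since $\mathfrak h'$ acts trivially on all of $\mathfrak a_J$ (the $\alpha_i$, $i\in J$, vanish on $\mathfrak h'$), $\mathfrak h'$ is a central ideal and $\mathfrak a_J = \mathfrak h' \oplus [\text{the }Q_J\text{-graded part not in }\mathfrak h]$ — more precisely $\mathfrak a_J = \mathfrak h' \oplus \mathfrak b_J$ where $\mathfrak b_J$ is the subalgebra generated by $X_i,Y_i$ ($i\in J$) together with $\mathfrak h_J$. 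It remains to identify $\mathfrak b_J$ with $\mathfrak g(A_J)$. We already have a surjection $\mathfrak g(A_J)\to\mathfrak b_J$ by composing the step-one surjection with the quotient map and projecting off $\mathfrak h'$; note this map is compatible with the embeddings of Cartan subalgebras, so it is injective on the Cartan part.

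For injectivity in general, the third step invokes the characterization of $r(A)$. Recall $r(A)$ is the \emph{maximal} ideal of $\tilde{\mathfrak g}(A)$ trivially intersecting $\mathfrak h$. Pull back along $\tilde{\mathfrak g}(A_J)\hookrightarrow$ the generating set: the kernel $K$ of $\mathfrak g(A_J)\to\mathfrak b_J$ is an ideal of $\mathfrak g(A_J)$; I must show $K\cap\mathfrak h_J$-part is $0$, which we have, and then conclude $K=0$ by maximality of $r(A_J)$ — i.e., I lift $K$ to an ideal $\tilde K$ of $\tilde{\mathfrak g}(A_J)$ containing $r(A_J)$, with $\tilde K\cap(\text{Cartan}) = r(A_J)\cap(\text{Cartan}) = 0$, hence $\tilde K = r(A_J)$ and $K = 0$. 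The main obstacle is exactly this last point: ensuring that the ideal of $\tilde{\mathfrak g}(A)$ killing the "extra" part of $\mathfrak b_J$ restricts to an ideal of $\tilde{\mathfrak g}(A_J)$ that still meets the Cartan trivially — one must rule out that elements of $r(A)$, when intersected with the subalgebra $\tilde{\mathfrak a}_J$, could produce a nonzero element of $\mathfrak h_J$. This follows because $r(A)\cap\mathfrak h = 0$ and $\mathfrak h_J\subset\mathfrak h$, so any such element lies in $r(A)\cap\mathfrak h = 0$; the genuinely delicate verification is that $r(A)\cap\tilde{\mathfrak a}_J$ is contained in (the preimage of) $r(A_J)\oplus\mathfrak h'$ rather than being strictly larger, which is where the $Q_J$-grading and the maximality of $r(A_J)$ are combined. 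Granting that, the surjection of step two is an isomorphism onto $\mathfrak b_J$, and $\mathfrak a_J \cong \mathfrak h' \oplus \mathfrak g(A_J)$ as claimed.
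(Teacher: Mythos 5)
The paper itself does not prove Lemma~\ref{lm20}; it is quoted from \cite{HS07}. So I can only judge your proposal on its own terms. Your overall skeleton --- build a realization of $A_J$ inside $\mathfrak{h}$, use the universal property of $\tilde{\mathfrak{g}}(A_J)$ to get a map, then identify the kernel by playing the maximality of $r(A)$ against the maximality of $r(A_J)$ --- is the right strategy. But two points need attention, one fixable and one a genuine gap.

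First, the Cartan decomposition. In general $\mathfrak{h}\neq\mathfrak{h}_J\oplus\mathfrak{h}'$: a dimension count gives $\dim\mathfrak{h}_J+\dim\mathfrak{h}'=\dim\mathfrak{h}-(|J|-\operatorname{rk}A_J)$, so equality fails whenever $A_J$ is singular (already for $J=\{i\}$ with $a_{ii}=0$). Your patch --- extending $\mathfrak{h}_J$ \emph{inside} $\bigcap_{i\in J}\operatorname{Ker}\alpha_i$ --- cannot be carried out, because $\bigcap_{i\in J}\operatorname{Ker}\alpha_i\subseteq\mathfrak{h}_J+\mathfrak{h}'$ once $\mathfrak{h}'$ is maximal, so there is no room left there. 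The correct move is to take $\mathfrak{h}''$ to be \emph{any} complement of $\mathfrak{h}'$ in $\mathfrak{h}$ containing $\mathfrak{h}_J$; then $\dim\mathfrak{h}''=2|J|-\operatorname{rk}A_J$ and the restrictions $\alpha_i|_{\mathfrak{h}''}$ stay linearly independent (since the $\alpha_i$ already vanish on $\mathfrak{h}'$), so $(\mathfrak{h}'',\{\alpha_i|_{\mathfrak{h}''}\},\{h_i\})_{i\in J}$ is a realization of $A_J$. Note that the extra directions necessarily lie \emph{outside} $\bigcap_{i\in J}\operatorname{Ker}\alpha_i$.

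Second, and more seriously, the step you defer with ``Granting that'' is the heart of the proof, and you have the two containments backwards. The containment $r(A)\cap\tilde{\mathfrak{a}}_J\subseteq r(A_J)$ is the \emph{easy} one: $r(A)\cap\tilde{\mathfrak{a}}_J$ is a $Q$-graded ideal with no zero-weight component, hence sits inside $\tilde{\mathfrak{g}}(A_J)$, meets $\mathfrak{h}''$ trivially, and is swallowed by $r(A_J)$ by the latter's maximality. What is never argued is the reverse containment $r(A_J)\subseteq r(A)$, without which your claimed surjection $\mathfrak{g}(A_J)\to\mathfrak{b}_J$ is not even well defined (you only get a surjection $\mathfrak{b}_J\to\mathfrak{g}(A_J)$, and $\mathfrak{b}_J$ could a priori be strictly larger). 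The missing argument: write $r(A_J)=r^+\oplus r^-$ with weights in $\pm(Q_J^+\setminus\{0\})$; for $k\notin J$ and $x$ of weight $\mu\in Q_J^+\setminus\{0\}$ the weight $\mu-\alpha_k$ has coefficients of mixed sign, hence is not a weight of $\tilde{\mathfrak{g}}(A)$, so $[Y_k,x]=0$, while $[X_k,x]$ has weight $\mu+\alpha_k\neq0$. Therefore the ideal of $\tilde{\mathfrak{g}}(A)$ generated by $r(A_J)$ keeps all its weights in $\pm(Q^+\setminus\{0\})$, meets $\mathfrak{h}$ trivially, and lies in $r(A)$ by maximality of $r(A)$. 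With that supplied, your step three does close the argument.
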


Let $\mathfrak{n}_{+}$ (resp. $\mathfrak{n}_{-}$) denote the subalgebra of $\mathfrak{g}(A)$ generated by the elements
$X_i$ (resp. $Y_i$), $i\in I$.  Then $\mathfrak{g}(A)=\mathfrak{n}^{-}\oplus\mathfrak{h}\oplus\mathfrak{n}^{+}$.

\section{Roots}\ \indent
The Lie superalgebra $ {\mathfrak g}={\mathfrak g}\left(A\right) $ has a {\em root space decomposition}
\begin{equation}
{\mathfrak g}={\mathfrak h}\oplus\bigoplus_{\alpha\in\Delta}{\mathfrak g}_{\alpha}.
\notag\end{equation}
Every root is either a positive or a negative linear combination of the simple roots, $\alpha_1,\ldots,\alpha_n$.  Accordingly, we  have the decomposition $ \Delta=\Delta^{+}\cup\Delta^{-} $, and we call $\alpha\in\Delta^{+}$ positive and $\alpha\in\Delta^{-}$ negative.
One can define $ p:\Delta \to {\mathbb Z}_{2} $ by letting $ p\left(\alpha\right)=0 $ or 1 whenever $\alpha $ is even or odd, respectively. By $ \Delta_{0}$ (resp. $\Delta_{1}$) we denote the set of even (resp. odd) roots.\\

There are four possibilities for each simple root:
\begin{enumerate}
\item if $ a_{ii}=2 $ and $ p(\alpha_i)=0 $,
then $ X_{i} $, $ Y_{i} $ and $ h_{i} $ generate a subalgebra isomorphic to $ \mathfrak{sl}\left(2\right) $;

\item if $ a_{ii}=0 $ and $ p(\alpha_i)=0 $,
then $ X_{i} $, $ Y_{i} $ and $ h_{i} $ generate a subalgebra isomorphic to the Heisenberg algebra;

\item if $ a_{ii}=2 $ and $ p(\alpha_i)=1 $, then $ X_{i} $, $ Y_{i} $ and $ h_{i} $ generate a subalgebra isomorphic to $ \mathfrak{osp}\left(1|2\right) $, and in this case $2\alpha_i\in\Delta $;

\item if $ a_{ii}=0 $ and $ p(\alpha_i)=1 $, then  $ X_{i} $, $ Y_{i} $ and $
h_{i} $ generate a subalgebra isomorphic to $ \mathfrak{sl}\left(1|1\right) $.
\end{enumerate}
In the last case we say that $ \alpha $ is {\em isotropic}, and in the other cases a root is called {\em non-isotropic}.  A simple root $\alpha_i$ is {\em regular} if for any other simple root $\alpha_j$, $a_{ij}=0$ implies $a_{ji}=0$.  Otherwise a simple root is called {\em singular}.\\

It is useful to describe $A$ by the corresponding Dynkin diagram, which we denote $\Gamma_{A}$, see \cite{K77,L86}. Here we use matrix diagrams, but still follow the standard labeling conventions for the vertices.

\begin{equation*}\doublespacing
\begin{tabular}{|c|c|c|c|}
\hline
$\mathfrak{g}(A)$ & $A$ & $p(1)$ & Dynkin diagram \\
\hline
$\mathfrak{sl}(2)$ & (2) & 0 & $\O$ \\
$\mathfrak{osp}(1,2)$ & (2) & 1 & \B \\
$\mathfrak{sl}(1,1)$ & (0) & 1 & $\OX$ \\
\hline
\end{tabular}
\end{equation*}
We join vertex $i$ to vertex $j$ by an arrow if $a_{ij} \neq 0$ and we write the number $a_{ij}$ on this arrow.

\newpage
\section{Odd reflections}\ \indent
Let $\Pi=\{\alpha_1,\ldots,\alpha_n\}$ be the set of simple roots of $\mathfrak{g}(A)$.  If $\alpha_k\in\Pi$ and $a_{kk}\neq 0$, we define the (even) reflection $r_{k}$ at $\alpha_k$ by
\begin{equation*}
r_{k}(\alpha_i)=\alpha_i - \alpha_i(h_k)\alpha_k,\hspace{2cm}\alpha_i\in\Pi.
\end{equation*}

\noindent
If $\alpha_k\in\Pi$, $a_{kk}=0$ and $p(\alpha_k)=1$, we define the {\em odd reflection} $r_{k}$ at $\alpha_k$ as follows:
\begin{equation*} r_{k}(\alpha_{i}):=  \left\{
  \begin{array}{ll}
    -\alpha_{k}, & \hbox{if $i=k$;} \\
    \alpha_{i}, & \hbox{if $a_{ik}=a_{ki}=0$, $i \neq k$;} \\
    \alpha_{i}+\alpha_{k}, & \hbox{if $a_{ik}\neq 0$ or $a_{ki} \neq 0$,  $i \neq k$;}
  \end{array}\hspace{1cm}\alpha_i\in\Pi,
\right.\end{equation*}

\begin{equation*}
 X_{i}':=\left\{
  \begin{array}{ll}
     Y_{i}, & \hbox{if $i=k$;}\\
     X_{i}, & \hbox{if $i\neq k$, and $a_{ik}=a_{ki}=0$;}\\
    {[X_{i},X_{k}]}, & \hbox{if $i\neq k$, and $a_{ik}\neq 0$ or $a_{ki}\neq 0$;}
\end{array}\right.
\end{equation*}
\begin{equation*}
 Y_{i}':=\left\{
  \begin{array}{ll}
     X_{i}, & \hbox{if $i=k$;}\\
     Y_{i}, & \hbox{if $i\neq k$, and $a_{ik}=a_{ki}=0$;}\\
    {[Y_{i},Y_{k}]}, & \hbox{if $i\neq k$, and $a_{ik}\neq 0$ or $a_{ki}\neq 0$;}
\end{array}\right.
\end{equation*}
and
\begin{equation*}
h_{i}' := [X_{i}',Y_{i}'].
\end{equation*}
Then $$ h_{i}' = \begin{cases}
    h_{k} &\text{ if } i=k,\\
    h_{i} &\text{ if }i \neq k\text{, and }a_{ik}=a_{ki}=0, \\
    (-1)^{p(\alpha_{i})} (a_{ik}h_{k}+a_{ki}h_{i}) &\text{ if }i \neq k\text{, and }a_{ik} \text{ or }a_{ki} \neq 0. \end{cases} $$
Set $\alpha'_i:=r_{k}(\alpha_i)$ for $i\in I$.\\

We have the following lemma \cite{HS07}.

\begin{lemma} The roots $\alpha'_1,\dots\alpha'_n$ are linearly independent.  The elements
\begin{equation*}
X'_1,\dots,X'_n \text{, } \hspace{.5cm} Y'_1,\dots,Y'_n \text{, } \hspace{.5cm} h'_1,\dots,h'_n
\end{equation*}
satisfy the Chevalley relations
\begin{equation*}
\left[h'_j,X'_{i}\right]=\alpha'_{i}\left(h'_j\right)X'_{i}
\text{, } \hspace{.5cm} \left[h'_j,Y'_{i}\right]=-\alpha'_{i}\left(h'_j\right)Y'_{i}
\text{, } \hspace{.5cm} \left[X'_{i},Y'_{j}\right]=\delta_{ij}h'_{i}.
\end{equation*}
Moreover, if $\alpha_k$ is regular, then $X'_i, Y'_i, \mathfrak{h}$ generate $\mathfrak{g}(A)$.
\end{lemma}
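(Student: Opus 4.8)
I would prove the three assertions --- linear independence of the reflected simple roots, the Chevalley relations, and generation under regularity --- by direct computation with the super Jacobi identity and the defining relations of $\mathfrak{g}(A)$. The remark that shortens the middle assertion is this: directly from the definitions one has $X_i'\in\mathfrak{g}_{\alpha_i'}$, $Y_i'\in\mathfrak{g}_{-\alpha_i'}$, and $h_i'\in\mathfrak{h}$ (the last visible from the displayed formula for $h_i'$, once that formula is proved), since the root space grading is compatible with the bracket. Granting this, the relations $[h_j',X_i']=\alpha_i'(h_j')X_i'$ and $[h_j',Y_i']=-\alpha_i'(h_j')Y_i'$ hold automatically, so the only Chevalley relations with content are $[X_i',Y_j']=\delta_{ij}h_i'$.

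\emph{Linear independence of $\alpha_1',\dots,\alpha_n'$.} From the definition of $r_k$ one has $\alpha_k'=-\alpha_k$ and $\alpha_i'=\alpha_i+\varepsilon_i\alpha_k$ with $\varepsilon_i\in\{0,1\}$ for $i\neq k$. Hence the matrix expressing $(\alpha_1',\dots,\alpha_n')$ in terms of $(\alpha_1,\dots,\alpha_n)$ coincides with the identity outside its $k$-th row and column and has $-e_k$ as its $k$-th column; expanding along that column gives determinant $-1$. Since the $\alpha_i$ are linearly independent, so are the $\alpha_i'$.

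\emph{The Chevalley relations.} For $i=j$ the relation $[X_i',Y_i']=h_i'$ is just the definition of $h_i'$, and computing $[X_i',Y_i']$ in each of the three cases of the definition yields the displayed formula: for $i=k$ it is $[Y_k,X_k]=h_k$ (the sign works out since $p(\alpha_k)=1$), for $i\neq k$ with $a_{ik}=a_{ki}=0$ it is $[X_i,Y_i]=h_i$, and for $i\neq k$ with $a_{ik}\neq 0$ or $a_{ki}\neq 0$ one expands $[[X_i,X_k],[Y_i,Y_k]]$ by the super Jacobi identity, using $[X_k,Y_i]=[X_i,Y_k]=0$ and $[X_k,Y_k]=h_k$, to reach $(-1)^{p(\alpha_i)}(a_{ik}h_k+a_{ki}h_i)$; in particular $h_i'\in\mathfrak{h}$, as promised above. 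For $i\neq j$ one proves $[X_i',Y_j']=0$ by the same technique, distinguishing cases according to whether each of $i,j$ equals $k$, is of the commuting type ($a_{\bullet k}=a_{k\bullet}=0$), or is of the bracketed type, and applying super Jacobi repeatedly together with the relations $[X_i,Y_j]=0$ for $i\neq j$, $[X_i,Y_k]=[X_k,Y_j]=0$ for $i,j\neq k$, and --- via Lemma \ref{lm20} applied to $J=\{i,k\}$ or $\{j,k\}$ --- $[X_i,X_k]=[Y_i,Y_k]=0$ whenever $a_{ik}=a_{ki}=0$. I expect the main obstacle to be the sign bookkeeping in the super Jacobi identity, sharpest in the case $i\neq j$ with both of the bracketed type, where after the inner brackets are killed the surviving term is a scalar multiple of $[Y_j,X_i]=0$.

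\emph{Generation when $\alpha_k$ is regular.} Since $\mathfrak{g}(A)$ is generated by $\mathfrak{h}$ and the $X_i,Y_i$, it suffices to recover every $X_i$ and $Y_i$ inside the subalgebra $\mathfrak{g}'$ generated by $\mathfrak{h}$ and the $X_i',Y_i'$. One has $X_k=Y_k'$, $Y_k=X_k'$, and $X_i=X_i'$, $Y_i=Y_i'$ when $i\neq k$ and $a_{ik}=a_{ki}=0$. In the remaining case $i\neq k$ with $a_{ik}\neq 0$ or $a_{ki}\neq 0$, regularity of $\alpha_k$ forces $a_{ki}\neq 0$ --- if $a_{ik}\neq 0$ this is the contrapositive of ``$a_{ki}=0$ implies $a_{ik}=0$'', and if $a_{ik}=0$ it is the hypothesis of this case --- and a one-step super Jacobi computation gives $[Y_k,X_i']=-(-1)^{p(\alpha_i)}a_{ki}X_i$ and $[X_k,Y_i']=(-1)^{p(\alpha_i)}a_{ki}Y_i$, so $X_i,Y_i\in\mathfrak{g}'$; hence $\mathfrak{g}'=\mathfrak{g}(A)$. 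Regularity is used exactly here, and it is exactly what can fail: without it one may have $a_{ik}\neq 0=a_{ki}$, and then this argument does not recover $X_i$.
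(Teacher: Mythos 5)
Your proposal is correct. The paper itself states this lemma without proof, quoting it from \cite{HS07}, so there is no in-paper argument to compare against; your direct verification --- reducing the Chevalley relations to $[X_i',Y_j']=\delta_{ij}h_i'$ via the root-space grading, checking the cases with the super Jacobi identity and Lemma~\ref{lm20}, computing the determinant $-1$ of the change of basis, and using regularity of $\alpha_k$ precisely to guarantee $a_{ki}\neq 0$ so that $X_i=-(-1)^{p(\alpha_i)}a_{ki}^{-1}[Y_k,X_i']$ is recoverable --- is the standard argument and matches what is done in the cited source.
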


Given a matrix $A$ and a regular isotropic root $\alpha_k$, one can construct a new matrix $A'$ such that $\mathfrak{g}(A')$ and $\mathfrak{g}(A)$ are isomorphic as Lie superalgebras.  Explicitly, the entries of $A'$ are defined by $$a'_{ij}:=\alpha'_j(h'_i).$$  After rescaling the rows of $A'$ we obtain the following (where we assume $ i \neq k$ and $ j \neq k $):
\begin{equation*}
    a_{kk}' := a_{kk}; \hspace{1cm}
    a_{kj}' := a_{kj}; \hspace{1cm}
    a_{ik}' := -a_{ki}a_{ik}; \end{equation*}\begin{equation*}
    a_{ij}' := \begin{cases}
        a_{ij}, &\text{ if } a_{ik}=a_{ki}=0; \\
        a_{ki}a_{ij}, &\text{ if }a_{ik} \text{ or } a_{ki} \neq 0, \text{ and } a_{kj}=a_{jk}=0; \\
        a_{ki}a_{ij}+a_{ik}a_{kj}+a_{ki}a_{ik}, &\text{ if }a_{ik} \text{ or } a_{ki}\neq 0, \text{ and } a_{jk} \text{ or } a_{kj} \neq 0.
        \end{cases}
\end{equation*}
We say that $A'$ is obtained from $A$ (and $\Pi':=\{\alpha'_1,\ldots,\alpha'_n\}$ is obtained from $\Pi$) by an odd reflection with respect to $\alpha_k$.  If $\Delta'^{+}$ is the set of positive roots with respect to $\Pi'$, then
\begin{equation*}
\Delta'^{+}=\left(\Delta^{+}\setminus\{\alpha_k\}\right)\cup\{-\alpha_k\}.
\end{equation*}
We call $\Pi'$ a {\em reflected base} if it can be obtained from $\Pi$ by a sequence of even and odd reflections A root $\alpha$ is called {\em real} if $\alpha$ or $\frac{1}{2} \alpha$ is simple in some reflected base $\Pi'$, and it is called {\em imaginary} otherwise.\\

An odd reflection at a regular isotropic root $\alpha_k$ is indeed a reflection.  If $A''=r'_k (r_k(A))$, then there is an invertible diagonal matrix $D$ such that $A''=DA$ and scalars $b_i,c_i$ such that $X''_i=b_i X_i$ and $Y''_i=c_i Y_i$.  However, if $\alpha_k$ is singular, then the subalgebra generated by $ X'_{1},\dots ,X'_{n},Y'_{1},\dots ,Y'_{n} $ and $ {\mathfrak h} $ is necessarily a proper subalgebra of $\mathfrak{g}(A)$ and $r_k$ is not invertible.

\begin{example}  $\mathfrak{sl}(1|2)$
\begin{equation*}
\left(\begin{array}{ccccc}
0 & 1 \\
1 & 0 \\
\end{array}\right)
\hspace{1cm} \overrightarrow{ r_1 } \hspace{1cm}
\left(\begin{array}{cc}
0 & 1 \\
-1 & 2\\
\end{array}\right)
\end{equation*}
\begin{equation*}
\xymatrix{ \OX \AW[r]^{1}_{1} & \OX }
 \hspace{1cm} \overrightarrow{ r_1 } \hspace{1cm}
\xymatrix{\OX \AW[r]^{1}_{-1} & \O }
\end{equation*}
\end{example}

The following definition was given by V. Serganova in \cite{S08}.

\begin{definition}
Let $C$  be a category with objects $\mathfrak{g}(A)$ for each matrix $A$.  A $C$-morphism $f:\mathfrak{g}(A)\rightarrow\mathfrak{g}(A')$ is by definition an isomorphism of superalgebras which maps a Cartan subalgebra of $\mathfrak{g}(A)$ to a Cartan subalgebra of $\mathfrak{g}(A')$.  A Weyl groupoid $C(A)$ of a contragredient Lie superalgebra $\mathfrak{g}(A)$ is by definition the connected component of $C$ which contains $\mathfrak{g}(A)$.
\end{definition}

\section{Integrability conditions}\ \indent
We say that $ {\mathfrak g}\left(A\right) $ is {\em integrable} if $ \operatorname{ad}_{X_{i}}
$ are locally nilpotent for all $ i\in I $. In this case $ \operatorname{ad}_{Y_{i}} $ are also locally nilpotent.
A matrix $A$ is {\em indecomposable} if the the index set $I$ can not be decomposed into the disjoint union of non-empty subsets $J,K$ such that $a_{j,k}=a_{k,j}=0$ whenever $j\in J$ and $k\in K$.

\begin{lemma} \label{lm23}   Let $ A $ be be an indecomposable with $ n\geq2 $ and no zero rows. If $ {\mathfrak g}\left(A\right) $ is integrable, then after rescaling the rows $ A $ satisfies the following
conditions:
\begin{enumerate}
\item for any $ i\in I $ either $ a_{i i}=0 $ or $ a_{i i}=2 $;
\item if $ a_{i i}=0 $ then $ p\left(i\right)=1 $;
\item if $ a_{ii}=2 $ then $ a_{ij}\in2^{p\left(i\right)}{\mathbb Z}_{\leq 0} $;
\item if $ a_{ij}=0 $ and $ a_{ji}\not=0 $, then $ a_{i i}=0 $.
\end{enumerate}
\end{lemma}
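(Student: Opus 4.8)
The plan is to derive each condition by examining how $\operatorname{ad}_{X_i}$ and $\operatorname{ad}_{Y_i}$ act on the $\mathfrak{sl}(2)$- or $\mathfrak{osp}(1|2)$- or Heisenberg-type subalgebra attached to the $i$-th node, using the $n=2$ submatrices $A_{\{i,j\}}$ together with Lemma \ref{lm20}. Since the row-rescaling freedom $A\mapsto DA$ is available, I would first reduce to the case $a_{ii}\in\{0,2\}$ as already noted in the text, which is condition (1); the point is only to check this is compatible with the rescalings needed for (3) and (4), so I would fix the scaling by the diagonal once and for all and argue the off-diagonal claims relative to it. For condition (2), suppose $a_{ii}=0$ but $p(i)=0$; then by the case analysis of simple roots, $X_i,Y_i,h_i$ generate a Heisenberg algebra, and $[h_i,X_i]=a_{ii}X_i=0$, so $h_i$ is central in that subalgebra. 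Because $A$ is indecomposable with $n\geq 2$ and no zero rows, there is some $j$ with $a_{ij}\neq 0$ or $a_{ji}\neq 0$; I would then exhibit an infinite chain $X_j,[X_i,X_j],[X_i,[X_i,X_j]],\dots$ of nonzero root vectors (the brackets are nonzero because repeatedly applying $\operatorname{ad}_{Y_i}$ recovers $X_j$ up to nonzero scalars coming from the Heisenberg relations, never hitting zero since $\alpha_j(h_i)\neq 0$ stays constant), contradicting local nilpotency of $\operatorname{ad}_{X_i}$.

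For condition (3), fix $i$ with $a_{ii}=2$ and any $j\neq i$; by Lemma \ref{lm20} the elements $X_i,Y_i,h_i$ generate either $\mathfrak{sl}(2)$ (if $p(i)=0$) or $\mathfrak{osp}(1|2)$ (if $p(i)=1$), and $X_j$ generates a lowest-weight vector for this subalgebra of weight $-a_{ij}$ (i.e. $\operatorname{ad}_{Y_i}X_j=0$ in $\mathfrak{g}(A_{\{i,j\}})$, by the Chevalley relation $[Y_i,X_j]=0$ for $i\neq j$, and $[h_i,X_j]=a_{ij}X_j$). Local nilpotency of $\operatorname{ad}_{X_i}$ forces this lowest-weight module to be finite dimensional, hence to be a genuine finite-dimensional irreducible representation of $\mathfrak{sl}(2)$ or $\mathfrak{osp}(1|2)$. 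For $\mathfrak{sl}(2)$ this means $-a_{ij}\in\mathbb{Z}_{\geq 0}$; for $\mathfrak{osp}(1|2)$, whose finite-dimensional irreducibles have highest weights in $\mathbb{Z}_{\geq 0}$ measured against the coroot but the simple coroot $h_i$ is ``half'' the $\mathfrak{sl}(2)$-coroot, the weight $-a_{ij}$ must lie in $2\mathbb{Z}_{\geq 0}$ — i.e. $a_{ij}\in 2^{p(i)}\mathbb{Z}_{\leq 0}$, which is exactly (3). The main technical care here is getting the $\mathfrak{osp}(1|2)$ normalization right: I would pin it down by direct computation in the $2\times 2$ case $\begin{pmatrix}2 & a_{ij}\\ \ast & \ast\end{pmatrix}$ using the explicit $\mathfrak{osp}(1|2)$ bracket relations, checking when $\operatorname{ad}_{X_i}^{N}X_j=0$ for some $N$.

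For condition (4), suppose $a_{ij}=0$ but $a_{ji}\neq 0$, and suppose toward a contradiction that $a_{ii}=2$ (the only alternative to $a_{ii}=0$). Apply (3) with the roles of $i$ and $j$: since $a_{ij}=0$, $X_j$ commutes with $X_i$ and with $Y_i$, so $\alpha_j$ is a trivial weight for the node-$i$ subalgebra, which is fine; but now consider the node-$j$ subalgebra acting on $X_i$. If $a_{jj}=2$ as well, then (3) applied to the pair $(j,i)$ forces $a_{ji}\in 2^{p(j)}\mathbb{Z}_{\leq 0}$, so $a_{ji}\neq 0$ is consistent — so instead I would run the argument through the mixed generators: form $[X_j,X_i]$, which is nonzero (its bracket with $Y_j$ gives $a_{ji}X_i\neq 0$), and then observe that $\operatorname{ad}_{Y_i}[X_j,X_i]=[X_j,[Y_i,X_i]]\pm[[Y_i,X_j],X_i]$; the first term is $[X_j,h_i]=-a_{ij}X_j=0$ and the second vanishes since $[Y_i,X_j]=0$, so $[X_j,X_i]$ is again a lowest-weight vector for the node-$i$ subalgebra, now of weight $\alpha_j(h_i)+\alpha_i(h_i)=0+2=2$, and repeatedly applying $\operatorname{ad}_{X_i}$ produces an $(a_{ii}=2)$-string that cannot terminate because the chain also fails to close under $\operatorname{ad}_{Y_i}$ in the presence of the asymmetry $a_{ji}\neq 0=a_{ij}$; tracing the super-Jacobi identity shows $\operatorname{ad}_{X_i}^{m}[X_j,X_i]\neq 0$ for all $m$, contradicting integrability. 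Hence $a_{ii}=0$, which is (4). I expect this last step — bookkeeping the non-vanishing of the iterated brackets in the non-symmetrizable asymmetric case — to be the main obstacle, and I would handle it by reducing to the explicit $2\times 2$ matrix $\begin{pmatrix}2 & 0\\ a_{ji} & a_{jj}\end{pmatrix}$, listing the finitely many parity/diagonal possibilities for $(p(i),p(j),a_{jj})$, and computing $\mathfrak{g}(A_{\{i,j\}})$ in each to see it has infinite growth unless $a_{ii}=0$.
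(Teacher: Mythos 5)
The paper states Lemma~\ref{lm23} without proof (it is imported from \cite{HS07}), so there is no in-paper argument to compare against; judged on its own, your proposal is essentially correct and follows the expected route: reduce to the rank-one and rank-two subalgebras via Lemma~\ref{lm20} and apply $\mathfrak{sl}(2)$-, $\mathfrak{osp}(1|2)$- and Heisenberg-type weight-string arguments. Two points worth tightening: in (2) it is the no-zero-rows hypothesis, not indecomposability, that supplies a $j\neq i$ with $a_{ij}\neq 0$, and your chain argument genuinely needs $a_{ij}\neq 0$ (so that $\operatorname{ad}_{h_i}$ acts by the nonzero constant $a_{ij}$ on every term) rather than merely $a_{ji}\neq 0$; and in (4) the clean reason the string cannot terminate is not the ``asymmetry'' per se but simply that $[X_j,X_i]$ is a nonzero vector killed by $\operatorname{ad}_{Y_i}$ with positive $h_i$-weight $a_{ij}+a_{ii}=2$, so the standard $\mathfrak{sl}(2)$ (or even-part-of-$\mathfrak{osp}(1|2)$) string identity forces $\operatorname{ad}_{X_i}^{m}[X_j,X_i]\neq 0$ for all $m$ --- the hypothesis $a_{ji}\neq 0$ enters only to guarantee $[X_j,X_i]\neq 0$, which you correctly extract from $[Y_j,[X_j,X_i]]=\pm a_{ji}X_i$.
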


A matrix $A$ is called a {\em generalized Cartan matrix} if it satisfies conditions 1-3, together with
\begin{equation*} 4'.\ \text{ if } a_{ij}=0\text{, then }a_{ji}=0. \end{equation*}
Condition $4'$ is equivalent to the condition that $\mathfrak{g}(A)$ has no real singular isotropic roots.

\section{Finite growth}\ \indent
A superalgebra $\mathfrak{g}=\mathfrak{g}(A)$ has a natural $\mathbb{Z}$-grading $\mathfrak{g}=\oplus\mathfrak{g}_{m}$, called the {\em principal grading}, which is defined by  $\mathfrak{g}_{0}:=\mathfrak{h} $ and $\mathfrak{g}_{1}:=\mathfrak{g}_{\alpha_{1}}\oplus\dots \oplus\mathfrak{g}_{\alpha_{n}} $, where $\Pi=\{\alpha_1,\ldots,\alpha_n\}$.  We say that ${\mathfrak g} $ is of {\em finite growth} if $ \dim\mathfrak{g}_{m} $ grows polynomially depending on $m$. This means that the Gelfand-Kirillov dimension of $\mathfrak{g}$ is finite.

The following lemma shows that the notion of finite growth does not depend on the choice of a reflected base \cite{HS07}.

\begin{lemma} \label{lm2}  Let $ \Pi' $ be obtained from $ \Pi $ by an odd reflection. Then $ \dim
{\mathfrak g}'_{m}\leq\dim  {\mathfrak g}_{-2m}\oplus\dots \oplus{\mathfrak g}_{2m} $. In particular, if $ {\mathfrak
g}\left(A\right) $ is of finite growth, then $ {\mathfrak g}\left(A'\right) $ is of finite growth.
\end{lemma}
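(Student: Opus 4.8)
The plan is to track what happens to a single graded piece $\mathfrak{g}_m$ under an odd reflection at a simple isotropic root $\alpha_k$, and to bound $\dim\mathfrak{g}'_m$ in terms of a bounded window of the original grading. First I would fix the setup: $\Pi' = r_k(\Pi)$ is obtained from $\Pi$ by the odd reflection at $\alpha_k$, and by the description of odd reflections recorded above we have $\alpha'_i = \alpha_i + c_i\alpha_k$ with $c_i \in \{0,1\}$, while $\alpha'_k = -\alpha_k$. So each $\alpha'_i$ is a linear combination of the $\alpha_j$ with coefficients in $\{-1,0,1\}$, and conversely each $\alpha_j$ ($j\neq k$) equals $\alpha'_j - c_j\alpha'_k$ while $\alpha_k = -\alpha'_k$; hence the two $\mathbb{Z}$-gradings (principal for $\Pi$ versus principal for $\Pi'$) are both coarsenings of the common $\mathbb{Z}^n$-grading by the root lattice, and the $\Pi'$-degree of a root $\beta = \sum n_i\alpha_i$ is $\sum_{i\neq k} n_i$ plus a correction from $n_k$. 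The key combinatorial point I would extract is: if $\beta$ has $\Pi'$-principal degree $m$ (so $\mathfrak{g}_\beta \subseteq \mathfrak{g}'_m$, using $\mathfrak{g}(A') \cong \mathfrak{g}(A)$), then the $\Pi$-principal degree of $\beta$ lies in the interval $[-2|m|, 2|m|]$, roughly because passing between the two height functions changes the value by at most a factor controlled by the $\{0,1\}$ coefficients $c_i$ together with the one reflected coordinate.

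Next I would make the degree-comparison precise. Write the $\Pi$-height $\operatorname{ht}(\beta) = \sum n_i$ and the $\Pi'$-height $\operatorname{ht}'(\beta) = \sum n'_i$ where $\beta = \sum n'_i\alpha'_i$. From $\alpha'_i = \alpha_i + c_i\alpha_k$ ($i\neq k$) and $\alpha'_k = -\alpha_k$ one solves $n_i = n'_i$ for $i\neq k$ and $n_k = -n'_k + \sum_{i\neq k} c_i n'_i$. Since every root is a sign-definite combination of simple roots, for a root $\beta$ of $\Pi'$-height $m$ all $n'_i$ have the same sign and $\sum_i |n'_i| = |m|$; therefore $|n_k| \le |n'_k| + \sum_{i\neq k}|n'_i| = |m|$, and $\operatorname{ht}(\beta) = \sum_{i\neq k} n_i + n_k$ satisfies $|\operatorname{ht}(\beta)| \le \sum_{i\neq k}|n'_i| + |m| \le 2|m|$. (I would double-check the boundary case $m=0$, where $\mathfrak{g}'_0 = \mathfrak{h} = \mathfrak{g}_0$ is immediate.) Consequently $\mathfrak{g}'_m$, viewed inside $\mathfrak{g}(A)$ via the isomorphism $\mathfrak{g}(A')\cong\mathfrak{g}(A)$, is contained in the span of those root spaces $\mathfrak{g}_\beta$ with $\Pi$-height in $\{-2|m|,\dots,2|m|\}$, which is exactly $\mathfrak{g}_{-2m}\oplus\cdots\oplus\mathfrak{g}_{2m}$. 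Taking dimensions gives $\dim\mathfrak{g}'_m \le \dim(\mathfrak{g}_{-2m}\oplus\cdots\oplus\mathfrak{g}_{2m})$, which is the stated inequality. The "in particular" clause is then immediate: if $\dim\mathfrak{g}_m \le C(1+|m|)^d$ for all $m$, then $\dim\mathfrak{g}'_m \le (4|m|+1)\cdot C(1+2|m|)^d$, still polynomial in $m$, so $\mathfrak{g}(A')$ has finite growth.

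The main obstacle I expect is the bookkeeping in identifying $\mathfrak{g}(A')$ with $\mathfrak{g}(A)$ correctly at the level of root spaces: the odd reflection produces $X'_i, Y'_i$ that generate $\mathfrak{g}(A)$ only when $\alpha_k$ is regular (as the preceding lemma notes), and even then one must be careful that the isomorphism $\mathfrak{g}(A)\cong\mathfrak{g}(A')$ is the identity on $\mathfrak{h}$ and carries $\mathfrak{g}_\beta$ (a $\Pi'$-root space) to the root space for the corresponding linear functional, so that the $\Pi'$-principal grading really is the grading by $\operatorname{ht}'$ on the fixed underlying algebra. If $\alpha_k$ is singular the reflected generators span only a proper subalgebra, but that subalgebra still has its own principal grading whose pieces inject into $\mathfrak{g}(A)$, and the same height estimate applies to each $\beta$ occurring, so the inequality $\dim\mathfrak{g}'_m \le \dim(\mathfrak{g}_{-2m}\oplus\cdots\oplus\mathfrak{g}_{2m})$ still holds — indeed it only becomes easier. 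The rest is the elementary interval arithmetic above, which I would write out in a couple of lines.
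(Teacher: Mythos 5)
The paper states this lemma without proof, citing \cite{HS07}, and the argument there is precisely the height-comparison you give: expressing a root in the two bases, noting the coefficients are sign-definite, and bounding $|\operatorname{ht}(\beta)|\le 2|\operatorname{ht}'(\beta)|$. Your proposal is correct and matches that approach, including the correct handling of the singular case where the reflected generators only span a proper subalgebra.
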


The following lemma is a straightforward corollary of Lemma~\ref{lm20}.

\begin{lemma} \label{lm21}  If $ {\mathfrak g}\left(A\right) $ is of finite growth,
then for any subset $ J \subseteq I $ the Lie superalgebra $ {\mathfrak g}\left(A_{J}\right) $ is of finite growth.

\end{lemma}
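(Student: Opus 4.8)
Lemma~\ref{lm21}: if $\mathfrak g(A)$ has finite growth, then for any $J\subseteq I$ the Lie superalgebra $\mathfrak g(A_J)$ has finite growth.

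The plan is to deduce this directly from Lemma~\ref{lm20}, which identifies the subalgebra $\mathfrak a_J\subseteq\mathfrak g(A)$ generated by $\mathfrak h$ together with the $X_i,Y_i$ for $i\in J$ as $\mathfrak a_J\cong\mathfrak h'\oplus\mathfrak g(A_J)$ for a suitable subspace $\mathfrak h'\subseteq\mathfrak h$. First I would observe that the principal $\mathbb Z$-grading on $\mathfrak g(A)$ restricts to a grading on $\mathfrak a_J$: each generator $X_i$ ($i\in J$) lies in degree $1$, each $Y_i$ ($i\in J$) in degree $-1$, and $\mathfrak h$ in degree $0$, so $\mathfrak a_J=\bigoplus_m (\mathfrak a_J)_m$ with $(\mathfrak a_J)_m\subseteq\mathfrak g_m$, hence $\dim(\mathfrak a_J)_m\le\dim\mathfrak g_m$ for every $m$. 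Next, under the isomorphism of Lemma~\ref{lm20} the summand $\mathfrak h'\oplus\mathfrak g(A_J)$ carries its own principal grading, with $\mathfrak h'$ and $\mathfrak h_{A_J}$ sitting in degree $0$ and the Chevalley generators of $\mathfrak g(A_J)$ in degrees $\pm1$; this grading must coincide with the one induced from $\mathfrak g(A)$ because the two gradings agree on the generators $X_i,Y_i$ ($i\in J$) and on $\mathfrak h$, and these generate $\mathfrak a_J$.

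Consequently, for $m\neq 0$ we get $\dim\mathfrak g(A_J)_m=\dim(\mathfrak a_J)_m\le\dim\mathfrak g_m$, and for $m=0$ the space $\mathfrak g(A_J)_0$ is the (finite-dimensional) Cartan subalgebra of $\mathfrak g(A_J)$. Since $\dim\mathfrak g_m$ grows polynomially in $|m|$ by hypothesis, so does $\dim\mathfrak g(A_J)_m$, which is exactly the assertion that $\mathfrak g(A_J)$ has finite growth.

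I do not expect any serious obstacle here; the one point that needs a word of care is the comparison of gradings in the previous paragraph — one must make sure that the principal grading of $\mathfrak g(A_J)$ is the restriction of the principal grading of $\mathfrak g(A)$ under the embedding $\mathfrak g(A_J)\hookrightarrow\mathfrak a_J\subseteq\mathfrak g(A)$, rather than some coarser or shifted grading. This follows because both gradings are determined by their values on the Chevalley generators and on the Cartan part, and these values match by construction of the isomorphism in Lemma~\ref{lm20}; the extra summand $\mathfrak h'$ lies entirely in degree $0$ and therefore does not affect the graded dimensions in any nonzero degree. Hence the lemma is indeed a straightforward corollary of Lemma~\ref{lm20}.
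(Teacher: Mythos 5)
Your argument is correct and is exactly the route the paper takes: the paper gives no written proof beyond remarking that the lemma ``is a straightforward corollary of Lemma~\ref{lm20},'' and your write-up simply supplies the details of that corollary (embedding $\mathfrak{g}(A_J)$ via $\mathfrak{a}_J$ and comparing principal gradings). The grading comparison you flag is handled correctly, so nothing is missing.
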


\newpage

\begin{theorem}[Hoyt, Serganova \cite{HS07}] \label{th1}  Suppose $A$ is a matrix with no zero rows. If $ {\mathfrak g}\left(A\right) $ has finite growth, the $\mathfrak{g}(A)$ is integrable.
\end{theorem}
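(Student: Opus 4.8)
The plan is to prove the contrapositive: if $A$ has no zero rows and $\mathfrak{g}(A)$ is \emph{not} integrable, then $\mathfrak{g}(A)$ has infinite growth.

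\emph{Reduction to rank two.} For a fixed $i\in I$ the set $L_i:=\{v\in\mathfrak{g}(A):(\operatorname{ad}_{X_i})^{N}v=0\text{ for some }N\}$ is a graded subalgebra, by the super-Leibniz rule for the derivation $\operatorname{ad}_{X_i}$; inspecting the four possibilities for a simple root shows that $\mathfrak{h}$, $X_i$, $Y_i$ and every $Y_j$ with $j\neq i$ already lie in $L_i$. Hence $\operatorname{ad}_{X_i}$ is locally nilpotent on $\mathfrak{g}(A)$ if and only if $(\operatorname{ad}_{X_i})^{N}X_j=0$ for some $N$ for every $j\neq i$, so non-integrability yields a pair $i\neq j$ with $(\operatorname{ad}_{X_i})^{m}X_j\neq0$ for all $m$. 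Moreover, if $a_{ii}=0$ and $p(i)=1$ then $[X_i,X_i]=0$ forces $(\operatorname{ad}_{X_i})^{2}=0$ identically, so such an index $i$ necessarily satisfies $a_{ii}=2$, or $a_{ii}=0$ with $p(i)=0$. Passing to the subalgebra $\mathfrak{a}_{\{i,j\}}\cong\mathfrak{h}'\oplus\mathfrak{g}(A_{\{i,j\}})$ of Lemma~\ref{lm20}, in which brackets are computed as in $\mathfrak{g}(A)$, the rank-two superalgebra $\mathfrak{g}(B)$ with $B:=A_{\{i,j\}}$ is still non-integrable, and by Lemma~\ref{lm21} it has finite growth whenever $\mathfrak{g}(A)$ does. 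Discarding the decomposable case (where $\mathfrak{g}(B)$ splits into integrable rank-one summands), it suffices to prove: \emph{if $B$ is an indecomposable $2\times2$ matrix with $(\operatorname{ad}_{X_1})^{m}X_2\neq0$ for all $m$, then $\mathfrak{g}(B)$ has infinite growth.}

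\emph{An auxiliary grading.} Grade $\mathfrak{g}(B)=\bigoplus_{k\in\mathbb{Z}}\mathfrak{g}^{[k]}$ by the $\alpha_2$-coefficient, so that $X_2$ has degree $1$ and $X_1,Y_1,\mathfrak{h}$ have degree $0$. By Lemma~\ref{lm20}, $\mathfrak{g}^{[0]}=\mathfrak{h}''\oplus\mathfrak{s}$ with $\mathfrak{s}=\langle X_1,Y_1,h_1\rangle$ equal to $\mathfrak{sl}(2)$, $\mathfrak{osp}(1|2)$ or the Heisenberg algebra (the case $\mathfrak{sl}(1|1)$ being excluded by the previous step); the subalgebra $\bigoplus_{k\geq1}\mathfrak{g}^{[k]}$ is generated by $\mathfrak{g}^{[1]}$, and $\mathfrak{g}^{[k]}=[\mathfrak{g}^{[1]},\mathfrak{g}^{[k-1]}]$ for $k\geq1$. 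Since $[Y_1,X_2]=0$, the vectors $v_s:=(\operatorname{ad}_{X_1})^{s}X_2\in\mathfrak{g}_{s\alpha_1+\alpha_2}$ span $\mathfrak{g}^{[1]}$ and form a lowest-weight $\mathfrak{s}$-module generated by $X_2=v_0$; by hypothesis all $v_s$ are nonzero, so $\mathfrak{g}^{[1]}$ is the infinite-dimensional Verma-type module $M^{-}_{\mathfrak{s}}(\alpha_2(h_1))$, with one-dimensional weight spaces $\mathbb{C}v_s$, $s\geq0$.

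\emph{Growth estimate, and the main obstacle.} From $\mathfrak{g}^{[k]}=[\mathfrak{g}^{[1]},\mathfrak{g}^{[k-1]}]$ one sees that $\mathfrak{g}_{s\alpha_1+k\alpha_2}$ is spanned by $(k-1)$-fold brackets of the $v_j$, so $\dim\mathfrak{g}_{s\alpha_1+k\alpha_2}\leq C_k\,s^{k-1}$; the crux is to prove a matching lower bound $\dim\mathfrak{g}_{s\alpha_1+k\alpha_2}\geq c_k\,s^{k-1}$ for $s$ large, since by $\dim\mathfrak{g}_m=\sum_{s+k=m}\dim\mathfrak{g}_{s\alpha_1+k\alpha_2}$ this forces $\dim\mathfrak{g}_m\gtrsim e^{cm}$, contradicting finite growth. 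For the lower bound one must control the defining ideal $r(B)$, i.e.\ show that enough iterated brackets of the $v_j$ survive in $\mathfrak{g}(B)=\widetilde{\mathfrak{g}}(B)/r(B)$ and remain linearly independent. The natural tool is to detect nonvanishing by applying lowering operators: an element $w\in\mathfrak{g}_{s\alpha_1+k\alpha_2}$ is not in $r(B)$ as soon as some $\operatorname{ad}_{Y_{i_1}}\cdots\operatorname{ad}_{Y_{i_\ell}}w$ is a nonzero multiple of one of the $v_m$ (hence, after further lowering, meets $\mathfrak{h}$ nontrivially) -- for instance $\operatorname{ad}_{Y_2}[v_1,v_2]$ is, up to a generically nonzero scalar, the nonzero vector $v_3$, so already $\mathfrak{g}^{[2]}\neq0$. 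I expect the inductive construction -- producing inside each $\mathfrak{g}^{[k]}$ a full Verma-type $\mathfrak{s}$-submodule by such ``pumping,'' and separating candidate basis vectors by suitable lowering operators -- to be the main difficulty, precisely because $B$ is typically non-symmetrizable, so the usual character-formula and invariant-form arguments are unavailable. The $\mathfrak{osp}(1|2)$ and Heisenberg cases for $\mathfrak{s}$ should go through by the same scheme, with the appropriate cyclic modules in place of $\mathfrak{sl}(2)$-Verma modules.
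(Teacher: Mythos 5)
The paper itself gives no proof of Theorem~\ref{th1}; it is quoted from \cite{HS07}, so there is no in-text argument to compare against line by line. Your overall strategy --- pass to the contrapositive, use the super-Leibniz rule to show that the locally $\operatorname{ad}_{X_i}$-nilpotent vectors form a subalgebra containing $\mathfrak{h}$, $X_i$, $Y_i$ and all $Y_j$, locate a bad pair $(i,j)$, and cut down to the rank-two subalgebra $\mathfrak{a}_{\{i,j\}}$ via Lemma~\ref{lm20} and Lemma~\ref{lm21} --- is sound and is the expected route; so is the observation that the string $v_s=(\operatorname{ad}_{X_1})^s X_2$ spans a Verma-type lowest-weight module over the rank-one subalgebra $\mathfrak{s}$, and that an $\mathfrak{sl}(1|1)$-type vertex cannot be the offending one. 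This reduction also correctly isolates where the hypothesis of no zero rows enters (a Heisenberg-type vertex with zero row is harmlessly nilpotent).

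The genuine gap is exactly where you flag it, and it is not a technicality: it is the entire content of the theorem. Knowing that all $v_s$ are nonzero only gives $\dim\mathfrak{g}_{s\alpha_1+\alpha_2}=1$ for all $s$, i.e.\ $\dim\mathfrak{g}_m\geq 1$, which is perfectly consistent with finite (even linear) growth. Everything therefore hinges on producing, for each $k\geq 2$ (or even just for enough $k$), many linearly independent survivors among the iterated brackets of the $v_j$ in $\mathfrak{g}_{s\alpha_1+k\alpha_2}=\widetilde{\mathfrak{g}}_{s\alpha_1+k\alpha_2}/r(B)$, and this is precisely what you defer with ``I expect the inductive construction\dots to be the main difficulty.'' Note moreover that the detection method you propose is weaker than you suggest: for $a,b\geq 3$ one computes $[Y_2,v_a]=0$ (since $(\operatorname{ad}_{X_1})^a h_2$ vanishes for $a\geq 2$ unless $\mathfrak{s}\cong\mathfrak{osp}(1|2)$), so $\operatorname{ad}_{Y_2}$ kills most of the candidate spanning set of $\mathfrak{g}^{[2]}$ and the burden falls almost entirely on iterating $\operatorname{ad}_{Y_1}$, whose iterated image in $W_{1}$ is small; showing that the kernel of the full lowering procedure is not large is a real computation, not a routine verification. (Also, your final inference to $e^{cm}$ needs uniform control of the constants $c_k$; fortunately super-polynomial growth, which follows from $c_k>0$ for every fixed $k$, already contradicts finite growth, so exponential growth is not needed.) As written, the proposal is a correct reduction plus an accurate diagnosis of the hard step, but not a proof.
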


\begin{theorem}[Hoyt, Serganova \cite{HS07}]\label{admiss}  Suppose that $A$ is an indecomposable matrix with no zero rows, and that $ {\mathfrak g}\left(A\right) $ has finite growth. Then $A$ and each matrix $A'$ obtained from $A$ by a sequence of odd reflections (singular or regular) satisfies the conditions of Lemma~\ref{lm23}, (after rescaling so that $a_{ii}\in\{0,2\}$ for $i\in I$).
\end{theorem}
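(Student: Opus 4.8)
The plan is to argue by induction on the length $\ell$ of the sequence of odd reflections that produces $A'$ from $A$, maintaining two properties along the way: that $\mathfrak g(A')$ has finite growth, and that $A'$ has no zero rows. I would first record why these two properties already yield the conclusion. Granting them, decompose $A'$ into indecomposable blocks $A'_{J_1},\dots,A'_{J_r}$; a zero row of a block would be a zero row of $A'$, so every block has no zero rows, and by Lemma~\ref{lm21} each $\mathfrak g(A'_{J_t})$ has finite growth. Theorem~\ref{th1} then makes each $\mathfrak g(A'_{J_t})$ integrable, and Lemma~\ref{lm23} (applied to the blocks of size at least two; a block of size one has a nonzero, hence after rescaling equal to $2$, diagonal entry and satisfies conditions 1--4 vacuously) shows that each block, after rescaling its rows so the diagonal lies in $\{0,2\}$, satisfies conditions 1--4. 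Since those conditions only ever refer to a single row or a single unordered pair of indices, and all entries linking distinct blocks vanish, validity on each block is validity for $A'$. For the base case $\ell=0$ the matrix $A$ is already indecomposable, so this is just Theorem~\ref{th1} followed by Lemma~\ref{lm23}; and the case $n=1$ is trivial, since then ``no zero row'' forces $A=(2)$ after rescaling and no odd reflection is possible.

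It remains to run the inductive step: if $B$ has no zero rows and $\mathfrak g(B)$ has finite growth, and $\tilde A$ is obtained from $B$ by a single odd reflection at an odd isotropic simple root $\alpha_k$ (so $b_{kk}=0$), then $\mathfrak g(\tilde A)$ has finite growth and $\tilde A$ has no zero rows. Finite growth of $\mathfrak g(\tilde A)$ is immediate from Lemma~\ref{lm2}. For the absence of zero rows I would argue by contradiction, substituting into the displayed formulas for the reflected entries $a'_{ij}$ and writing $i\sim k$ for ``$b_{ik}\neq 0$ or $b_{ki}\neq 0$''. Suppose some row $i$ of $\tilde A$ vanishes. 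If $i=k$, then $a'_{kj}=b_{kj}$ for $j\neq k$ and $a'_{kk}=b_{kk}=0$, so row $k$ of $B$ is zero. If $i\neq k$ and $i\not\sim k$, then $a'_{ik}=-b_{ki}b_{ik}=0$ automatically and $a'_{ij}=b_{ij}$ for $j\neq k$, so row $i$ of $B$ is zero. If $i\neq k$ and $i\sim k$, then $a'_{ik}=-b_{ki}b_{ik}$, so vanishing of this entry forces $b_{ik}=0$ or $b_{ki}=0$; in the first case ($b_{ki}\neq 0$) the formula collapses to $a'_{ij}=b_{ki}b_{ij}$ for all $j\neq k$ (the term $b_{ik}b_{kj}$ drops out), forcing row $i$ of $B$ to be zero, and in the second case ($b_{ik}\neq 0$) one gets $a'_{ij}=b_{ik}b_{kj}$ for $j\sim k$, $j\neq k$, and $a'_{ij}=0$ for $j\not\sim k$, $j\neq k$, which forces $b_{kj}=0$ for all $j$ with $j\sim k$ and hence, together with $b_{kk}=0$ and $b_{kj}=0$ for $j\not\sim k$, makes row $k$ of $B$ zero. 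Every case contradicts the hypothesis on $B$, so $\tilde A$ has no zero rows, closing the induction.

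The step I expect to be the main obstacle is exactly this persistence of the ``no zero row'' property. A singular odd reflection does not yield an isomorphic, or even a canonically related, Lie superalgebra, so one cannot transport structural information across it and is thrown back on the explicit entry formulas; the only thing available is the weak fact that $B$ has no zero rows, and the case analysis must be organized so that this alone suffices to rule out a zero row of $\tilde A$. A secondary wrinkle that must not be overlooked is that $A'$ can be decomposable even when $A$ is indecomposable, which is why one passes to indecomposable blocks and uses Lemma~\ref{lm21} before invoking Theorem~\ref{th1} and Lemma~\ref{lm23} at the end.
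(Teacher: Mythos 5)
Your proposal is correct and follows the same route as the paper: Lemma~\ref{lm2} to propagate finite growth to $\mathfrak{g}(A')$, then Theorem~\ref{th1} to get integrability, then Lemma~\ref{lm23} to extract the matrix conditions. The paper simply asserts that ``it is easy to show that $A'$ has no zero rows''; your case analysis on the reflected entries $a'_{ij}$ supplies exactly that verification (and your handling of possible decomposability of $A'$ via Lemma~\ref{lm21} is a legitimate detail the paper leaves implicit), so there is nothing to correct.
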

\begin{proof}
Suppose that $A$ has no zero rows, and let $A'$ be a matrix obtained from $A$ by a sequence of odd reflections.  It is easy to show that $A'$ has no zero rows.  Suppose $ {\mathfrak g}\left(A\right) $ has finite growth.  Then by Lemma~\ref{lm2}, $\mathfrak{g}(A')$ has finite growth.  Then by Theorem~\ref{th1}, $\mathfrak{g}(A)$ and $\mathfrak{g}(A')$ are integrable. Hence, by Lemma~\ref{lm23}, $A$ and $A'$ satisfy the matrix conditions of Lemma~\ref{lm23}.
\end{proof}

We call $A$ {\em admissible} if any matrix obtained from $A$ by a sequence of odd reflections satisfies the conditions of Lemma~\ref{lm23}.  By Theorem~\ref{admiss}, if $A$ is an indecomposable matrix with no zero rows and $ {\mathfrak g}\left(A\right) $ has finite growth, then $A$ is admissible.  Hence, it suffices to only consider admissible Cartan matrices in the classification of finite-growth contragredient Lie superalgebras.

\section{Main Theorem}
\begin{theorem}[Hoyt, Serganova \cite{HS07}] \label{171} Let $\mathfrak{ g} (A)$ be a contragredient Lie
superalgebra of finite growth, and suppose the matrix $A$ is indecomposable with no zero rows. Then either $A$ is
symmetrizable and $\mathfrak{g}(A)$ is isomorphic to an affine or finite dimensional Lie superalgebra classified in
\cite{L86,L89}, or it is $D(2,1,0)$, $\widehat{D}(2,1,0)$, $S(1,2,\alpha)$ or $q(n)^{(2)}$.  See Table 1 and Table 3.
\end{theorem}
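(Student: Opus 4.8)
## Proof proposal

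The plan is to reduce the classification to a finite combinatorial problem about admissible Dynkin diagrams, and then compare the resulting list with the known symmetrizable classification. By Theorem~\ref{admiss}, it suffices to classify the indecomposable admissible matrices $A$ (with no zero rows) such that $\mathfrak{g}(A)$ has finite growth; every such $A$, together with all matrices obtained from it by sequences of odd reflections, satisfies the conditions of Lemma~\ref{lm23}. So the first step is to analyze the purely combinatorial constraints: conditions 1--4 of Lemma~\ref{lm23} restrict the entries of each $A'$ heavily, and condition 4 in particular controls the singular isotropic vertices. I would first dispose of the rank-two case by hand (the $2\times 2$ admissible matrices are few, and the growth condition from Lemma~\ref{lm21}/\ref{lm2} applied to all reflected bases pins them down), establishing the base of an induction on $n$.

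For the inductive step I would use Lemma~\ref{lm21}: every principal rank-$(n-1)$ submatrix $A_J$ has finite growth, hence (being indecomposable or a sum of indecomposables) each indecomposable block of $A_J$ appears already in the lists for smaller rank — i.e., is finite-dimensional or affine in the symmetrizable case, or one of $D(2,1,0)$, $\widehat D(2,1,0)$, $S(1,2,\alpha)$, $q(n)^{(2)}$. The strategy is then: take a vertex $\alpha_k$ whose deletion leaves a connected finite-dimensional (not merely affine) diagram, so that $A$ is an "extension of a finite-type diagram by one node"; the finite-growth hypothesis forces this extended diagram to be affine-like. One must do this not just for $A$ but \emph{simultaneously} for every matrix $A'$ in the odd-reflection orbit, since admissibility is a condition on the whole orbit. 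This is exactly where the classification of which finite-type diagrams admit a finite-growth one-node extension becomes a finite check: for each finite-dimensional contragredient superalgebra $\mathfrak{g}(A_J)$ one enumerates the ways to add a node (choosing the new row/column entries subject to Lemma~\ref{lm23}) and tests finite growth, using that growth is preserved under all reflections (Lemma~\ref{lm2}) to cut the casework.

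The main obstacle — and the technical heart of the argument — is the non-symmetrizable case, because there the usual Kac--Moody machinery (invariant bilinear form, imaginary cone, the arguments of \cite{K90,K78,L86}) is unavailable. The key device is odd reflections: one shows that if $A$ is non-symmetrizable admissible of finite growth then its odd-reflection orbit is finite and every member still satisfies Lemma~\ref{lm23}, and then one tracks how non-symmetrizability propagates. A $3\times 3$ subconfiguration with $a_{ij}=0\neq a_{ji}$ at an isotropic vertex is the obstruction to symmetrizability, and one must show that the finite-growth constraint on all reflected bases forces such a configuration to sit inside one of the three exceptional families $D(2,1,0)$, $\widehat D(2,1,0)$, $S(1,2,\alpha)$, $q(n)^{(2)}$ — in particular that it cannot be enlarged to rank $\geq 4$ without destroying finite growth. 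Concretely, for each candidate non-symmetrizable diagram one finds, somewhere in its reflection orbit, a rank-two or rank-three submatrix violating the growth bound of Lemma~\ref{lm2}, unless the diagram is on the list.

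Once the admissible finite-growth matrices are enumerated, the final step is bookkeeping: the symmetrizable ones coincide, by inspection of Tables~1 and 3, with van de Leur's list \cite{L86,L89} of affine and finite-dimensional contragredient superalgebras, and the non-symmetrizable ones are precisely $D(2,1,0)$, $\widehat D(2,1,0)$, $S(1,2,\alpha)$, $q(n)^{(2)}$. Conversely each of these four does have finite growth — $S(1,2,\alpha)$ and $q(n)^{(2)}$ are realized as (twisted) affinizations, $D(2,1,0)$ is finite-dimensional, and $\widehat D(2,1,0)$ is its affinization — so the list is exactly right. I expect the write-up to be dominated by the diagram enumeration in the inductive step and by the rank-three non-symmetrizable analysis; everything else is an application of the lemmas and theorems already quoted.
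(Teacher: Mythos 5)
Your reduction to admissible matrices via Theorem~\ref{admiss} and your use of Lemmas~\ref{lm2} and~\ref{lm21} match the paper, but your main case division and, more importantly, your completeness argument diverge from the actual proof in a way that leaves a real gap. The paper does not organize the proof around symmetrizable versus non-symmetrizable; it splits into (1) the \emph{regular} case, where no reflected base has a singular isotropic root, so that $\mathfrak{g}(A)$ is a regular Kac--Moody superalgebra, and (2) the \emph{singular} case, which is handled by a separate argument and produces exactly $D(2,1,0)$, $\widehat{D}(2,1,0)$ and $S(1,2,0)$. Your identification of the configuration $a_{ij}=0\neq a_{ji}$ with ``the obstruction to symmetrizability'' conflates these two dichotomies: $q(n)^{(2)}$ and $S(1,2,\alpha)$ with $\alpha\notin\mathbb{Z}$ are non-symmetrizable yet regular, while the singular-case algebras need separate treatment precisely because odd reflection at a singular root is not invertible and the Weyl-groupoid/Kac--Moody machinery fails there.

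The genuine gap is your claim that the one-node extension step is ``a finite check'' and that completeness follows by locating, in each reflection orbit, a small submatrix violating finite growth. Conditions 1--4 of Lemma~\ref{lm23} force integrality of $a_{ij}$ only at vertices with $a_{ii}=2$; at an isotropic vertex the off-diagonal entries can a priori be arbitrary complex numbers (witness the continuous parameter in $S(1,2,\alpha)$), so your enumeration is not finite, and the violating submatrix you invoke is exactly what has to be produced. The paper's mechanism for termination is absent from your proposal: one first classifies regular Kac--Moody diagrams satisfying condition (\ref{gammaproperty}), and then uses integrable highest weight modules --- Lemma~\ref{M}, its corollary, and Theorems~\ref{p1} and~\ref{p2} via Corollary~\ref{lm123} --- to show that a diagram whose algebra has no nontrivial irreducible integrable highest weight module cannot be a proper subdiagram of any connected regular Kac--Moody diagram; this is what stops the induction on the number of vertices. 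Separately, the exclusion of $Q^{\pm}(m,n,t)$, which is regular Kac--Moody but not of finite growth, is done via principal roots and Corollary~\ref{cor1}, another ingredient your sketch does not supply.
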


The proof of this theorem is separated into two cases, namely (1) $A$ is admissible and $\mathfrak{g}(A)$ has no singular isotropic roots in any reflected base, and (2) $A$ is admissible and $\mathfrak{g}(A)$ has a singular isotropic root.  In the first case, $\mathfrak{g}(A)$ is a {\em regular Kac-Moody superalgebra}.

\section{Regular Kac-Moody superalgebras}\label{rkmdef}\ \indent
We call $\mathfrak{g}(A)$ a {\em regular Kac-Moody superalgebra} if $A$ and any matrix obtained from $A$ by a sequence of odd reflections is a generalized Cartan matrix.  Equivalently, these are the contragredient Lie superalgebras for which all real root vectors act locally nilpotently on the adjoint module and all real isotropic roots are regular.

We classify regular Kac-Moody superalgebras by classifying the corresponding Dynkin diagrams. We proceed by induction on the number of vertices.  If $\mathfrak{g}(A)$ does not have a simple isotropic root, then $\mathfrak{g}(A)$ is a regular Kac-Moody superalgebra if and only if $A$ is a generalized Cartan matrix.  So we may assume that $\mathfrak{g}(A)$ has a simple isotropic root. If $\mathfrak{g}(A)$ has finite type, then $\mathfrak{g}(A)$ is a regular Kac-Moody superalgebra.  First, we classify regular Kac-Moody diagrams $\Gamma_{A}$ which satisfy the additional condition:
\begin{equation}\label{gammaproperty}
\text{ for } \Gamma_{A} \text{ and any reflected diagram } \Gamma_{A'} \text{ all proper subdiagrams have finite type.}
 \end{equation}
Second, we prove that our list is complete using Corollary~\ref{lm123} and some direct computations.

\begin{theorem}[Hoyt, Serganova \cite{HS07}]\label{two}
The regular Kac-Moody superalgebras with two simple roots of which at least one is isotropic are $\mathfrak{sl}(1,2)$ and $\mathfrak{osp}(3,2)$.
\end{theorem}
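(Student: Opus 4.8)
The plan is to reduce to an explicit case analysis on the $2\times 2$ matrix $A$ and then feed it into the odd-reflection formulas of Section~4. Since $\mathfrak{g}(A)$ is a regular Kac-Moody superalgebra, $A$ itself is a generalized Cartan matrix, so I may normalize $a_{ii}\in\{0,2\}$, and (assuming $A$ indecomposable, as is implicit) GCM condition $4'$ forces $a_{12}\neq 0\neq a_{21}$. Since some simple root is isotropic, say $a_{11}=0$ and $p(\alpha_1)=1$, I can harmlessly rescale the first row to arrange $a_{12}=1$, reducing to $A=\left(\begin{smallmatrix}0&1\\ b&c\end{smallmatrix}\right)$ with $c\in\{0,2\}$ and $b\neq 0$.

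If $c=0$, then GCM condition~2 forces $p(\alpha_2)=1$ and, after rescaling the second row, $A=\left(\begin{smallmatrix}0&1\\ 1&0\end{smallmatrix}\right)$, the Cartan matrix of $\mathfrak{sl}(1,2)$. If $c=2$, GCM condition~3 forces $b$ to be a negative integer, and I would apply the odd reflection $r_1$ at $\alpha_1$ (regular, since $a_{12}\neq 0$): the Section~4 formulas give, before rescaling, $A'=\left(\begin{smallmatrix}0&1\\ -b&\,2(1+b)\end{smallmatrix}\right)$ with $\alpha'_2=\alpha_2+\alpha_1$, so $p(\alpha'_2)=p(\alpha_2)+1$. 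If $b=-1$, then $a'_{22}=0$ and, after rescaling, $A'=\left(\begin{smallmatrix}0&1\\ 1&0\end{smallmatrix}\right)$, hence $\mathfrak{g}(A)\cong\mathfrak{g}(A')\cong\mathfrak{sl}(1,2)$. If $b\leq -2$, then $a'_{22}\neq 0$; rescaling the second row of $A'$ so that this entry is $2$ turns its $(2,1)$-entry into $-b/(1+b)$, which must be a negative integer because $A'$ is again a GCM, and since $b$ and $b+1$ are coprime this forces $b=-2$. Then $A=\left(\begin{smallmatrix}0&1\\ -2&2\end{smallmatrix}\right)$, a Cartan matrix of $\mathfrak{osp}(3,2)$ (both parity choices for $\alpha_2$ are admissible and occur, being interchanged by $r_1$).

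For the converse I would check that $\mathfrak{sl}(1,2)$ and $\mathfrak{osp}(3,2)$ really are regular Kac-Moody superalgebras: each reaches only finitely many Cartan matrices under sequences of odd reflections, and every one of them is a GCM (alternatively, both algebras are finite dimensional and have no singular isotropic root in any base). That the matrices produced above are those of $\mathfrak{sl}(1,2)$ and $\mathfrak{osp}(3,2)$ follows from the rank-two part of Kac's classification \cite{K77}, or from writing down the root systems directly. The one genuinely delicate point is the step where $A'$ must itself be a generalized Cartan matrix — this is precisely where the hypothesis "regular Kac-Moody" is used, as opposed to merely "$A$ is a GCM", and it is what collapses the a priori infinite family $b\in\mathbb{Z}_{<0}$ down to $b\in\{-1,-2\}$; everything else is routine bookkeeping with the row rescalings and with the parities under $r_1$.
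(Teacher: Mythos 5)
Your proposal is correct and follows essentially the same route as the paper: normalize $A$ so that the isotropic row has off-diagonal entry $1$, reflect at the isotropic vertex, and use that the reflected matrix must again be a generalized Cartan matrix to force the off-diagonal entry into $\{-1,-2\}$ --- exactly the paper's condition $a,\ \tfrac{-a}{a+1}\in\mathbb{Z}_{<0}$. You merely spell out the bookkeeping (the case where both roots are isotropic, the parity flip under $r_1$, the coprimality step) that the paper's terse proof leaves implicit.
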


\begin{proof}
If $a\neq -1$, then by reflecting at $v_{1}$, we have the conditions $a, \frac{-a}{a+1} \in {\mathbb Z}_{<0}$. This implies $a=-2$.  Hence, this is a Dynkin diagram for $\mathfrak{osp}(3,2)$.
\begin{equation*}
    \xymatrix{ \OX \AW[r]^{1}_{a} & \OD}
    \text{\hspace{.5cm}} \overrightarrow{r_{1}} \text{\hspace{.5cm}}
    \xymatrix{ \OX \AW[r]^{1}_{- \frac{a}{1+a}} & \OD}
\end{equation*}
If $a=-1$, then this is a Dynkin diagram for $\mathfrak{sl}(1,2)$.  By reflecting at $v_{1}$ we have the following.
\begin{equation*}
    \xymatrix{ \OX \ar@<.5ex>[r]^{1} & \O \ar@<.5ex>[l]^{-1}}
    \text{\hspace{.5cm}} \overrightarrow{r_{1}} \text{\hspace{.5cm}}
    \xymatrix{ \OX \ar@<.5ex>[r]^{1} & \OX \ar@<.5ex>[l]^{1}}
\end{equation*}
\end{proof}

\newpage
\section{Classification of regular Kac-Moody superalgebras}

\begin{theorem}[Hoyt \cite{H07, H08}]\label{rkmtheorem} Let $\mathfrak{g}(A)$ be a contragredient Lie superalgebra.
\begin{enumerate}
\item
If $A$ is a symmetrizable matrix and $\mathfrak{g}(A)$ has a simple isotropic root, then $\mathfrak{g}(A)$ is regular Kac-Moody if and only if it has finite growth.
\item
If $A$ is a non-symmetrizable matrix and $\mathfrak{g}(A)$ has a simple isotropic root, then $\mathfrak{g}(A)$ is regular
Kac-Moody if and only if it is  $q(n)^{(2)}$, $S(1,2,\alpha)$ or $Q^{\pm}(m,n,t)$.  See Table 1 and Table 2.\\
\end{enumerate}
\begin{equation*}\doublespacing \begin{tabular}{|c|cc|}
  \hline
  \bf{Algebra} & \bf{Dynkin diagrams} & Table 1 \\
\hline &&\\
$q(n)^{(2)}$ & $\begin{array}{c}\xymatrix{ & & \bullet \ar@<.5ex>[lld]^{a} \ar@<.5ex>[rrd]^{b} & & \\
\bullet \ar@<.5ex>[rru] \ar[r] & \bullet \ar[l] \ar[r] & \cdots \ar[l] \ar[r] & \bullet \ar[l] \ar[r]
& \bullet \ar[l] \ar@<.5ex>[llu] }\end{array}$ & $\begin{array}{c} \text{There are n $\bullet$.} \\
\text{Each $\bullet$ is either $\bigcirc$ or $\bigotimes$.} \\ \text{An odd number of them are
$\bigotimes$.} \\
\text{If $\bullet$ is $\bigcirc$,
 then $a=b=-1$.} \\
\text{If $\bullet$ is $\bigotimes$, then $\frac{a}{b}=-1$.} \\
 \end{array}$
\\ &&\\
\hline &&\\
$S(1,2,\alpha)$ &
$\begin{array}{c}\xymatrix{& \bigotimes \ar@<.5ex>[rdd]^{1-\alpha} \ar@<.5ex>[ldd]^{\alpha} & \\
& & \\ \bigotimes \ar@<.5ex>[ruu]^{\alpha} \ar@<.5ex>[rr]^{-1-\alpha} & &
\bigcirc \ar@<.5ex>[ll]^{-1} \ar@<.5ex>[luu]^{-1}}\end{array}$ & $\alpha \neq 0, \text{  }\alpha \in \mathbb{C} \setminus \mathbb{Z}$\\ &&\\
 \hline
 \end{tabular}\end{equation*}\

 \begin{equation*}\doublespacing \begin{tabular}{|c|cc|}
  \hline
  \bf{Algebra} & \bf{Dynkin diagrams} & Table 2  \\
\hline &&\\
$Q^{\pm}(m,n,t)$ & $\begin{array}{c}\xymatrix{& \bigotimes \ar@<.5ex>[rdd]^{b} \ar@<.5ex>[ldd]^{1} & \\
& & \\ \bigotimes \ar@<.5ex>[ruu]^{c} \ar@<.5ex>[rr]^{1} & & \bigotimes \ar@<.5ex>[ll]^{a} \ar@<.5ex>[luu]^{1}}\end{array}$ \text{
}
$\begin{array}{c}\xymatrix{& \bigotimes \ar@<.5ex>[rdd]^{b} \ar@<.5ex>[ldd]^{1} & \\
& & \\ \bigcirc \ar@<.5ex>[ruu]^{-1} \ar@<.5ex>[rr]^{1+b+\frac{1}{c}} & &
\bigcirc \ar@<.5ex>[ll]^{1+a+\frac{1}{b}} \ar@<.5ex>[luu]^{-1}}\end{array}$
& $\begin{array}{l}
1+a+\frac{1}{b} = m \\
1+b+\frac{1}{c} = n \\
1+c+\frac{1}{a} = t \\
\end{array}$ \\
 & $\begin{array}{c}\xymatrix{& \bigotimes \ar@<.5ex>[rdd]^{c} \ar@<.5ex>[ldd]^{1} & \\
& & \\ \bigcirc \ar@<.5ex>[ruu]^{-1} \ar@<.5ex>[rr]^{1+c+\frac{1}{a}} & & \bigcirc \ar@<.5ex>[ll]^{1+b+\frac{1}{c}}
\ar@<.5ex>[luu]^{-1}}\end{array}$ \text{  }
$\begin{array}{c}\xymatrix{& \bigotimes \ar@<.5ex>[rdd]^{a} \ar@<.5ex>[ldd]^{1} & \\
& & \\ \bigcirc \ar@<.5ex>[ruu]^{-1} \ar@<.5ex>[rr]^{1+a+\frac{1}{b}} & &
\bigcirc \ar@<.5ex>[ll]^{1+c+\frac{1}{a}} \ar@<.5ex>[luu]^{-1}}\end{array}$ &
 $\begin{array}{l}
 m,n,t \in \mathbb{Z}_{\leq -1} \text{ and}\\
 \text{not all equal to }-1. \\
 \end{array}$ \\ &&\\
\hline
\end{tabular}
\end{equation*}
\end{theorem}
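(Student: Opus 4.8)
The plan is to first obtain a complete list of the regular Kac-Moody superalgebras $\mathfrak{g}(A)$ with $A$ indecomposable and $\mathfrak{g}(A)$ possessing a simple isotropic root, and then to compare this list with van de Leur's classification \cite{L86,L89} of symmetrizable finite-growth contragredient Lie superalgebras. We may assume $A$ is indecomposable (otherwise $\mathfrak{g}(A)$ is a direct sum and each summand is handled separately) and that $\mathfrak{g}(A)$ has a simple isotropic root (otherwise $A$ is a generalized Cartan matrix and there is nothing to prove). I would argue by induction on the number $n$ of simple roots. The cases $n=1$ (only $\mathfrak{sl}(1|1)$) and $n=2$ are the base cases; the latter is precisely Theorem~\ref{two}, giving $\mathfrak{sl}(1,2)$ and $\mathfrak{osp}(3,2)$. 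The stability property underlying the induction is that every subdiagram of a regular Kac-Moody diagram is again regular Kac-Moody: by Lemma~\ref{lm20} the subalgebra attached to $J\subseteq I$ is $\mathfrak{h}'\oplus\mathfrak{g}(A_J)$, and a sequence of odd reflections of $A_J$ agrees with the corresponding sequence of odd reflections of $A$ restricted to $J$, so if every odd reflection of $A$ is a generalized Cartan matrix then so is every odd reflection of $A_J$.

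For the inductive step I would first classify the regular Kac-Moody diagrams satisfying condition~\eqref{gammaproperty}, i.e. those that are not of finite type but for which $\Gamma_A$ and every reflected diagram $\Gamma_{A'}$ have all proper subdiagrams of finite type. Such a diagram is, in every reflected base, obtained from a finite-type diagram by adjoining a single vertex. Since the finite-type contragredient Lie superalgebras and their Cartan matrices are known from Kac~\cite{K77}, and the regular Kac-Moody requirement forces conditions~$1$--$3$ and $4'$ of Lemma~\ref{lm23} to persist after every sequence of odd reflections, the two-node subdiagrams containing the new isotropic vertex are constrained by Theorem~\ref{two} and the remaining bonds by the generalized-Cartan-matrix inequalities; only finitely many extensions survive. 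Enumerating them is the bulk of the work but a bounded computation, and it yields exactly van de Leur's affine symmetrizable diagrams together with the non-symmetrizable families $q(n)^{(2)}$, $S(1,2,\alpha)$, and the rank-three diagrams $Q^{\pm}(m,n,t)$.

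To complete the classification I would show that for $n\geq 3$ every regular Kac-Moody diagram with a simple isotropic root satisfies~\eqref{gammaproperty}. If not, then in some reflected base $\Gamma_{A'}$ there is a proper connected subdiagram which is not of finite type; by the stability property it is regular Kac-Moody, so by the inductive hypothesis it is one of the diagrams found above, and hence $\Gamma_{A'}$ is a connected regular Kac-Moody diagram properly containing one of these ``affine-type'' regular Kac-Moody diagrams. I would rule this out using Corollary~\ref{lm123} together with a short direct analysis of the finitely many one-vertex extensions not excluded at once; the essential input is the theory of integrable highest weight modules over affine Kac-Moody superalgebras of \cite{KW01}, which restricts the weights, hence the bonds, that a new vertex may carry. \emph{This is the step I expect to be the main obstacle}, precisely because one cannot appeal to infinite growth here: the family $Q^{\pm}(m,n,t)$ is a genuine regular Kac-Moody superalgebra of infinite growth, so the obstruction to further extension must be combinatorial and representation-theoretic rather than a growth estimate.

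Finally I would match the classification to the statement. Comparing the symmetrizable diagrams produced above with the tables of \cite{L86,L89}, they are exactly the finite-dimensional and affine symmetrizable contragredient Lie superalgebras possessing a simple isotropic root, all of which have finite growth; conversely, inspecting van de Leur's list one checks directly, case by case, that each symmetrizable finite-growth contragredient Lie superalgebra with a simple isotropic root has all of its odd-reflected Cartan matrices equal to generalized Cartan matrices, hence is regular Kac-Moody. This proves part~(1). The remaining, non-symmetrizable diagrams in the classification are precisely $q(n)^{(2)}$, $S(1,2,\alpha)$ with $\alpha\in\mathbb{C}\setminus\mathbb{Z}$, and $Q^{\pm}(m,n,t)$ with $m,n,t\in\mathbb{Z}_{\leq -1}$ not all equal to $-1$, and a one-line computation confirms that none of these Cartan matrices is symmetrizable; this proves part~(2).
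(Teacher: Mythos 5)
Your proposal follows essentially the same route the paper takes (and only sketches here, deferring details to \cite{H07,H08}): induction on the number of vertices starting from Theorem~\ref{two}, first classifying the regular Kac-Moody diagrams satisfying condition~(\ref{gammaproperty}) by extending finite-type diagrams, then proving completeness via Corollary~\ref{lm123} and the integrable highest weight module theory of \cite{KW01} plus direct computations, and finally comparing with van de Leur's symmetrizable classification. Your observation that the completeness step cannot rely on a growth estimate because $Q^{\pm}(m,n,t)$ has infinite growth is exactly the point the paper makes in motivating the representation-theoretic argument.
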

\newpage

\section{Principal roots}\ \indent
We call a root $\alpha\in\Delta_{0}$ {\em principal} if either $\alpha$ or $\frac{1}{2}\alpha$ belongs to some base $\Pi'$ obtained from $\Pi$ be a sequence of regular odd reflections. For a principal root, the subalgebra generated by $X_{\alpha}$, $Y_{\alpha}$ and $h_{\alpha}:=[X_{\alpha},Y_{\alpha}]$ is isomorphic to $\mathfrak{sl}_2$, and we may choose $X_{\alpha}$, $Y_{\alpha}$ such that $\alpha(h_{\alpha})=2$.  Note that if $\alpha=2\beta$, then $X_{\alpha}=[X_{\beta},X_{\beta}]$.

Let $\Pi_{0}\subset\Delta_{0}$ denote the set of principal roots.  It is clear that $\Pi_{0}\subset\Delta_{0}^{+}$.   We can define the {\em Weyl group}\index{Weyl group} to be the group generated by reflections $r_{\alpha}$ for $\alpha\in\Pi_{0} $.   In general $\Pi_{0}$ can be infinite.  The results in this section are from \cite{HS07}.

\begin{lemma} \label{lm14}  If $ {\mathfrak g}\left(A\right) $ is a regular Kac-Moody superalgebra of finite growth, then $ \Pi_{0} $ is finite. Moreover, $ |\Pi_{0}|\leq2n $.
\end{lemma}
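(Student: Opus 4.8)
The plan is to reduce the whole statement to a single linear-algebra fact: the coroots $\{h_\beta \mid \beta\in\Pi_{0}\}$ are linearly independent in $\mathfrak h$. Since $\dim\mathfrak h = 2n-\operatorname{rk}(A)\le 2n$, such independence gives at once that $\Pi_{0}$ is finite and that $|\Pi_{0}|\le 2n$ (in fact $|\Pi_{0}|\le 2n-\operatorname{rk}(A)$).

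The preliminary input is integrability. For each $\beta\in\Pi_{0}$ fix a base $\Pi_\beta$, reached from $\Pi$ by regular odd reflections, in which $\beta$ or $\tfrac12\beta$ is a simple root with diagonal Cartan entry $2$, and set $\mathfrak s_\beta=\langle X_\beta,Y_\beta,h_\beta\rangle\cong\mathfrak{sl}_2$ with $\beta(h_\beta)=2$. Because $\mathfrak g(A)$ has finite growth it is integrable (Theorem~\ref{th1}), so $\operatorname{ad}_{X_\beta}$ and $\operatorname{ad}_{Y_\beta}$ are locally nilpotent, $\mathfrak g$ is a locally finite $\mathfrak s_\beta$-module, hence $\gamma(h_\beta)\in\mathbb Z$ for all $\beta,\gamma\in\Pi_{0}$ and $r_\beta$ permutes $\Delta$. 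Moreover an odd reflection only replaces a single isotropic simple root by its negative, so $\Delta_{0}^{+}$ is unchanged in every $\Pi_\beta$; consequently every $\gamma\in\Pi_{0}$ remains a positive even root in each $\Pi_\beta$, and $r_\beta$ maps $\Pi_{0}$ onto itself.

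For the independence itself, take a nontrivial relation $\sum_{i=1}^m c_i h_{\beta_i}=0$ among distinct principal roots and work in a base $\Pi':=\Pi_{\beta_1}$ in which $\beta_1$ (or $\tfrac12\beta_1$) is simple. By Theorem~\ref{admiss}, $A'$ is a generalized Cartan matrix; by Lemma~\ref{lm20} the even non-isotropic simple roots of $\Pi'$, together with $\mathfrak h$, span a subalgebra $\mathfrak h''\oplus\mathfrak g(C)$ with $C$ a generalized Cartan matrix, and by Lemma~\ref{lm21} this subalgebra has finite growth, so $C$ is of finite or affine type. Since each $\beta_i$ is a positive even root in $\Pi'$, its coroot decomposes through the simple coroots of $\mathfrak g(C)$ and the torus $\mathfrak h''$; projecting the relation to $\mathfrak g(C)$ and invoking linear independence of the simple coroots of $\mathfrak g(C)$, together with the fact that in a finite-or-affine root system the coroot of a positive root has connected support, one should be able to peel off $c_1$ (choosing $\beta_1$ extremal, e.g. maximal in $\{\beta_1,\dots,\beta_m\}$ for the dominance order) and induct on $m$.

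I expect the main obstacle to be precisely this last step. The principal roots $\beta_i$ are a priori simple in different reflected bases, so to compare their coroots they must all be transported into $\Pi'$, and the transport rule for coroots under an odd reflection (the displayed formula for $h_i'$) mixes $h_k$ and $h_i$; controlling the resulting supports uniformly — so that an extremal $\beta_1$ really does not appear in the $\mathfrak g(C)$-support of the other $\beta_i$ — is the delicate combinatorial core. A cleaner alternative I would pursue in parallel is to show directly that no principal root is a sum of two positive even roots, i.e.\ that $\Pi_{0}$ is exactly the set of simple roots of $[\mathfrak g_{\overline 0},\mathfrak g_{\overline 0}]$ for a triangular decomposition compatible with $\Pi$; linear independence of the $h_\beta$ is then built in, and finite growth of $\mathfrak g_{\overline 0}$ (a graded subalgebra of $\mathfrak g(A)$) forces it to be a direct sum of finite and affine Kac--Moody Lie algebras, whose total number of simple roots is at most $\dim\mathfrak h\le 2n$.
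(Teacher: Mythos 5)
Your reduction collapses at its first and central step: the coroots $\{h_\beta \mid \beta\in\Pi_{0}\}$ of the principal roots are \emph{not} linearly independent in general, so neither the finiteness of $\Pi_{0}$ nor the bound $|\Pi_{0}|\le 2n$ can be read off from $\dim\mathfrak h\le 2n$. A concrete counterexample lying squarely inside the hypotheses of Lemma~\ref{lm14} is the affine superalgebra $\mathfrak{sl}(2|2)^{(1)}$ with its standard base of four isotropic simple roots $\alpha_1,\dots,\alpha_4$ arranged in a cycle, $a_{i,i+1}=1$ and $a_{i+1,i}=-1$ with indices mod $4$. Here $\Pi_{0}$ contains the four roots $\alpha_1+\alpha_2,\ \alpha_2+\alpha_3,\ \alpha_3+\alpha_4,\ \alpha_4+\alpha_1$, and the transport formula for $h_i'$ under an odd reflection (Section 4), after normalizing $\beta(h_\beta)=2$, yields the coroots $h_1-h_2,\ h_2-h_3,\ h_3-h_4,\ h_4-h_1$, whose sum is zero. (Intuitively, the even part contains two copies of $\widehat{\mathfrak{sl}}_2$ whose canonical central elements are negatives of each other in $\mathfrak h$.) The same example defeats your ``cleaner alternative'': identifying $\Pi_{0}$ with the simple roots of the even part does not make independence of the $h_\beta$ ``built in,'' precisely because each affine component of the even part loses one dimension of coroot span, and the central elements of distinct affine components need not remain independent inside $\mathfrak h$.

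The argument the paper is pointing to (the lemma is quoted from \cite{HS07}; its proof is not reproduced here, but Lemma~\ref{lm4} and Corollary~\ref{cor1} immediately following it are the intended tools) replaces linear independence by a rank count whose factor $2$ is there exactly to absorb affine degeneracy. For a finite subset $S\subset\Pi_{0}$ form $B=\left(\beta_j(h_{\beta_i})\right)_{\beta_i,\beta_j\in S}$. By Corollary~\ref{cor1}, $B$ is an even generalized Cartan matrix and $\mathfrak g(B)$ has finite growth, so every indecomposable block of $B$ is of finite or affine type. A finite block of size $k$ has rank $k$; an affine block of size $k$ has rank $k-1$ and necessarily $k\ge 2$; hence every block of size $k$ has rank at least $k/2$ and $\operatorname{rk}(B)\ge |S|/2$. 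On the other hand the entries of $B$ are obtained by evaluating the $\beta_j$, which lie in the $n$-dimensional span of $\alpha_1,\dots,\alpha_n$, so $\operatorname{rk}(B)\le n$. Therefore $|S|\le 2n$ for every finite $S$, which gives both assertions of the lemma. Your proposal contains none of this block-and-rank analysis, and the substitute you lean on (independence of the coroots) is false, so the gap is essential rather than a repairable technicality.
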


For any finite subset $S\subset \Pi_{0}$, we can define a matrix $B$ by setting $b_{ij}=\alpha_j(h_i)$.  The corollary of the following lemma is an important tool in the classification of finite-growth contragredient Lie superalgebras.

\begin{lemma} \label{lm4} Let $S$ be a
  subset of $\Pi_0$, and let ${\mathfrak g}_S$ be the subalgebra of ${\mathfrak g}$ generated by $
  X_{\beta},Y_{\beta} $ for all $\beta \in S$ and $ h_{\beta}
  =[X_{\beta}, Y_{\beta}]$.
These elements satisfy the following relations for the matrix $B$ defined by $S$:
$$[h_{\beta}, X_{\gamma}]=b_{\beta , \gamma} X_{\beta},
[h_{\beta}, Y_{\gamma}]=-b_{\beta , \gamma} Y_{\beta}, [X_{\beta}, Y_{\gamma}]={\delta}_{\beta , \gamma} h_{\beta},\text{
for } \beta,\gamma\in{S}.
$$
There exists a surjective homomorphism ${\mathfrak g}_S \to {\mathfrak
  g}'(B) /c$ where $c$ is some central subalgebra of the Cartan
subalgebra in ${\mathfrak g}' (B)=[{\mathfrak g}(B), {\mathfrak g} (B)]$.
\end{lemma}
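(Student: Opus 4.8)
The plan is to verify the Chevalley relations first, and then to build the surjection \emph{in the stated direction} by realizing $\mathfrak{g}_S$ as a quotient of the derived algebra of the ``free'' contragredient superalgebra $\tilde{\mathfrak{g}}(B)$ and comparing two kernels. For the relations, since $h_\beta=[X_\beta,Y_\beta]\in\mathfrak{h}$ while $X_\gamma\in\mathfrak{g}_\gamma$ and $Y_\gamma\in\mathfrak{g}_{-\gamma}$, the root-space decomposition gives $[h_\beta,X_\gamma]=\gamma(h_\beta)X_\gamma=b_{\beta\gamma}X_\gamma$ and $[h_\beta,Y_\gamma]=-b_{\beta\gamma}Y_\gamma$ at once. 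For the last relation, $[X_\beta,Y_\gamma]\in\mathfrak{g}_{\beta-\gamma}$, and the case $\beta=\gamma$ is the definition of $h_\beta$; so it remains to show $[X_\beta,Y_\gamma]=0$ for distinct $\beta,\gamma\in S$, i.e. $\beta-\gamma\notin\Delta$. I would reduce to a base $\Pi'$ obtained by regular odd reflections in which $\gamma$ or $\tfrac12\gamma$ is simple; since regular odd reflections preserve positivity of even roots, every principal root, in particular $\beta$, stays positive in $\Pi'$. Writing $\beta$ in $\Pi'$ with nonnegative coefficients, a negative value of $\beta-\gamma$ would force $\beta$ to be a multiple of the real even root $\gamma$, hence $\beta=\gamma$, a contradiction, while the positive case is excluded by the fact that distinct principal roots behave as distinct simple roots of a single even subsystem. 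This off-diagonal vanishing is one of the two delicate points.

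For the surjection, let $\tilde{\mathfrak{g}}'(B):=[\tilde{\mathfrak{g}}(B),\tilde{\mathfrak{g}}(B)]$, generated by $X_i,Y_i$ and $h_i=[X_i,Y_i]$ for $i\in S$, with Cartan $\tilde{\mathfrak{h}}'=\operatorname{span}\{h_i\}$; it is universal among Lie superalgebras generated by Chevalley-$B$-data whose Cartan is spanned by the brackets $[x_i,y_i]$. Because the generators of $\mathfrak{g}_S$ satisfy exactly these relations by the first part, the universal property produces a surjection $\psi_S\colon\tilde{\mathfrak{g}}'(B)\twoheadrightarrow\mathfrak{g}_S$ sending $X_i\mapsto X_{\beta_i}$, $Y_i\mapsto Y_{\beta_i}$, $h_i\mapsto h_{\beta_i}$. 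Let $K_S=\ker\psi_S$, let $\psi_B\colon\tilde{\mathfrak{g}}'(B)\twoheadrightarrow\mathfrak{g}'(B)$ be the canonical surjection with kernel $K_B=r(B)\cap\tilde{\mathfrak{g}}'(B)$, and set $\tilde c=\{\sum_i c_i h_i:\sum_i c_i b_{ij}=0\ \forall j\}$, the intersection of $\tilde{\mathfrak{h}}'$ with the center of $\tilde{\mathfrak{g}}'(B)$. Then $c:=\psi_B(\tilde c)$ is the asserted central subalgebra of the Cartan of $\mathfrak{g}'(B)$.

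The point is to factor $\pi_c\circ\psi_B\colon\tilde{\mathfrak{g}}'(B)\to\mathfrak{g}'(B)/c$ through $\psi_S$, where $\pi_c$ is the projection; this is possible precisely when $K_S\subseteq\ker(\pi_c\circ\psi_B)=K_B+\tilde c$, and the induced map $\mathfrak{g}_S\to\mathfrak{g}'(B)/c$ is then surjective because $\pi_c\circ\psi_B$ is. The key computation is $K_S\cap\tilde{\mathfrak{h}}'\subseteq\tilde c$: if $\sum_i c_i h_{\beta_i}=0$ in $\mathfrak{g}(A)$, then bracketing with $X_{\beta_j}$ yields $(\sum_i c_i b_{ij})X_{\beta_j}=0$, whence $\sum_i c_i b_{ij}=0$ for all $j$, i.e. $\sum_i c_i h_i\in\tilde c$. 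Thus every linear dependency among the $h_{\beta_i}$ is central, which is exactly why only the central correction $c$ is needed. To close the argument I would invoke the maximality built into the contragredient construction: $K_B$ is maximal among $\operatorname{ad}\tilde{\mathfrak{h}}'$-stable ideals meeting $\tilde{\mathfrak{h}}'$ trivially, so after passing to the quotient by the central ideal $\tilde c$ (which reconciles the two Cartan subspaces) the image of $K_S$ is an $\operatorname{ad}$-stable ideal meeting the Cartan trivially, hence lies in the image of $K_B$; unwinding gives $K_S\subseteq K_B+\tilde c$ and the desired surjection $\mathfrak{g}_S\twoheadrightarrow\mathfrak{g}'(B)/c$.

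The \textbf{main obstacle} I anticipate is the interplay between the two Cartan subalgebras. Both $\tilde{\mathfrak{g}}'(B)$ and $\mathfrak{g}'(B)$ carry the full Cartan $\operatorname{span}\{h_i\}$ of dimension $|S|$, whereas in $\mathfrak{g}_S$ the elements $h_{\beta_i}$ may be linearly dependent; the content of the lemma is that these dependencies are absorbed exactly by quotienting by $c$, and that no relation beyond those of $K_B$ enters $K_S$. Making the maximality transfer rigorous modulo $\tilde c$ (in particular checking that $K_S$ is $\operatorname{ad}\tilde{\mathfrak{h}}'$-stable and that the quotient by the central ideal does not disturb the maximality property of $r(B)$) is the technical heart, with the off-diagonal vanishing $\beta-\gamma\notin\Delta$ from the first part as the secondary difficulty.
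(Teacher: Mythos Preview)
The paper does not include a proof of this lemma; it is stated with attribution to \cite{HS07}. So there is no ``paper's own proof'' to compare against, and I can only assess your argument on its merits.

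Your overall architecture---verify the Chevalley relations, obtain a surjection $\tilde{\mathfrak g}'(B)\twoheadrightarrow\mathfrak g_S$ by universality, then show $K_S\subseteq K_B+\tilde c$ via maximality of $r(B)$---is the standard one and is sound. Two places need tightening.

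\emph{The off-diagonal vanishing $[X_\beta,Y_\gamma]=0$.} Your treatment of the ``negative'' case is fine: if $\gamma$ (or $\tfrac12\gamma$) is simple in $\Pi'$ and $\beta$ is positive there, then $\beta-\gamma$ negative forces $\beta$ to be a non-negative multiple of the relevant simple root, hence $\beta=\gamma$. But your dismissal of the ``positive'' case (``distinct principal roots behave as distinct simple roots of a single even subsystem'') is circular---that is essentially what the lemma is trying to establish. The clean fix is to run the same argument with the roles of $\beta$ and $\gamma$ swapped: pass to a base $\Pi''$ in which $\beta$ (or $\tfrac12\beta$) is simple; then $\gamma-\beta$ is either not a root or a \emph{positive} even root. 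Since regular odd reflections never change the sign of an even root, positivity of even roots is the same in $\Pi'$ and $\Pi''$; hence $\beta-\gamma$ cannot be simultaneously positive and negative, and must fail to be a root.

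\emph{The maximality step.} You correctly flag the Cartan mismatch, but the issue is sharper than you state: $K_S$ is a priori only $\tilde{\mathfrak h}'$-stable, not $\mathfrak h_B$-stable, and when $B$ is singular these are genuinely different gradings. What saves you is the triangular decomposition $K_S=K_S^-\oplus K_S^0\oplus K_S^+$ (inherited from that of $\mathfrak g(A)$), together with the observation that $K_S^+$ is in fact an ideal of $\tilde{\mathfrak g}'(B)$ contained in $\tilde{\mathfrak n}_B^+$: since no $X_i$ lies in $K_S^+$ (as $\psi_S(X_i)=X_{\beta_i}\neq0$), one checks $[Y_i,K_S^+]\subseteq K_S\cap\tilde{\mathfrak n}_B^+=K_S^+$. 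Then the $\mathfrak h_B$-closure $\overline{K_S^+}:=\sum_k(\operatorname{ad}\mathfrak h_B)^k K_S^+$ is still an ideal contained in $\tilde{\mathfrak n}_B^+$ (use $[X_i,[h,j]]=-\alpha_i(h)[X_i,j]+[h,[X_i,j]]$), hence lies in $r^+(B)$ by maximality. This gives $K_S^\pm\subseteq r^\pm(B)$ and, with $K_S^0\subseteq\tilde c$, the desired $K_S\subseteq K_B+\tilde c$. Your sketch gestures at this but does not supply it.
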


\begin{corollary} \label{cor1}  If a Lie superalgebra $ {\mathfrak g}\left(A\right) $ has finite growth, then
for any finite subset $S$ of ${\Pi}_0$ the Lie algebra $ {\mathfrak g}\left(B\right) $ also has finite growth. In particular, $ B $ is an even generalized Cartan matrix.
\end{corollary}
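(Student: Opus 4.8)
The plan is to run the surjective homomorphism $\varphi\colon\mathfrak g_S\to\mathfrak g'(B)/c$ supplied by Lemma~\ref{lm4} and to transport finite growth along it, from source to target. The four steps are: (i) show that $\mathfrak g_S$ has finite Gelfand--Kirillov dimension; (ii) conclude that $\mathfrak g'(B)/c$, being a homomorphic image of $\mathfrak g_S$, also has finite Gelfand--Kirillov dimension; (iii) pass from $\mathfrak g'(B)/c$ back to $\mathfrak g(B)$, which changes matters only in degree zero and by finite-dimensional amounts; (iv) upgrade ``finite growth'' to ``$B$ is an even generalized Cartan matrix'' using Theorem~\ref{th1} and Lemma~\ref{lm23}.

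For step (i), note that $\mathfrak g_S$ is the subalgebra of $\mathfrak g(A)$ generated by the finitely many elements $X_\beta,Y_\beta$ for $\beta\in S$, each of which is homogeneous for the principal $\mathbb Z$-grading of $\mathfrak g(A)$: writing $\beta=\sum_i c_i\alpha_i$ with $c_i\in\mathbb Z_{\ge0}$, the vector $X_\beta$ lies in $\mathfrak g_{h(\beta)}$ and $Y_\beta$ in $\mathfrak g_{-h(\beta)}$, where $h(\beta):=\sum_i c_i$. Since $S$ is finite there is a bound $M:=\max_{\beta\in S}h(\beta)$, so the finite-dimensional generating subspace $V:=\operatorname{span}\{X_\beta,Y_\beta,h_\beta:\beta\in S\}$ satisfies $V^n\subseteq\bigoplus_{|m|\le nM}\mathfrak g_m$ for every $n$. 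As $\mathfrak g(A)$ has finite growth, $\dim\bigoplus_{|m|\le nM}\mathfrak g_m$ is bounded by a polynomial in $nM$, hence in $n$; therefore $\dim V^n$ grows polynomially and $\mathfrak g_S$ has finite Gelfand--Kirillov dimension.

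Step (ii) is then immediate, since Gelfand--Kirillov dimension does not increase under passing to a quotient of a finitely generated algebra, and $\mathfrak g'(B)/c$ is such a quotient of $\mathfrak g_S$. For step (iii), recall that $c$ is contained in the Cartan subalgebra of $\mathfrak g(B)$, which is finite-dimensional and concentrated in degree zero, and that $\mathfrak g(B)/\mathfrak g'(B)$ is likewise finite-dimensional; hence for $m\neq0$ one has $|\dim\mathfrak g(B)_m-\dim(\mathfrak g'(B)/c)_m|\le\dim c$, while $\mathfrak g(B)_0$ is finite-dimensional. Thus $\dim\mathfrak g(B)_m$ grows polynomially, i.e.\ $\mathfrak g(B)$ has finite growth. (Steps (ii)--(iii) can equivalently be phrased purely in terms of Gelfand--Kirillov dimension, which is insensitive to the change of grading involved.)

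Finally, for step (iv): since $S\subseteq\Pi_0\subseteq\Delta_0$ consists of even roots, $B$ is an even matrix, and the normalization $\beta(h_\beta)=2$ gives $b_{\beta\beta}=2$ for all $\beta\in S$, so $B$ has no zero rows; the same holds for every indecomposable block of $B$, each of which has finite growth by Lemma~\ref{lm21}. Applying Theorem~\ref{th1} to each block with at least two vertices shows it is integrable, and then Lemma~\ref{lm23} gives conditions 1--4; since $b_{\beta\beta}=2\neq0$, condition 4 can hold only vacuously, which is precisely condition $4'$, so each such block, and hence $B$ itself, is a generalized Cartan matrix (the one-vertex blocks are just $(2)$). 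The only real subtlety in the argument is the bookkeeping around the two distinct principal gradings --- the one on $\mathfrak g(A)$, in which $X_\beta$ has height $h(\beta)$, and the one on $\mathfrak g(B)$, in which $X_\beta$ has degree $1$ --- and this is exactly what passing through Gelfand--Kirillov dimension in steps (i)--(iii) is designed to finesse.
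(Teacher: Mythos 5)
Your proof is correct and follows the route the paper intends: the corollary is stated as a direct consequence of Lemma~\ref{lm4}, and your argument simply fills in the standard details (finite growth of $\mathfrak{g}_S$ via bounded heights, passage to the quotient $\mathfrak{g}'(B)/c$, the finite-dimensional adjustments to recover $\mathfrak{g}(B)$, and Theorem~\ref{th1} with Lemma~\ref{lm23} to get the generalized Cartan matrix conditions). The use of Gelfand--Kirillov dimension to handle the mismatch between the two principal gradings is a clean way to make the transfer of finite growth rigorous and is consistent with the paper's own identification of finite growth with finite GK dimension.
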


\section{The Lie superalgebra $Q^{\pm}(m,n,t)$}\label{sectionQ}
\begin{equation}\label{Qroots}
\begin{array}{l}\xymatrix{& \OX \AW[ldd]^{1}_{c} \AW[rdd]^{b}_{1} & \\ & & \\
\OX \AW[rr]^{1}_{a} & & \OX }
\end{array}\
\begin{array}{l}
1+a+\frac{1}{b} = m \\
1+b+\frac{1}{c} = n \\
1+c+\frac{1}{a} = t \\ \\
m,n,t \in \mathbb{Z}_{\leq -1} \text{, not all equal to -1.} \end{array}
\end{equation}

\begin{theorem}[Hoyt \cite{H07, H08}]\label{lm333}
For each diagram $Q^{\pm}(m,n,t)$, it follows that $a,b,c \in \mathbb{R}\setminus\mathbb{Q}$, and there are two solutions of the above equations, namely $Q^{-}(m,n,t)$ with $a,b,c<-1$ and
$Q^{+}(m,n,t)$ with $-1<a,b,c<0$.
\end{theorem}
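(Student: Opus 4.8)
The plan is to reduce the cyclic system to a single quadratic with integer coefficients and extract everything from that quadratic together with a short sign analysis. Write $M=m-1$, $N=n-1$, $T=t-1$, so $M,N,T\in\mathbb Z_{\le -2}$ and not all equal to $-2$, and the three edge-relations of the diagram read $a+1/b=M$, $b+1/c=N$, $c+1/a=T$. Since each of $a,b,c$ appears in a denominator, none is zero, and eliminating $b,c$ in turn (via $b=\phi_M(a)$, $c=\phi_N(b)$, $a=\phi_T(c)$ with $\phi_K(x)=1/(K-x)$) exhibits $a$ as a fixed point of the Möbius map $\phi_T\circ\phi_N\circ\phi_M$, i.e.\ as a root of the integer quadratic
$$P(x)=(1-TN)\,x^{2}+(TNM-M-T+N)\,x-(NM-1).$$
Cyclically permuting $(M,N,T)$ gives the analogous quadratics $Q,R$ satisfied by $b$ and $c$. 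Conversely each root of $P$ extends, again via $\phi_M,\phi_N,\phi_T$ (one checks these are never applied at a pole, e.g.\ $M$ is not a fixed point of $\phi_T\circ\phi_N\circ\phi_M$), to a genuine real solution $(a,b,c)$; so the solution set is finite, of size at most two, and it suffices to analyse $P$.

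Next I would show that every real solution has $a,b,c<0$: assuming $a>0$ and using $M,N,T\le -2$, the relations force successively $c=T-1/a<-2$, then $b=N-1/c<-3/2$, then $a=M-1/b<-4/3<0$, a contradiction, and by the cyclic symmetry the same rules out $b>0$ and $c>0$. Now the leading coefficient $1-TN\le 1-4<0$, while a short computation with $u=-M-1\ge 1$, $w=-T-1\ge 1$, $v=-N\ge 2$ gives $P(-1)=uvw-(u+w)\ge 2uw-(u+w)\ge 0$, with equality exactly when $u=w=1$ and $v=2$, i.e.\ $M=N=T=-2$ — the excluded case $m=n=t=-1$, for which indeed $P(x)=-3(x+1)^2$ and $a=b=c=-1$. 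Hence, under the hypothesis, $P(-1)>0$: a downward parabola attaining a positive value has positive discriminant, so $P$ has two distinct real roots, and $-1$ lies strictly between them; combined with negativity, one root $a_-<-1$ and the other $a_+\in(-1,0)$. This already gives $a,b,c\in\mathbb R$ and that there are exactly two solutions.

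To see that the signs are \emph{coupled} as claimed, note that $a\mapsto b=\phi_M(a)$ restricts to a bijection from $\{a_-,a_+\}$ onto the two roots of $Q$ (both root sets lie in $(M,0)$, since $b<0$ forces $M-a=1/b<0$), and on $(M,\infty)$ the map $\phi_M$ is increasing, hence sends the smaller root $a_-$ to the smaller root of $Q$ and $a_+$ to the larger. By the previous paragraph applied to $b$, the roots of $Q$ are one in $(-\infty,-1)$ and one in $(-1,0)$; therefore the solution with $a_-<-1$ has $b<-1$, and cyclically $c<-1$ — this is $Q^{-}(m,n,t)$ — while the solution with $a_+\in(-1,0)$ has $b,c\in(-1,0)$ — this is $Q^{+}(m,n,t)$.

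It remains to prove $a,b,c\notin\mathbb Q$, equivalently that $\operatorname{disc}P$ is not a perfect square, and this is the step I expect to require the most care. I would argue with the root $\alpha:=-a_->1$: using the relations and $M,N,T\le -2$, and writing $M'=-M\ge 2$ etc., one obtains the purely periodic, period-$3$, negative-regular continued fraction $\alpha=M'-\cfrac{1}{N'-\cfrac{1}{T'-\cfrac{1}{\alpha}}}$ in which, because $\alpha>1$, every intermediate quantity is again $>1$, so this really is the continued-fraction expansion of $\alpha$. If $\alpha=p/q$ were rational in lowest terms, the first step would produce the next term as $q/(M'q-p)$ with $0<M'q-p<q$ and $\gcd(q,M'q-p)=1$, so the positive integer denominators along the expansion would strictly decrease — impossible for a periodic sequence. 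Hence $\alpha$ is irrational, $\sqrt{\operatorname{disc}P}\notin\mathbb Q$, so $\operatorname{disc}P$ is not a square, and both roots of $P$ (and likewise of $Q,R$) are irrational. The delicate points here are fixing the correct sign normalization for the continued fraction and observing that the hypothesis ``not all equal to $-1$'' is precisely what keeps the expansion infinite — i.e.\ what prevents $a,b,c$ from degenerating to the rational value $-1$.
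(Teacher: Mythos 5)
The paper states Theorem~\ref{lm333} without proof (it is quoted from \cite{H07, H08}), so there is no in-text argument to compare yours against; judged on its own terms, your proposal is correct and essentially complete. The reduction to the integer quadratic $P(x)=(1-TN)x^{2}+(TNM-M-T+N)x-(NM-1)$ is right (the M\"obius composition checks out), the pole verification you flag is genuinely needed but easy: $\phi_T\circ\phi_N\circ\phi_M(M)=1/T\neq M$, and a pole at the second or third stage sends $a$ to $0$ or $\infty$, neither of which is a root of $P$ since $P(0)=1-NM<0$. The identity $P(-1)=uvw-(u+w)\geq 0$, with equality exactly at $m=n=t=-1$, is correct, so the location of the two roots on either side of $-1$, together with your elimination of positive solutions and the monotonicity of $\phi_M$ on $(M,\infty)$, does couple the signs of $a,b,c$ across the cycle as claimed. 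The irrationality step via the purely periodic negative continued fraction is also sound: for the $Q^{-}$ solution one has $\alpha=-a\in(M'-1,M')$ with $\beta,\gamma>1$, so the lowest-terms denominators strictly decrease around one period and cannot return to their starting value. Two small points are worth writing out explicitly in a final version: (i) that $\phi_T\circ\phi_N\circ\phi_M$ is not the identity transformation (its matrix has entry $1-TN\neq 0$), so its fixed-point set really is the root set of a genuine quadratic and the solution count is at most two; and (ii) the passage from ``the root $a_-$ is irrational'' to ``$a,b,c$ are irrational for both solutions,'' which follows because the second root of the integer quadratic $P$ is rational if and only if the first is, and $b=1/(M-a)$, $c=1/(N-b)$ inherit irrationality from $a$ --- you leave this implicit.
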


\begin{corollary}\label{cor3}
The determinant of the Cartan matrix equals $1 + abc$ and is nonzero. Hence, the dimension of the Cartan subalgebra is
$3$.
\end{corollary}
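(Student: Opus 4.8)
The plan is to compute $\det A$ directly and then rule out the value $0$ by appealing to the irrationality of $a,b,c$ established in Theorem~\ref{lm333}. Reading the Cartan matrix off the diagram in~\eqref{Qroots} — taking the top vertex as $\alpha_1$, the lower-left as $\alpha_2$, the lower-right as $\alpha_3$ — the arrow labels give
$$A=\begin{pmatrix} 0 & 1 & b \\ c & 0 & 1 \\ 1 & a & 0 \end{pmatrix},$$
and a cofactor expansion along the first row gives $\det A = -\det\begin{pmatrix} c & 1 \\ 1 & 0\end{pmatrix} + b\,\det\begin{pmatrix} c & 0 \\ 1 & a\end{pmatrix} = 1+abc$. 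This first step is purely mechanical.

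For the non-vanishing, I would argue by contradiction: suppose $1+abc=0$. From the defining relation $1+a+\tfrac{1}{b}=m$ one gets $b=\tfrac{1}{m-1-a}$, and the denominator is nonzero since $a\notin\mathbb{Q}$ (Theorem~\ref{lm333}) while $m-1\in\mathbb{Z}$; from $1+c+\tfrac{1}{a}=t$ one gets $ac=(t-1)a-1$. Substituting,
$$1+abc = 1+(ac)\,b = 1+\frac{(t-1)a-1}{m-1-a} = \frac{(t-2)a+(m-2)}{m-1-a}.$$
Thus $1+abc=0$ would force $(t-2)a+(m-2)=0$, and since $t\le -1$ makes $t-2\ne 0$ this gives $a=\tfrac{m-2}{2-t}\in\mathbb{Q}$, contradicting Theorem~\ref{lm333}. (By the cyclic symmetry $(a,b,c,m,n,t)\mapsto(b,c,a,n,t,m)$ of the diagram $Q^{\pm}(m,n,t)$, the same computation rewrites $1+abc$ as a fractional-linear function of $b$ or of $c$ with rational coefficients, so any one of the three contradictions would suffice.) Hence $\det A=1+abc\ne 0$, so $\operatorname{rk}(A)=3$, and by the construction of $\mathfrak{g}(A)$ the Cartan subalgebra $\mathfrak{h}$ has dimension $2n-\operatorname{rk}(A)=6-3=3$.

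I do not expect a genuine obstacle here: the only point that needs care is that the denominators that appear (such as $m-1-a$) are nonzero, which is immediate from $a$ being irrational, and the substance of the argument is carried entirely by Theorem~\ref{lm333}.
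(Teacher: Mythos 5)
Your proposal is correct. The determinant computation is right (the cyclic arrangement of the off-diagonal entries $1,b,c,1,1,a$ read off the diagram gives $\det A = 1+abc$ for any of the consistent labelings), and the non-vanishing argument is sound: eliminating $b$ and $c$ via the defining relations turns $1+abc$ into $\frac{(t-2)a+(m-2)}{m-1-a}$, and vanishing would force $a\in\mathbb{Q}$, contradicting Theorem~\ref{lm333}. The paper states this corollary without proof, but the route it is visibly set up for is even shorter and uses the \emph{other} half of Theorem~\ref{lm333}: the sign information. For $Q^{-}$ one has $a,b,c<-1$, hence $abc<-1$ and $1+abc<0$; for $Q^{+}$ one has $-1<a,b,c<0$, hence $-1<abc<0$ and $0<1+abc<1$. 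Either way $abc\neq -1$ with no algebra at all. Your version buys nothing extra here but is a legitimate alternative, and it does have the minor virtue of not depending on the sign classification, only on irrationality; just make sure you flag that both ingredients come from the same theorem, so there is no gain in logical economy. The final step $\dim\mathfrak{h}=2n-\operatorname{rk}(A)=3$ is exactly as in the paper's construction of $\mathfrak{g}(A)$.
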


It is clear that $Q^{\pm}(m,n,t) \cong Q^{\pm}(n,t,m)$ by cyclic permutation of the variables $a,b,c$.  We also have
$Q^{\pm}(m,n,t) \cong Q^{\mp}(m,t,n)$ by transforming the equations: $a \rightarrow \frac{1}{b}$, $b \rightarrow
\frac{1}{a}$, $c \rightarrow \frac{1}{c}$.

\begin{lemma}[Hoyt \cite{H07, H08}] $Q^{\pm}(m,n,t)$ is not symmetrizable. \end{lemma}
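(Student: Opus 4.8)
The plan is to write down the Cartan matrix of $Q^{\pm}(m,n,t)$ explicitly and check that the standard necessary condition for symmetrizability fails. Reading the double arrows in \eqref{Qroots} (every vertex is isotropic, so $a_{ii}=0$), and labelling the top vertex $1$ and the bottom two vertices $2,3$, one gets $a_{12}=1$, $a_{13}=b$, $a_{21}=c$, $a_{23}=1$, $a_{31}=1$, $a_{32}=a$, i.e.
\[ A=\begin{pmatrix} 0 & 1 & b \\ c & 0 & 1 \\ 1 & a & 0 \end{pmatrix}. \]

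Suppose $A$ were symmetrizable, so that $DA$ is symmetric for some invertible $D=\mathrm{diag}(d_1,d_2,d_3)$. Since rescaling the rows of $A$ does not change whether it is symmetrizable, there is no loss in having normalized $a_{ii}\in\{0,2\}$. Comparing the $(i,j)$ and $(j,i)$ entries of $DA$ yields $d_1=d_2 c$, $d_1 b=d_3$ and $d_2=d_3 a$; substituting, $d_2=d_3 a=d_1 b a=(d_2 c)ba=d_2\,abc$, so $d_2(1-abc)=0$ and hence $abc=1$. (This is simply the loop condition $a_{12}a_{23}a_{31}=a_{21}a_{32}a_{13}$ applied to the unique $3$-cycle of $\Gamma_A$.)

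It remains to observe that $abc\neq 1$. By Theorem~\ref{lm333}, for $Q^{+}(m,n,t)$ we have $-1<a,b,c<0$, and for $Q^{-}(m,n,t)$ we have $a,b,c<-1$; in both cases $abc$ is a product of three negative reals, hence $abc<0$, so in particular $abc\neq 1$. (Alternatively, the sign pattern of $a,b,c$ can be read off directly from the defining system $1+a+\frac1b=m$, $1+b+\frac1c=n$, $1+c+\frac1a=t$ with $m,n,t\in\mathbb{Z}_{\leq -1}$.) Therefore $A$ admits no symmetrizing diagonal matrix, i.e. $Q^{\pm}(m,n,t)$ is not symmetrizable.

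There is no substantive obstacle here; the only care needed is the bookkeeping of transcribing $\Gamma_A$ into $A$ consistently (which label sits on the arrow $i\to j$) and noting that the normalization $a_{ii}\in\{0,2\}$ is harmless for the symmetrizability question.
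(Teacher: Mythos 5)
Your proof is correct and follows essentially the same route as the paper's: symmetrizability of the Cartan matrix forces the loop condition $abc=1$, which is then contradicted using Theorem~\ref{lm333}. The only (minor) difference is the last step: the paper deduces from $abc=1$ that $a,b,c\in\mathbb{Q}$, contradicting the irrationality assertion of Theorem~\ref{lm333}, whereas you invoke the sign information ($a,b,c$ all negative, hence $abc<0$), which is a slightly more direct way to reach the same contradiction.
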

\begin{proof}
Suppose that $Q^{\pm}(m,n,t)$ is symmetrizable, for some $m,n,t\in\mathbb{Z}_{\geq -1}$.  Then $abc = 1$.  A simple computation shows that this implies $a,b,c\in\mathbb{Q}$, which contradicts Theorem~\ref{lm333}.
\end{proof}

\begin{lemma}[Hoyt \cite{H07, H08}] $Q^{\pm}(m,n,t)$ does not have finite growth. \end{lemma}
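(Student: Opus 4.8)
The plan is to derive a contradiction from the assumption that $Q^{\pm}(m,n,t)$ has finite growth by exhibiting a finite set of principal roots whose associated matrix $B$ is not an even generalized Cartan matrix, contradicting Corollary~\ref{cor1}. The starting point is the $3\times 3$ Cartan matrix $A$ with off-diagonal data given in \eqref{Qroots}, all three simple roots isotropic. Since each simple root is isotropic and (by hypothesis) $\mathfrak{g}(A)$ is a regular Kac-Moody superalgebra, I may freely apply the sequence of regular odd reflections described in the tables of Theorem~\ref{rkmtheorem}: reflecting at a single vertex turns one $\bigotimes$ into a $\bigcirc$ while keeping the diagram $3\times 3$, and the resulting even roots (the two $\bigcirc$ vertices) lie in $\Pi_0$. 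Iterating, the various reflected bases produce principal even roots whose pairwise inner products are governed by the quantities $1+a+\tfrac1b = m$, $1+b+\tfrac1c = n$, $1+c+\tfrac1a = t$ appearing in Table~2.

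First I would catalogue the principal roots: from each of the reflected diagrams with two $\bigcirc$ vertices, read off an even principal root, obtaining elements $\beta_1,\dots,\beta_k \in \Pi_0$ (by Lemma~\ref{lm14}, $|\Pi_0|\le 6$, so the catalogue is finite and small). Next I would compute the matrix $B = (\beta_j(h_{\beta_i}))$ on a well-chosen subset $S\subset\Pi_0$ of size two or three. The key observation is that the entries of $B$ are exactly the integers $m,n,t$ (and $-1$'s and $2$'s on/near the diagonal), as displayed in the arrows of the reflected diagrams in Table~2. Then I would check the generalized-Cartan-matrix conditions of Lemma~\ref{lm23} for $B$. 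The conditions $a_{ii}=2$, $a_{ij}\in\mathbb{Z}_{\le 0}$ are satisfied; the failure must come from the product structure: for a $2\times 2$ submatrix $\begin{pmatrix} 2 & p \\ q & 2\end{pmatrix}$ with $p,q$ negative integers, being an even generalized Cartan matrix of finite growth forces $pq \le 4$ (the affine/finite $A_1^{(1)}$ bound), whereas here $pq$ is a product of two of the quantities among $\{m,n,t\}$ — recalling $m,n,t\in\mathbb{Z}_{\le -1}$ and not all equal to $-1$, at least one of them has absolute value $\ge 2$, and combining it with another gives $pq \ge$ something exceeding $4$ once one traces through which pairs actually co-occur in a single reflected diagram.

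The main obstacle will be bookkeeping: making sure that the pair (or triple) of principal roots I select genuinely appears as simple roots in a single reflected base (so that Lemma~\ref{lm4} and Corollary~\ref{cor1} apply to that $B$), and that the off-diagonal entries of $B$ are the claimed integers rather than some rescaled variants. I would handle this by working directly with the explicit reflected diagrams already drawn in Table~2: each such diagram \emph{is} the Dynkin diagram of a base, its $\bigcirc$-vertices are principal roots, and the labels on the connecting arrows are literally the entries $m$, $n$, $t$ (up to the row rescaling that normalizes $a_{ii}$ to $2$). Once $B$ is pinned down, the contradiction is immediate: $\mathfrak{g}(B)$ is then a finite-growth contragredient Lie \emph{algebra} (ordinary Kac-Moody theory applies since $B$ is even), forcing $B$ to be of finite or affine type, hence every $2\times2$ principal submatrix has product of off-diagonal entries $\le 4$; but the constraint that not all of $m,n,t$ equal $-1$, together with $m,n,t \le -1$, makes at least one relevant product strictly larger than $4$. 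This contradicts finite growth, so $Q^{\pm}(m,n,t)$ does not have finite growth.
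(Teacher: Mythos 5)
Your overall strategy---pass to principal roots, form the matrix $B$, and invoke Corollary~\ref{cor1}---is exactly the paper's, and your identification of the even $\bigcirc$-vertices of the reflected diagrams (with arrow labels among $m,n,t$) as the relevant principal roots is essentially correct. The gap is in the final numerical step. You propose to obtain the contradiction from a $2\times 2$ principal submatrix $\left(\begin{smallmatrix}2 & p\\ q & 2\end{smallmatrix}\right)$ with $pq>4$, claiming that $m,n,t\in\mathbb{Z}_{\le -1}$ not all equal to $-1$ forces some such pair. It does not: the pairs that co-occur give the products $mn$, $nt$, $mt$, and for $(m,n,t)=(-1,-1,-2)$ these are $1$, $2$, $2$; more generally any admissible triple with $\max(|mn|,|nt|,|mt|)\le 4$ (e.g.\ $(-2,-2,-2)$, $(-1,-2,-2)$, $(-1,-1,-4)$) passes every $2\times 2$ test, since off-diagonal product $\le 3$ is finite type and product $=4$ is affine. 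So no $2\times 2$ (or pairwise) argument can finish the proof.

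The repair---and what the paper actually does---is to take the whole set $S=\Pi_0=\{\alpha_1+\alpha_2,\ \alpha_2+\alpha_3,\ \alpha_3+\alpha_1\}$ at once, which yields
\begin{equation*}
B=\begin{pmatrix}2 & m & m\\ n & 2 & n\\ t & t & 2\end{pmatrix},
\end{equation*}
a $3\times 3$ generalized Cartan matrix all of whose off-diagonal entries are nonzero negative integers. The only indecomposable $3\times 3$ generalized Cartan matrix of finite or affine type with no zero off-diagonal entries is that of $A_2^{(1)}$, with all off-diagonal entries equal to $-1$; since not all of $m,n,t$ equal $-1$, the matrix $B$ is of indefinite type, $\mathfrak{g}(B)$ does not have finite growth, and Corollary~\ref{cor1} gives the contradiction. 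Note also that Lemma~\ref{lm4} applies to an arbitrary finite subset of $\Pi_0$, with no requirement that the chosen principal roots be simultaneously simple in a single reflected base (indeed the three roots above cannot be), so the bookkeeping concern you raise is not an actual obstacle---but it is precisely the full $3\times 3$ matrix, not any $2\times 2$ piece of it, that detects the failure of finite growth.
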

\begin{proof}
Let $\Pi=\{\alpha_1,\alpha_2,\alpha_3\}$ be the set of simple roots of the Dynkin diagram in (\ref{Qroots}).
Then the set of principal roots is $\Pi_{0}=\{\alpha_1+\alpha_2,\alpha_2+\alpha_3,\alpha_3+\alpha_1\}$.
The matrix $B$ given by $\Pi_{0}$ is
\begin{equation*}
B = \left( \begin{array}{ccc}
2 & m & m  \\
n & 2 & n \\
t & t & 2 \\
\end{array} \right).
\end{equation*}
All off diagonal entries of $B$ are negative integers.  Since they are not all equal to $-1$, this is not a Cartan matrix of a finite growth Kac-Moody algebra.  By Corollary~\ref{cor1}, $Q^{\pm}(m,n,t)$ does not have finite growth.
\end{proof}

\section{The singular case}
\begin{theorem}[Hoyt, Serganova \cite{HS07}] Suppose that $A$ is an indecomposable matrix with no zero rows and that $\mathfrak{g}(A)$ has a singular isotropic simple root.  If $\mathfrak{g}(A)$ has finite growth, then $\mathfrak{g}(A)$ is isomorphic to $S(1,2,0)$, $D(2,1,0)$ or  $\widehat{D}(2,1,0)$.  See Table 3.
\end{theorem}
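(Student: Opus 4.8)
The plan is to reduce the singular case to the already-classified list of admissible matrices by exploiting the fact that a singular isotropic simple root $\alpha_k$ must, under odd reflection, produce a matrix that still satisfies the conditions of Lemma~\ref{lm23} (by Theorem~\ref{admiss}), but violates condition $4'$ — that is what "singular" means. First I would fix a singular isotropic simple root $\alpha_k$, so there is some $j$ with, say, $a_{kj}=0$ but $a_{jk}\neq 0$ (after rescaling $a_{ii}\in\{0,2\}$); by condition 4 of Lemma~\ref{lm23} applied to the index $k$ this forces $a_{kk}=0$, consistent with $\alpha_k$ being isotropic, and by the same condition applied with the roles reversed we learn structural information about row $j$. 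The key observation is that the odd reflection formulas for $a'_{ij}$ given in the text, combined with the requirement that $A'$ again satisfy Lemma~\ref{lm23}, severely constrain the entries $a_{kj}, a_{jk}, a_{jk'}, \ldots$ around the singular vertex; in particular applying $r_k$ and demanding condition 3 ($a'_{ii}=2 \Rightarrow a'_{ij}\in 2^{p(i)}\mathbb{Z}_{\le 0}$) and condition 4 on the reflected matrix typically forces $n$ to be small (at most $3$) and pins down the surrounding diagram.

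Next I would run the induction on the number of vertices $n$, which is the organizing principle used throughout the paper. For $n=2$: an indecomposable $2\times 2$ matrix with a singular isotropic root is of the form $\begin{pmatrix} 0 & 0 \\ a_{21} & 2\end{pmatrix}$ or $\begin{pmatrix} 0 & 0 \\ a_{21} & 0\end{pmatrix}$ with $a_{21}\neq 0$; using finite growth (Lemma~\ref{lm21}, and Theorem~\ref{th1} forcing integrability) together with Lemma~\ref{lm23} conditions 2 and 3, one checks directly that the only surviving possibility gives $S(1,2,0)$ (in its rank-2 presentation the diagram arising in the singular analysis). For $n\geq 3$: I would pass to every principal rank-$2$ and rank-$3$ subdiagram via Lemma~\ref{lm21} and Corollary~\ref{cor1}; the latter forces the matrix $B$ built from any finite $S\subset\Pi_0$ to be an even generalized Cartan matrix of finite growth, i.e.\ of finite or affine type. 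Combined with the constraint from the previous paragraph that every reflected matrix satisfies Lemma~\ref{lm23}, this boxes the diagram into a finite check; the survivors are exactly $D(2,1,0)$ and its affinization $\widehat{D}(2,1,0)$, with $n=3$ and $n=4$ respectively.

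Finally I would verify that the three candidate superalgebras $S(1,2,0)$, $D(2,1,0)$, $\widehat{D}(2,1,0)$ genuinely have finite growth — the first two are finite-dimensional and the third is affine, so this is immediate or follows from the known growth of affine superalgebras — and that each genuinely carries a singular isotropic simple root in some base, so the classification is exhaustive and non-redundant. The main obstacle I anticipate is the rank-$3$ and rank-$4$ combinatorial analysis: unlike the regular case, where the Weyl groupoid and principal-root matrix $B$ give clean finite-type constraints, here one must track how singularity propagates under sequences of odd reflections (including singular ones, which are not invertible, so one cannot freely move back and forth), and rule out an a priori infinite family of near-affine diagrams. The saving grace is Theorem~\ref{admiss}: admissibility is preserved under \emph{all} odd reflections, so each of the finitely many reflected diagrams must independently satisfy Lemma~\ref{lm23}, and I expect this simultaneous constraint to collapse the search space to the stated three algebras after a bounded case analysis.
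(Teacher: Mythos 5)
The paper does not actually prove this theorem here --- it is stated with a citation to \cite{HS07} and only the table of answers is given --- so there is no in-paper argument to compare against. Your general toolkit (Theorem~\ref{admiss} and Lemma~\ref{lm23} to constrain $A$ and all its reflected matrices, Lemma~\ref{lm21} to pass to submatrices, Corollary~\ref{cor1} to force the principal-root matrix $B$ to be an even generalized Cartan matrix of finite growth) is the right one and is consistent with the framework the paper sets up. However, your proposal contains a concrete error at its base case. If $\alpha_k$ is a singular isotropic simple root, then by definition there is a $j$ with $a_{kj}=0$ and $a_{jk}\neq 0$; for $n=2$ this forces the entire $k$-th row of $A$ to vanish, contradicting the hypothesis that $A$ has no zero rows. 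So the rank-$2$ case is vacuous, not the source of $S(1,2,0)$: the singular case necessarily begins at $n=3$. Relatedly, $S(1,2,0)$ is a rank-$3$, infinite-dimensional superalgebra of finite growth (it sits in the family $S(1,2,\alpha)$ of Table 1, which the paper identifies with a conformal superalgebra from \cite{KL89}); it is neither rank $2$ nor finite-dimensional, so your closing verification step (``the first two are finite-dimensional'') is also wrong for this algebra and would need an actual growth estimate.

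The second, larger issue is that the heart of the argument --- the bounded case analysis for $n\geq 3$ --- is asserted rather than carried out, and the two mechanisms you lean on need more care in the singular setting. First, an odd reflection at a singular root is not invertible and does not produce an isomorphic algebra: the reflected generators generate a \emph{proper} subalgebra of $\mathfrak{g}(A)$, so transferring finite growth and admissibility across such a reflection rests entirely on Lemma~\ref{lm2} and Theorem~\ref{admiss}, not on any isomorphism $\mathfrak{g}(A)\cong\mathfrak{g}(A')$; your sketch conflates ``demanding Lemma~\ref{lm23} on the reflected matrix'' with reflecting inside a fixed algebra. Second, none of the integrable-highest-weight machinery of Sections 12 (Lemma~\ref{M}, Corollary~\ref{lm123}) is available here, because $S(1,2,0)$, $D(2,1,0)$ and $\widehat{D}(2,1,0)$ are precisely \emph{not} regular Kac-Moody superalgebras; so the ``finite check'' you invoke must be done by hand from the matrix conditions and Corollary~\ref{cor1} alone, including ruling out arbitrarily large diagrams containing a singular vertex. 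Until that combinatorial analysis is actually exhibited, the proposal is a plausible plan rather than a proof.
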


\begin{equation*}\doublespacing \begin{tabular}{|c|cc|}
  \hline
  \bf{Algebra} & \bf{Dynkin diagrams} & Table 3 \\
\hline  &&\\
$D(2,1,0)$ & $\xymatrix{\bigcirc \ar@<.5ex>[r]^{-1} & \bigotimes
\ar@<.5ex>[l]^{0} \ar@<.5ex>[r]^{1}
 & \bigcirc \ar@<.5ex>[l]^{-1} }$ &\\ &&\\
  \hline
$\widehat{D}(2,1,0)$ &
$\begin{array}{c}\xymatrix{
     & & \bigcirc  \ar@<.5ex>[ld]^{-1}  \\
     \bigcirc \ar@<.5ex>[r]^{-1} & \bigotimes \ar@<.5ex>[l]^{0} \ar@<.5ex>[ru]^{1}
     \ar@<.5ex>[rd]^{-1} & \\
     & & \bigcirc  \ar@<.5ex>[lu]^{-1}  }\end{array} $ & \\
    \hline
$S(1,2,0)$  &
$\begin{array}{c}\xymatrix{& \bigotimes \ar@<.5ex>[rdd]^{1-\alpha} \ar@<.5ex>[ldd]^{\alpha} & \\
& & \\ \bigotimes \ar@<.5ex>[ruu]^{\alpha} \ar@<.5ex>[rr]^{-1-\alpha} & &
\bigcirc \ar@<.5ex>[ll]^{-1} \ar@<.5ex>[luu]^{-1}}\end{array}$ & $\alpha \in \mathbb{Z}$ \\
\hline
\end{tabular}\end{equation*}

\section{Integrable modules}\label{rkmintegrable}\ \indent
Let $U(\mathfrak{g})$ denote the universal enveloping algebra of a Lie superalgebra $\mathfrak{g}$.  Corresponding to a set of simple roots $\Pi$ of $\mathfrak{g}$ we have the decomposition $\mathfrak{g}=\mathfrak{n}^{-}\oplus\mathfrak{h}\oplus\mathfrak{n}^{+}$.
A $\mathfrak{g}$-module $V$ is a {\em weight module} if $V=\oplus_{\mu \in \mathfrak{h}^{*}} V_{\mu}$, where
$V_{\mu}=\{v\in V \mid hv=\mu(h)v,\ \forall h\in\mathfrak{h}\}$ are finite dimensional vector spaces. If $V_{\mu}$ is non-zero, then $\mu$ is a {\em weight}.  A weight module $V$ is a {\em highest weight module} with {\em highest weight} $\lambda \in \mathfrak{h}^{*}$ if there exists a vector $v_{\lambda}\in V$ such that:
$\mathfrak{n}_{+}v_{\lambda}=0$, $hv=\lambda(h)v_{\lambda}$ for $h\in\mathfrak{h}$, and $U(\mathfrak{g})v_{\lambda}=V$.

A {\em Verma module} $$M(\lambda):=U(\mathfrak{g})\otimes_{U(\mathfrak{b}^{+})}C_{\lambda}$$ is an induced module, where   $C_{\lambda}$ is the one dimensional module over $\mathfrak{b}^{+}:=\mathfrak{h}\oplus\mathfrak{n}^{+}$ defined by $hv=\lambda(h)v$ for $v\in C_{\lambda}$ and for all $h\in\mathfrak{h}$, and $\mathfrak{n}^{+}$ acts trivially.
A Verma module is a highest weight module with highest weight $\lambda$.  Let $L(\lambda)$ denote the unique irreducible quotient of $M(\lambda)$.

Let $V$ be a weight module over $\mathfrak{g}$.  An element $x\in\mathfrak{g}$ acts {\em locally nilpotent} on $V$ if for any $v\in V$ there exists a positive integer $N$ such that $x^{N}v=0$.
Suppose $\mathfrak{g}$ is a regular Kac-Moody superalgebra.  We say that $V$ is an {\em integrable} module if for every real root $\alpha$  the element $X_{\alpha} \in \mathfrak{g}_{\alpha} $ acts locally nilpotent on $V$.

The results of this section are from \cite{H07, H08}, and are used in the classification of regular Kac-Moody superalgebras.

\begin{lemma}\label{lm411}
The adjoint module of a regular Kac-Moody superalgebra is an integrable module.
\end{lemma}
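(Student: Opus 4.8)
The plan is to reduce the statement, via reflected bases, to the integrability of $\mathfrak{g}(B)$ for a generalized Cartan matrix $B$, and then to run the standard argument with the subalgebra of locally $\operatorname{ad}_{X_i}$-nilpotent vectors; the one genuinely hard input will be the super Serre relations. First I would reduce to simple root vectors. Let $\alpha$ be a real root of $\mathfrak{g}=\mathfrak{g}(A)$; I must show $\operatorname{ad}_{X_\alpha}$ is locally nilpotent on $\mathfrak{g}$. By definition there are a reflected base $\Pi'$ with Cartan matrix $A'$ and an index $i$ with $\alpha=\alpha_i'$ or $\alpha=2\alpha_i'$. Since $\mathfrak{g}(A)$ is a regular Kac-Moody superalgebra, every matrix obtained from $A$ by odd reflections is a generalized Cartan matrix, hence satisfies condition $4'$ and so has no singular isotropic simple root; therefore every odd reflection used to produce $\Pi'$ is taken at a regular isotropic root and is invertible, and $\mathfrak{g}(A')\cong\mathfrak{g}(A)$ by an isomorphism carrying the Cartan subalgebra to the Cartan subalgebra and root spaces to root spaces. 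As $A'$ is a generalized Cartan matrix, $X_{\alpha_i'}$ is a nonzero multiple of the Chevalley generator $X_i'$ of $\mathfrak{g}(A')$, and $\operatorname{ad}_{X_{2\alpha_i'}}$ is a scalar multiple of $\operatorname{ad}_{X_i'}^2$ by the super Jacobi identity, it suffices to prove: if $B$ is a generalized Cartan matrix, then each Chevalley generator $X_i$ of $\mathfrak{g}(B)$ acts locally nilpotently on $\mathfrak{g}(B)$.

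For this, set $\mathfrak{m}_i:=\{x\in\mathfrak{g}(B):\operatorname{ad}_{X_i}^Nx=0\text{ for some }N\}$. Because $\operatorname{ad}_{X_i}$ is a super-derivation, the super-Leibniz rule gives $\operatorname{ad}_{X_i}^{\,M+N-1}[x,y]=0$ whenever $\operatorname{ad}_{X_i}^Mx=0=\operatorname{ad}_{X_i}^Ny$, so $\mathfrak{m}_i$ is a subalgebra, and it is enough to check that it contains the generators $\mathfrak{h}$, $Y_j$ and $X_j$. For $h\in\mathfrak{h}$ one has $\operatorname{ad}_{X_i}h=-\alpha_i(h)X_i$ and $\operatorname{ad}_{X_i}^2h=-\alpha_i(h)[X_i,X_i]$, which vanishes when $a_{ii}=0$ (then $[X_i,X_i]=0$ in the $\mathfrak{sl}(1|1)$ spanned by $X_i,Y_i,h_i$) or when $p(i)=0$, while if $(a_{ii},p(i))=(2,1)$ one further application lands in $\mathfrak{g}_{3\alpha_i}$, which is $0$ since by Lemma~\ref{lm20} (with $J=\{i\}$) every $\mathfrak{g}_{k\alpha_i}$ lies in the $\mathfrak{osp}(1|2)$ spanned by $X_i,Y_i,h_i$. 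For $Y_j$ with $j\neq i$ we have $[X_i,Y_j]=0$, and local nilpotency of $\operatorname{ad}_{X_i}$ on $Y_i$ and on $X_i$ follows from the finite-dimensionality of that rank-one subalgebra. So the lemma comes down to showing $X_j\in\mathfrak{m}_i$ for $j\neq i$.

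This last point is where the real content lies. If $a_{ii}=0$, then $[X_i,X_i]=0$ forces $\operatorname{ad}_{X_i}^2=\tfrac12\operatorname{ad}_{[X_i,X_i]}=0$ identically, so there is nothing to prove. If $a_{ii}=2$ and $a_{ij}=0$, then $a_{ji}=0$ by condition $4'$ and $[X_i,X_j]=0$; so assume $a_{ij}<0$, and I must show that the super Serre element $u_{ij}:=\operatorname{ad}_{X_i}^{\,N}X_j$ vanishes in $\mathfrak{g}(B)$, where $N=1-a_{ij}$ when $p(i)=0$ and $N$ is the corresponding $\mathfrak{osp}(1|2)$-exponent (governed by $a_{ij}\in 2\mathbb{Z}_{<0}$) when $p(i)=1$. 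One verifies that $u_{ij}$ is a singular vector of $\tilde{\mathfrak{g}}(B)$: $[Y_k,u_{ij}]=0$ for $k\notin\{i,j\}$ for weight reasons; for $k=j$, since $[Y_j,X_i]=0$, $[Y_j,u_{ij}]$ is a multiple of $\operatorname{ad}_{X_i}^{\,N}h_j$, which vanishes once $N$ is large enough by the same $[X_i,X_i]$- and $\mathfrak{g}_{3\alpha_i}$-considerations as above; and for $k=i$ by the $\mathfrak{sl}(2)$- (resp. $\mathfrak{osp}(1|2)$-) identity $[Y_i,\operatorname{ad}_{X_i}^{\,N}X_j]=N(a_{ij}+N-1)\operatorname{ad}_{X_i}^{\,N-1}X_j$, where $a_{ij}=\alpha_j(h_i)$ is the $h_i$-weight of $X_j$, $[Y_i,X_j]=0$, and $N$ is chosen precisely so that the coefficient is zero. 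Hence the ideal of $\tilde{\mathfrak{g}}(B)$ generated by the $u_{ij}$ together with the mirror elements built from the $Y$'s intersects $\mathfrak{h}$ trivially, so it lies in $r(B)$ and $u_{ij}=0$ in $\mathfrak{g}(B)$.

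I expect this last step — establishing that these Serre-type relations already hold in $\mathfrak{g}(B)$, equivalently that the ideal they generate does not meet $\mathfrak{h}$ — to be the main obstacle: it is the super-analogue of the Gabber--Kac theorem, and in the presence of $\mathfrak{osp}(1|2)$-type simple roots it relies on the detailed structure theory of contragredient Lie superalgebras (and possibly a few extra relations beyond the naive Serre ones), so I would invoke it from the literature rather than reprove it. Granting it, $\mathfrak{m}_i$ contains $\mathfrak{h}$ and all $X_j$ and $Y_j$, whence $\mathfrak{m}_i=\mathfrak{g}(B)$; combined with the first paragraph, every real root vector of $\mathfrak{g}(A)$ acts locally nilpotently on $\mathfrak{g}(A)$, and since all root spaces are finite-dimensional the adjoint module is a weight module, hence an integrable module.
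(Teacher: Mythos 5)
The paper does not actually prove this lemma in the text you were given --- it is imported from \cite{H07,H08} and is, in effect, the ``equivalently'' clause in the definition of a regular Kac--Moody superalgebra in Section~\ref{rkmdef}. Your argument is the standard one and its core is sound: reduce to simple root vectors of a reflected base, observe that regularity makes the odd reflections invertible so that the reflected base lives in a superalgebra isomorphic to $\mathfrak{g}(A)$ with a generalized Cartan matrix, and then run the ``locally nilpotent vectors form a subalgebra containing the generators'' argument, whose only nontrivial input is that the Serre elements $u_{ij}=\operatorname{ad}_{X_i}^{N}X_j$ vanish in $\mathfrak{g}(B)$. One correction in your favor: what you defer to the literature as ``the super-analogue of the Gabber--Kac theorem'' is not what you need. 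Gabber--Kac is the hard converse, that the Serre relations \emph{generate} $r(B)$; all you need is that they \emph{hold} in $\mathfrak{g}(B)$, i.e. that the ideal they generate meets $\mathfrak{h}$ trivially --- and your singular-vector computation already proves this, since $[Y_k,u_{ij}]=0$ for all $k$ forces $\operatorname{ad}(U(\tilde{\mathfrak{g}}))u_{ij}=\operatorname{ad}(U(\tilde{\mathfrak{n}}_+))u_{ij}\subset\tilde{\mathfrak{n}}_+$. So that step is complete as written (modulo checking the $\mathfrak{osp}(1|2)$ exponent, which works because condition~3 of Lemma~\ref{lm23} gives $a_{ij}\in 2\mathbb{Z}_{\leq 0}$ when $a_{ii}=2$, $p(i)=1$).

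The genuine gap is in your first reduction. A real root is one for which $\alpha$ or $\tfrac12\alpha$ is simple in a base obtained by a sequence of even \emph{and} odd reflections, but you only account for odd reflections; many real roots (e.g. all non-simple even real roots of an affine Lie algebra) are reached only through even reflections, and for those $X_\alpha$ is not a Chevalley generator of any odd-reflected matrix. The fix is the usual induction on the length of the reflection sequence: once you know that the simple root vectors of the current base act locally nilpotently (which your Serre-relation argument gives, since even reflections do not change the Cartan matrix up to reordering, so every intermediate matrix is still a generalized Cartan matrix), the even reflection $r_k$ is realized by the automorphism $\exp(\operatorname{ad}_{X_k})\exp(-\operatorname{ad}_{Y_k})\exp(\operatorname{ad}_{X_k})$ of $\mathfrak{g}(A)$, and conjugating by an automorphism preserves local nilpotency of $\operatorname{ad}$. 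Without this step the proof covers only the real roots visible from odd-reflected bases, which is strictly weaker than integrability as defined in Section~\ref{rkmintegrable}.
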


\begin{lemma}\label{M}
Suppose $\Gamma$ and $\Gamma' = \Gamma \cup \{v_{n+1}\}$ are connected regular Kac-Moody diagrams.  Let $\mathfrak{g}(A)$
(resp. $\mathfrak{g}(A')$) be the Kac-Moody superalgebra with diagram $\Gamma$ (resp. $\Gamma'$), and let $Y_{n+1} \in
\mathfrak{g}(A')_{-\alpha_{n+1}}$ be the generator corresponding to the vertex $v_{n+1}$. Then the submodule $M$ of
$\mathfrak{g}(A')$ generated by $\mathfrak{g}(A)$ acting on $Y_{n+1}$ is an integrable highest weight module over the
subalgebra $\mathfrak{g}(A)$.
\end{lemma}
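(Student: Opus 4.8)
The plan is to work entirely inside $\mathfrak{g}(A')$ using its principal $\mathbb{Z}$-grading and the triangular decomposition $\mathfrak{g}(A') = \mathfrak{n}^{-} \oplus \mathfrak{h} \oplus \mathfrak{n}^{+}$ relative to $\Pi' = \Pi \cup \{\alpha_{n+1}\}$. Since $\Gamma$ is a subdiagram of $\Gamma'$, Lemma~\ref{lm20} identifies $\mathfrak{g}(A)$ with (the derived part of) the subalgebra $\mathfrak{a}_{J}$ of $\mathfrak{g}(A')$ generated by $X_i, Y_i$ for $i \in J := \{1,\dots,n\}$, together with a piece of $\mathfrak{h}$. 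First I would observe that $Y_{n+1}$ is a highest weight vector for $\mathfrak{g}(A)$: indeed $[X_i, Y_{n+1}] = \delta_{i,n+1} h_i = 0$ for all $i \in J$ by the defining Chevalley relations of $\mathfrak{g}(A')$, so $\mathfrak{n}^{+}_{\mathfrak{g}(A)}$ annihilates $Y_{n+1}$, and $h \cdot Y_{n+1} = -\alpha_{n+1}(h) Y_{n+1}$ for $h$ in the Cartan of $\mathfrak{g}(A)$. Hence $M := U(\mathfrak{g}(A)) \cdot Y_{n+1} = U(\mathfrak{n}^{-}_{\mathfrak{g}(A)}) \cdot Y_{n+1}$ is a highest weight $\mathfrak{g}(A)$-module with highest weight $\lambda := -\alpha_{n+1}|_{\mathfrak{h}_{\mathfrak{g}(A)}}$.

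The weight space finiteness needed to make $M$ a genuine weight module follows because $M$ sits inside $\mathfrak{g}(A')$, whose root spaces are finite dimensional; more precisely each weight space $M_{\mu}$ is contained in the finite-dimensional root space $\mathfrak{g}(A')_{\mu}$ (after translating weights of $\mathfrak{g}(A)$ back to roots of $\mathfrak{g}(A')$ via $\mu \mapsto \mu - \alpha_{n+1}$). So $M$ is a bona fide highest weight module over $\mathfrak{g}(A)$.

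It remains to show $M$ is integrable, i.e. that for every real root $\alpha$ of $\mathfrak{g}(A)$ the root vector $X_{\alpha} \in \mathfrak{g}(A)_{\alpha}$ acts locally nilpotently on $M$. Here is where I would invoke Lemma~\ref{lm411}: the adjoint module of the regular Kac-Moody superalgebra $\mathfrak{g}(A')$ is integrable, so every real-root vector of $\mathfrak{g}(A')$ acts locally nilpotently on all of $\mathfrak{g}(A')$, in particular on the subspace $M$. The key compatibility point — and I expect this to be the main obstacle to state cleanly — is that a real root $\alpha$ of the subalgebra $\mathfrak{g}(A)$ is also a real root of the ambient $\mathfrak{g}(A')$, with the same root vector up to scalar: $\alpha$ is obtained from a simple root of $\Pi$ (or its half) by a sequence of reflections using only simple roots in $\Pi \subset \Pi'$, and such reflections of $\Pi'$ restrict to the corresponding reflections defining reality in $\mathfrak{g}(A)$ (one must check that reflecting at $\alpha_i$, $i \in J$, does not move $\alpha_{n+1}$ in a way that breaks this — but the formulas for odd/even reflections show $r_i(\alpha_j) \in \operatorname{span}(\Pi)$ for $i,j \in J$, so the reflected bases of $\mathfrak{g}(A)$ embed in reflected bases of $\mathfrak{g}(A')$). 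Granting this, $X_{\alpha} \in \mathfrak{g}(A')_{\alpha}$ acts locally nilpotently on $M$ because $M \subseteq \mathfrak{g}(A')$ and Lemma~\ref{lm411} applies, and since $M$ is closed under the $\mathfrak{g}(A)$-action this is exactly local nilpotence of $X_{\alpha}$ as an operator on the $\mathfrak{g}(A)$-module $M$. This establishes that $M$ is an integrable highest weight $\mathfrak{g}(A)$-module, as claimed. $\qed$
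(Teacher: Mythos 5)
Your proposal is correct and follows essentially the same route as the paper's proof: the highest weight property from $[X_i,Y_{n+1}]=0$ with highest weight $-\alpha_{n+1}$, the observation that a real root of $\mathfrak{g}(A)$ is a real root of $\mathfrak{g}(A')$, and integrability via Lemma~\ref{lm411} applied to the adjoint module of $\mathfrak{g}(A')$ restricted to the subspace $M$. The extra details you supply (weight-space finiteness via root spaces of $\mathfrak{g}(A')$, and the compatibility of reflected bases) only elaborate steps the paper leaves implicit.
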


\begin{proof}
The fact that $M$ is a highest weight module follows immediately from $[X_i,Y_{n+1}]=0$ for all $i=1,\dots,n$. The module
$M$ has highest weight $-\alpha_{n+1}$.  A real root $\alpha$ of the subalgebra $\mathfrak{g}(A)$ is also a real root of
$\mathfrak{g}(A')$.  By Lemma~\ref{lm411}, the adjoint module of $\mathfrak{g}(A')$ is integrable.  Thus for each real
root $\alpha$ of $\mathfrak{g}(A)$ we have that $Y_{\alpha} \in \mathfrak{g}(A)_{-\alpha}$  acts locally nilpotently on
the submodule $M$ of $\mathfrak{g}(A')$.  Hence the submodule $M$ is an integrable highest weight module over the
subalgebra $\mathfrak{g}(A)$.
\end{proof}

\begin{corollary}
Suppose $\Gamma$ is a diagram for a regular Kac-Moody superalgebra $\mathfrak{g}(A)$ which does not have non-trivial
irreducible integrable highest weight modules. Then $\Gamma$ is not a proper subdiagram of a connected regular Kac-Moody diagram.
\end{corollary}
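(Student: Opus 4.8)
## Proof proposal

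The plan is to prove the contrapositive-flavored statement directly: assume $\Gamma$ is a regular Kac-Moody diagram with no non-trivial irreducible integrable highest weight modules, and suppose for contradiction that $\Gamma$ sits inside a connected regular Kac-Moody diagram $\Gamma' = \Gamma \cup \{v_{n+1}, \dots\}$. Since $\Gamma'$ is connected, we may pass to a subdiagram $\Gamma \cup \{v_{n+1}\}$ that is itself connected (adjoin vertices one at a time along a path connecting $v_{n+1}$ to $\Gamma$), and this subdiagram is again a regular Kac-Moody diagram because every reflected diagram of $\Gamma'$ restricts to a generalized Cartan matrix on any subset of vertices, so the induced-subdiagram property of being regular Kac-Moody is inherited. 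Hence without loss of generality $\Gamma' = \Gamma \cup \{v_{n+1}\}$ is connected.

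Next I would invoke Lemma~\ref{M} applied to this pair $\Gamma \subset \Gamma'$: the submodule $M$ of $\mathfrak{g}(A')$ generated by letting $\mathfrak{g}(A)$ act on the generator $Y_{n+1}$ is an integrable highest weight module over $\mathfrak{g}(A)$ with highest weight $-\alpha_{n+1}$. The key observation is that $M$ is non-trivial: since $\Gamma'$ is connected and $v_{n+1}$ is joined to some vertex $v_i$ of $\Gamma$, at least one of $a_{i,n+1}$, $a_{n+1,i}$ is nonzero, so one of the brackets $[X_i, Y_{n+1}]$ or $[Y_i, Y_{n+1}]$ is a nonzero element of $M$ distinct (as a weight vector) from $Y_{n+1}$ itself — concretely, $[Y_i, Y_{n+1}] \neq 0$ has weight $-\alpha_i - \alpha_{n+1}$. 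Thus $M$ has at least two weight spaces and is not the trivial one-dimensional module. Passing to an irreducible subquotient of $M$ in the highest-weight direction (the irreducible quotient $L(-\alpha_{n+1})$ of the highest weight module $M$) yields an irreducible integrable highest weight $\mathfrak{g}(A)$-module which is non-trivial, contradicting the hypothesis on $\Gamma$.

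The one point requiring a little care — and the main potential obstacle — is the claim that the irreducible quotient $L(-\alpha_{n+1})$ of $M$ remains \emph{integrable} and \emph{non-trivial}. Integrability of the irreducible quotient follows because local nilpotency of each $Y_\alpha$ (for $\alpha$ a real root of $\mathfrak{g}(A)$) descends to quotient modules. Non-triviality of $L(-\alpha_{n+1})$ requires that $-\alpha_{n+1}$ not be annihilated in a way that collapses the module; but since $\mathfrak{g}(A)$ acts non-trivially on $Y_{n+1}$ (as shown above, $[Y_i,Y_{n+1}]\ne 0$ lies in $M$), the module $M$ is not a trivial module, hence $-\alpha_{n+1} \neq 0$ on $\mathfrak{h}$ restricted appropriately and $L(-\alpha_{n+1})$ is non-trivial. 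One should also double-check the reduction step: if the path from $v_{n+1}$ into $\Gamma$ passes through vertices outside $\Gamma$, adjoining them produces a larger connected diagram $\Gamma''$ still containing $\Gamma$ as a proper subdiagram, and it suffices to derive the contradiction from $\Gamma''$; alternatively one takes $v_{n+1}$ to be \emph{any} vertex of $\Gamma' \setminus \Gamma$ adjacent to $\Gamma$, which exists by connectedness, and applies Lemma~\ref{M} with that single extra vertex (together with whatever minimal connected completion is needed). This is routine once the non-triviality of $M$ is established, which is the crux.
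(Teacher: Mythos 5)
Your overall strategy is the paper's: reduce to the case $\Gamma'=\Gamma\cup\{v_{n+1}\}$ connected, apply Lemma~\ref{M}, and derive a contradiction from the irreducible quotient $L(-\alpha_{n+1})$. But there is a genuine gap at exactly the point you flag as the crux. You argue: connectedness gives $a_{i,n+1}\neq 0$ or $a_{n+1,i}\neq 0$ for some $i$, hence $[Y_i,Y_{n+1}]\neq 0$, hence $M$ is not the trivial module, \emph{hence} $-\alpha_{n+1}\neq 0$ and $L(-\alpha_{n+1})$ is non-trivial. The last inference is invalid: a highest weight module can be non-trivial (even infinite-dimensional) while its highest weight is zero, in which case its irreducible quotient is still the trivial module $L(0)$ --- the Verma module $M(0)$ is the standard example. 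In the situation at hand the failure mode is concrete: if $a_{i,n+1}=0$ for all $i\le n$ but $a_{n+1,j}\neq 0$ for some $j$, then $[Y_j,Y_{n+1}]\neq 0$ (bracketing with $X_{n+1}$ returns a nonzero multiple of $Y_j$), so $M$ has at least two weight spaces and is not trivial, yet $-\alpha_{n+1}$ vanishes on $h_1,\dots,h_n$ and $L(-\alpha_{n+1})$ \emph{is} the trivial module. So non-triviality of $M$ buys you nothing toward non-triviality of $L(-\alpha_{n+1})$.

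What closes the gap --- and what the paper's proof actually uses --- is regularity. Since $\Gamma'$ is a regular Kac-Moody diagram, $A'$ is a generalized Cartan matrix and satisfies condition $4'$: $a_{i,n+1}=0$ implies $a_{n+1,i}=0$. Hence if $L(-\alpha_{n+1})$ were trivial, i.e.\ $\alpha_{n+1}(h_i)=a_{i,n+1}=0$ for all $i\le n$, then also $a_{n+1,i}=0$ for all $i$, and $v_{n+1}$ would be disconnected from $\Gamma$, contradicting connectedness. Equivalently, run in your direction: connectedness \emph{together with regularity} forces some $a_{i,n+1}\neq 0$ (not merely a nonzero entry in one of the two directions), and that is what makes the highest weight $-\alpha_{n+1}$ nonzero and $L(-\alpha_{n+1})$ a non-trivial irreducible integrable highest weight module over $\mathfrak{g}(A)$. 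Your reduction to a single added vertex adjacent to $\Gamma$ and your remark that integrability passes to quotients are both fine; only the non-triviality step needs to be rerouted through condition $4'$.
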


\begin{proof}
Suppose  $\Gamma' = \Gamma \cup \{v_{n+1}\}$ is a connected regular Kac-Moody diagram.  By Lemma~\ref{M}, $M$ is an integrable highest weight module for $\mathfrak{g}(A)$ with irreducible quotient isomorphic to $L(-\alpha_{n+1})$. Since this module must be trivial, $-\alpha_{n+1}=0$.  This implies $a_{i,n+1}=0$ for $i=1,\dots,n$.  Since $\mathfrak{g}(A)$ is a regular Kac-Moody algebra, this implies $a_{n+1,j}=0$ for $j=1,\dots,n$. But this implies that the vertex $v_{n+1}$ is not connected to the diagram $\Gamma$, which is a contradiction. Since any subdiagram of a regular Kac-Moody diagram is a regular Kac-Moody diagram, the corollary follows.
\end{proof}

Our definition of integrable module coincides with the standard definition of integrable module in the case that
$\mathfrak{g}(A)$ is an affine Lie superalgebra.  This follows from the fact that every even root of an affine Lie
superalgebra with non-zero length is real, as was shown in \cite{S88}.
The {\em defect} of $\mathfrak{g}(A)$ is the dimension of a maximal isotropic
subspace of $\mathfrak{h}^{*}_{\mathbb{R}}:=\sum_{\alpha\in\Delta}\mathbb{R}\alpha$ \cite{KW94}.
It was shown in \cite{KW01}, that non-twisted affine Lie superalgebras with defect  strictly greater than one do not have non-trivial irreducible integrable highest weight modules.
We show that this also holds for twisted affine Lie superalgebras in \cite{H07, H08}.

\begin{theorem}[Kac, Wakimoto \cite{KW01}]\label{p1} The only non-twisted affine Lie superalgebras with non-trivial irreducible integrable highest weight
modules are $\mathfrak{osp}(1|2n)^{(1)}$, $\mathfrak{osp}(2|2n)^{(1)}$ and $\mathfrak{sl}(1|n)^{(1)}$.
\end{theorem}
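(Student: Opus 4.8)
The plan is to cut the problem down to a short list of candidate algebras using the \emph{defect}, and then to handle each candidate by either exhibiting a non-trivial integrable module or deriving a contradiction from the dominant-integrality constraints forced by odd reflections.

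\emph{Reduction by defect.} Every non-twisted affine Lie superalgebra is $\mathfrak{g}^{(1)}$ for $\mathfrak{g}$ a finite-dimensional basic classical simple Lie superalgebra, and its defect equals that of $\mathfrak{g}$. By the result of \cite{KW01} quoted above, if $\operatorname{defect}\mathfrak{g}>1$ then $\mathfrak{g}^{(1)}$ has no non-trivial irreducible integrable highest weight module, so we may assume $\operatorname{defect}\mathfrak{g}\le 1$. Going through the basic classical Lie superalgebras, the surviving cases are the series $\mathfrak{sl}(1|n)$, $\mathfrak{osp}(1|2n)$, $\mathfrak{osp}(2|2n)$, $\mathfrak{osp}(3|2n)$ together with $D(2,1;\alpha)$, $F(4)$, $G(3)$ (here $\mathfrak{osp}(4|2n)$ has defect $2$ for $n\ge2$, and $\mathfrak{osp}(4|2)\cong D(2,1;1)$ is already listed). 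It remains to decide which of these $\mathfrak{g}^{(1)}$ actually carry a non-trivial irreducible integrable $L(\lambda)$.

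\emph{Existence.} For $\mathfrak{osp}(1|2n)^{(1)}$ there are no isotropic roots, the even part is $\mathfrak{sp}(2n)^{(1)}$, and the integrable highest weight theory runs parallel to the Lie algebra case, so every dominant integral weight of $\mathfrak{sp}(2n)^{(1)}$ gives a non-trivial $L(\lambda)$. For $\mathfrak{sl}(1|n)^{(1)}$ and $\mathfrak{osp}(2|2n)^{(1)}$ one produces modules explicitly, e.g. by affinizing the natural module: the resulting highest weights are dominant integral for the even affine subalgebra, and one checks that $X_\alpha$ acts locally nilpotently for every real root $\alpha$ --- for even real $\alpha$ by the $\mathfrak{sl}_2$-theory, while real isotropic roots impose no extra nilpotency condition beyond finiteness of weight spaces. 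Hence these three families are genuinely on the list.

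\emph{Non-existence and the main obstacle.} Suppose $L(\lambda)$ is a non-trivial irreducible integrable module, $\lambda\neq 0$, for one of $\mathfrak{osp}(3|2n)^{(1)}$, $D(2,1;\alpha)^{(1)}$, $F(4)^{(1)}$, $G(3)^{(1)}$. Restricting $L(\lambda)$ to the even affine subalgebra $\mathfrak{g}_{\bar0}^{(1)}$ (respectively $A_1^{(1)}\oplus C_n^{(1)}$, $A_1^{(1)}\oplus A_1^{(1)}\oplus A_1^{(1)}$, $A_1^{(1)}\oplus B_3^{(1)}$, $A_1^{(1)}\oplus G_2^{(1)}$) yields an integrable highest weight module there --- using that every even root of non-zero length of a non-twisted affine Lie superalgebra is real \cite{S88}, and that local nilpotence is inherited by subalgebras --- so $\lambda$ is dominant integral for $\mathfrak{g}_{\bar0}^{(1)}$. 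Applying odd reflections, for any base $\Pi'$ obtained from $\Pi$ by a sequence of odd reflections (only finitely many up to the Weyl group), $L(\lambda)$ is again an integrable highest weight module for the same algebra in the base $\Pi'$, with highest weight $\lambda'$ equal to $\lambda$ minus a sum of simple isotropic roots, hence $\lambda'$ is dominant integral for the even part attached to $\Pi'$; intersecting these constraints over all reflected bases should force $\lambda=0$. For the exceptional algebras this is a finite explicit check with the few reflected diagrams; for $\mathfrak{osp}(3|2n)^{(1)}$ two reflected bases already exchange the two $\mathfrak{so}(3)=\mathfrak{sl}(2)$-level inequalities into opposite ones, killing the $\mathfrak{sl}(2)$-component of $\lambda$ and then, by connectedness, all of $\lambda$. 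The hard part is exactly this step: showing the dominant-integrality conditions from all reflected bases are jointly unsolvable except at $\lambda=0$, and doing it uniformly rather than family by family --- the clean route is to repackage them as the Kac--Wakimoto equations $(\lambda+\hat\rho\mid\beta)=0$ on isotropic simple roots together with integrality on even coroots, and to show these have no non-zero solution precisely when the isotropic node has an even neighbourhood of rank $\ge 2$, which is what separates $\mathfrak{osp}(3|2n)$, $D(2,1;\alpha)$, $F(4)$, $G(3)$ from $\mathfrak{sl}(1|n)$ and $\mathfrak{osp}(2|2n)$. Combining the three steps gives the theorem.
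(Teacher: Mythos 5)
First, a point of reference: the paper does not prove this statement --- Theorem~\ref{p1} is quoted from Kac--Wakimoto \cite{KW01} without proof, so there is no in-paper argument to measure yours against. Your sketch does follow the strategy actually used in \cite{KW01} (and the one the paper's companion result, Theorem~\ref{p2}, follows in the twisted case): reduce by defect, then test dominant integrality of the highest weight against every base obtained by odd reflections. Your list of defect~$\leq 1$ candidates is correct (including the remark that $\mathfrak{osp}(4|2n)$ only survives as $D(2,1;1)$), your identification of the even parts $A_1^{(1)}\oplus C_n^{(1)}$, $A_1^{(1)}\oplus A_1^{(1)}\oplus A_1^{(1)}$, $A_1^{(1)}\oplus B_3^{(1)}$, $A_1^{(1)}\oplus G_2^{(1)}$ is right, and the observation that isotropic real root vectors square to zero (so impose no extra integrability condition) is correct.

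As a proof, however, the proposal has a genuine gap exactly where the content of the theorem lies: the exclusion of $\mathfrak{osp}(3|2n)^{(1)}$, $D(2,1;\alpha)^{(1)}$, $F(4)^{(1)}$ and $G(3)^{(1)}$. You write that intersecting the dominant-integrality constraints over all reflected bases ``should force $\lambda=0$'' and point to the conditions $(\lambda+\hat\rho\mid\beta)=0$ as the ``clean route,'' but neither the finite check for the exceptional algebras nor the computation for $\mathfrak{osp}(3|2n)^{(1)}$ is carried out; the one concrete claim you make (two reflected bases turning the two $\mathfrak{sl}(2)$-level inequalities into opposite ones) is asserted without tracking how the highest weight actually changes under an odd reflection ($\lambda'=\lambda-\beta$ only when $\lambda(h_\beta)\neq 0$), which is precisely the bookkeeping that makes or breaks the argument. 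A second caveat: your reduction step invokes the defect~$>1$ vanishing theorem from the very paper whose main result you are proving. That is admissible here because the present paper also quotes that vanishing statement as a separately citable fact, but it means your argument is not self-contained --- the defect~$>1$ case is itself a substantial part of \cite{KW01}. In short: correct skeleton, but the decisive computations are missing.
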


\begin{theorem}[Hoyt \cite{H07, H08}]\label{p2}
Let $\mathfrak{g}^{(m)}$ be a twisted affine Lie superalgebra which is not
$\mathfrak{osp}(2|2n)^{(2)}$, $\mathfrak{sl}(1|2n)^{(2)}$ or $\mathfrak{sl}(1|2n+1)^{(4)}$.  Then an irreducible integrable highest weight module over $\mathfrak{g}^{(m)}$ is trivial.
\end{theorem}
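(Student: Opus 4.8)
The plan is to classify the twisted affine Lie superalgebras and then eliminate all but the three listed families by exhibiting, for each of the others, a real root whose corresponding $\mathfrak{sl}_2$ (or $\mathfrak{osp}(1|2)$) triple forces any would-be highest weight to be trivial. First I would recall the complete list of twisted affine Lie superalgebras and their Dynkin diagrams, together with the description of their root systems (in particular, via \cite{S88}, every even root of nonzero length is real, so integrability is detected on the even real roots). For each such algebra $\mathfrak{g}^{(m)}$ appearing in the list, I pass to the finite-dimensional ``horizontal'' subalgebra $\overline{\mathfrak{g}}$ and use the standard fact that an integrable highest weight module over the affine algebra restricts to an integrable highest weight module over $\overline{\mathfrak{g}}$; hence the highest weight $\lambda$ must be dominant integral with respect to all principal roots, and additionally the affine node imposes one more dominance inequality.

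The key computation is then a defect argument, mirroring the non-twisted case of Theorem~\ref{p1} due to Kac--Wakimoto: if $\mathfrak{g}^{(m)}$ has defect strictly greater than one, then there is no nontrivial irreducible integrable highest weight module. So the second step is to compute the defect of each twisted affine Lie superalgebra. Using the realization of these algebras as twisted loop algebras, the defect of $\mathfrak{g}^{(m)}$ equals the defect of the underlying finite-dimensional contragredient superalgebra (the isotropic subspace of $\mathfrak{h}^*_{\mathbb{R}}$ is inherited), and a direct inspection of Kac's list of finite-dimensional simple contragredient Lie superalgebras shows that $\mathfrak{osp}(2|2n)$, $\mathfrak{sl}(1|n)$ (and their twisted versions $\mathfrak{osp}(2|2n)^{(2)}$, $\mathfrak{sl}(1|2n)^{(2)}$, $\mathfrak{sl}(1|2n+1)^{(4)}$) are precisely the defect-one cases among those admitting a simple isotropic root, while all the other twisted affine superalgebras have defect $\geq 2$ or no isotropic root at all (and in the latter case, being an ordinary affine Kac--Moody Lie algebra or $\mathfrak{osp}(1|2n)^{(m)}$-type object whose even part dominance already forces triviality of the affine-node inequality, one checks directly that $\lambda$ must vanish). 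Thus for every $\mathfrak{g}^{(m)}$ not in the exceptional triple, one invokes the defect-$\geq 2$ vanishing statement (the twisted analogue of \cite{KW01}, which is the content we are allowed to assume, combined with the non-twisted Theorem~\ref{p1}).

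The remaining step is to handle the borderline cases by a direct argument rather than appealing to the defect principle as a black box. Concretely, for a twisted affine $\mathfrak{g}^{(m)}$ with defect $\geq 2$ one picks two ``orthogonal'' isotropic real roots $\beta_1,\beta_2$ and the associated $\mathfrak{sl}(1|1)$-triples; combining the highest weight condition with integrability along the even real roots that connect these isotropic directions yields $\lambda(h_\alpha)=0$ for every principal root $\alpha$, hence $\lambda$ is orthogonal to all of $\Delta$, hence $L(\lambda)$ is one-dimensional, hence trivial as a module over the simple-type algebra. This is the place where I would be careful: one must verify that the chain of even real roots linking the chosen isotropic roots genuinely exists in the twisted root system (the folding can collapse some roots), so the main obstacle is the bookkeeping of which even roots survive the twisting and confirming that in every non-exceptional twisted case enough real even roots remain to propagate the vanishing of $\lambda$ across the whole diagram. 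Once that is checked case by case against the finite list of twisted affine Lie superalgebras, the theorem follows.
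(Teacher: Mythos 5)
The paper does not actually prove Theorem~\ref{p2} here: it is stated with a citation to \cite{H07, H08}, and the only indication of method is the sentence preceding it, namely that the Kac--Wakimoto defect criterion of \cite{KW01} (defect strictly greater than one forces all irreducible integrable highest weight modules to be trivial) ``also holds for twisted affine Lie superalgebras.'' Your overall plan --- enumerate the twisted affine superalgebras, extend the defect criterion to the twisted setting, and treat the low-defect cases directly --- is therefore the intended route. But as written the proposal has two genuine gaps. The first is a circularity: you describe the defect-$\geq 2$ vanishing statement for \emph{twisted} affine superalgebras as ``the content we are allowed to assume.'' It is not; \cite{KW01} covers only the non-twisted case, and the twisted extension is precisely the new content of this theorem. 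Your third paragraph, which sketches deriving $\lambda(h_\alpha)=0$ from two orthogonal isotropic real roots together with integrability along connecting even real roots, is therefore not a ``remaining step'' but the heart of the argument, and it is left at the level of a plan: the Kac--Wakimoto mechanism requires tracking odd reflections at isotropic roots $\beta$ with $(\lambda+\rho,\beta)=0$ and verifying which roots survive the folding, which is exactly the bookkeeping you defer.

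The second gap is in the defect bookkeeping itself. Defect one does not characterize the exceptional families: already in the non-twisted case $D(2,1,\alpha)^{(1)}$, $G(3)^{(1)}$ and $F(4)^{(1)}$ have defect one yet do not appear in Theorem~\ref{p1}, so your claim that $\mathfrak{sl}(1|n)$ and $\mathfrak{osp}(2|2n)$ are ``precisely the defect-one cases'' is false, and ``defect $\geq 2$'' cannot be the only elimination tool. Consequently every twisted affine superalgebra of defect $\leq 1$ lying outside the exceptional triple must be disposed of by a separate direct computation, and your proposal does not identify which algebras these are, let alone carry out the computation. Two further points need care: the assertion that the defect of $\mathfrak{g}^{(m)}$ equals that of the underlying finite-dimensional algebra is not automatic, since twisting changes $\sum_{\alpha\in\Delta}\mathbb{R}\alpha$; and the theorem also covers $q(n)^{(2)}$, which is non-symmetrizable, so there is no invariant bilinear form and the defect machinery does not even apply to it without modification --- it requires its own argument, which the proposal omits.
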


\begin{corollary}\label{lm123}
Suppose $\mathfrak{g}(A)$ is a regular Kac-Moody superalgebra with a simple isotropic root such that $\Gamma_{A}$ satisfies condition (\ref{gammaproperty}).  If $\mathfrak{g}(A)$ is not of finite type and is not $\mathfrak{sl}(1|n)^{(1)}$, $\mathfrak{osp}(2|2n)^{(1)}$, $S(1,2,\alpha)$ or $Q^{\pm}(m,n,t)$, then all irreducible integrable highest weight modules are trivial.
\end{corollary}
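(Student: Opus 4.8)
The plan is to argue by a case analysis over the finite list of regular Kac-Moody superalgebras that satisfy condition~(\ref{gammaproperty}) and have a simple isotropic root, excluding the ones named in the statement, and to show that for each remaining algebra every irreducible integrable highest weight module is trivial. The key point is that condition~(\ref{gammaproperty}) is extremely restrictive: it says that after any sequence of odd reflections, every proper subdiagram has finite type. Combined with the inductive classification scheme sketched in Section~\ref{rkmdef} (and Theorems~\ref{two}, \ref{rkmtheorem}), the algebras of this type are precisely the finite-type diagrams together with a short explicit list of one-vertex extensions, which turn out to be exactly the affine Lie superalgebras (twisted and non-twisted) coming from finite-type Dynkin diagrams, plus the exceptional $3\times 3$ diagrams $S(1,2,\alpha)$ and $Q^{\pm}(m,n,t)$. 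So after excluding finite type, $\mathfrak{sl}(1|n)^{(1)}$, $\mathfrak{osp}(2|2n)^{(1)}$, $S(1,2,\alpha)$ and $Q^{\pm}(m,n,t)$, what remains is: all the other non-twisted affine Lie superalgebras, and all the twisted affine Lie superalgebras.

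First I would handle the non-twisted affine case. By Theorem~\ref{p1} (Kac--Wakimoto), the only non-twisted affine Lie superalgebras admitting a non-trivial irreducible integrable highest weight module are $\mathfrak{osp}(1|2n)^{(1)}$, $\mathfrak{osp}(2|2n)^{(1)}$ and $\mathfrak{sl}(1|n)^{(1)}$. The latter two are excluded by hypothesis; and $\mathfrak{osp}(1|2n)^{(1)}$ has no simple isotropic root (its $\mathfrak{osp}(1|2n)$ part and the affine extension both avoid isotropic simple roots), so it does not satisfy the standing hypothesis of the corollary. Hence for every non-twisted affine $\mathfrak{g}(A)$ in our remaining list, all irreducible integrable highest weight modules are trivial.

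Second, the twisted affine case. By Theorem~\ref{p2}, an irreducible integrable highest weight module over a twisted affine Lie superalgebra is trivial unless the algebra is $\mathfrak{osp}(2|2n)^{(2)}$, $\mathfrak{sl}(1|2n)^{(2)}$ or $\mathfrak{sl}(1|2n+1)^{(4)}$. The last two are exactly the family $q(n)^{(2)}$ in disguise (up to the identifications in Theorem~\ref{rkmtheorem} / Table~1), which has a simple isotropic root but — one must check — is ruled out not by name but because $q(n)^{(2)}$ fails condition~(\ref{gammaproperty}): its loop-like Dynkin diagram contains proper subdiagrams that are themselves affine (not finite) type. Similarly $\mathfrak{osp}(2|2n)^{(2)}$ must be shown to fail~(\ref{gammaproperty}) or else to coincide with an algebra already on the excluded list. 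So the substantive work here is a diagram-combinatorial verification: enumerate the twisted affine superalgebra diagrams with a simple isotropic root, and confirm that the three potential exceptions of Theorem~\ref{p2} are precisely those that either violate~(\ref{gammaproperty}) or are among $\mathfrak{sl}(1|n)^{(1)}$, $\mathfrak{osp}(2|2n)^{(1)}$, $S(1,2,\alpha)$, $Q^{\pm}(m,n,t)$; once that is done, every remaining twisted affine algebra falls under the "trivial only" conclusion of Theorem~\ref{p2}.

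The main obstacle, and where most of the care goes, is the bookkeeping in the second step: matching the abstract twisted-affinization names $\mathfrak{sl}(1|2n)^{(2)}$, $\mathfrak{sl}(1|2n+1)^{(4)}$, $\mathfrak{osp}(2|2n)^{(2)}$ against the concrete list of regular Kac-Moody diagrams satisfying~(\ref{gammaproperty}), and verifying that the only way such a diagram can be the diagram of one of these three algebras is exactly in the excluded cases. This uses Lemma~\ref{lm14} and Corollary~\ref{cor1} to pin down the principal-root submatrix $B$ and rule out the appearance of a non-finite even Kac-Moody subdiagram, and it uses Theorem~\ref{lm333}/Corollary~\ref{cor3} to isolate $Q^{\pm}(m,n,t)$ as the genuinely new $3\times 3$ exception. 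The finite-type case is immediate (finite-type algebras are not in the "not of finite type" hypothesis), so no argument is needed there.
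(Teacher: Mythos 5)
Your overall strategy---use condition~(\ref{gammaproperty}) to reduce to the affine case (plus $S(1,2,\alpha)$ and $Q^{\pm}(m,n,t)$) and then quote Theorems~\ref{p1} and~\ref{p2}---is the intended one, and your non-twisted case is handled correctly, including the observation that $\mathfrak{osp}(1|2n)^{(1)}$ has no isotropic simple root and so falls outside the hypotheses. One caution on the first step: you should not invoke Theorem~\ref{rkmtheorem} here, since Corollary~\ref{lm123} is precisely the tool used to prove that classification is complete; the legitimate input is only the first-stage list of diagrams satisfying~(\ref{gammaproperty}), obtained by extending finite-type diagrams.

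The genuine problem is in your twisted case. The algebras $\mathfrak{sl}(1|2n)^{(2)}$ and $\mathfrak{sl}(1|2n+1)^{(4)}$ are \emph{not} ``$q(n)^{(2)}$ in disguise'': the paper states explicitly that $q(n)^{(2)}$ is non-symmetrizable and is the twisted affinization of the non-contragredient superalgebra $q(n)$, whereas $\mathfrak{sl}(1|2n)^{(2)}$ and $\mathfrak{sl}(1|2n+1)^{(4)}$ are symmetrizable twisted affinizations of $\mathfrak{sl}(1|m)$. Moreover, $q(n)^{(2)}$ does not violate~(\ref{gammaproperty})---its diagram is a cycle whose proper subdiagrams are finite-type chains of $\bigcirc$'s and $\bigotimes$'s---and it requires no special treatment anyway, since it is not among the three exceptions of Theorem~\ref{p2} and hence falls under the ``trivial only'' conclusion. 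What your argument must actually do, and does not, is account for the three genuine exceptions $\mathfrak{osp}(2|2n)^{(2)}$, $\mathfrak{sl}(1|2n)^{(2)}$ and $\mathfrak{sl}(1|2n+1)^{(4)}$: these admit non-trivial irreducible integrable highest weight modules, plausibly satisfy the hypotheses of the corollary, and are not named in its exclusion list. Closing the twisted case requires showing that each of them is, as a contragredient Lie superalgebra (i.e., up to choice of base in its Weyl groupoid), identified with one of the excluded algebras $\mathfrak{sl}(1|n)^{(1)}$ or $\mathfrak{osp}(2|2n)^{(1)}$, or else fails the hypotheses. Your bookkeeping replaces this necessary identification with a false one, so the proof does not close as written.
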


\end{document}